\documentclass{amsart}
\title{Equivariant Eilenberg-Watts theorem for module coalgebras}

\author[T. Shibata]{Taiki Shibata}
\address[T. Shibata]{Department of Applied Mathematics,
  Okayama University of Science \\
  1-1 Ridai-cho, Kita-ku Okayama-shi, Okayama 700-0005, Japan.}
\email{shibata@ous.ac.jp}
\author[K. Shimizu]{Kenichi Shimizu}
\address[K. Shimizu]{Department of Mathematical Sciences,
  Shibaura Institute of Technology \\
  307 Fukasaku, Minuma-ku, Saitama-shi, Saitama 337-8570, Japan.}
\email{kshimizu@shibaura-it.ac.jp}

\keywords{coalgebra, Hopf algebra, monoidal category, module category}
\subjclass[2020]{18M05, 16T05}

\date{}

\usepackage{amsmath,amssymb}
\usepackage[setpagesize=false]{hyperref}
\usepackage{tikz}
\usepackage{tikz-cd}
\usepackage{mathrsfs}
\usepackage{amsthm}
\numberwithin{equation}{section}
\theoremstyle{plain}
\newtheorem{C}{}[section] 
\newtheorem{lemma}[C]{Lemma}
\newtheorem{claim}[C]{Claim}
\newtheorem{theorem}[C]{Theorem}
\newtheorem{corollary}[C]{Corollary}

\theoremstyle{definition}
\newtheorem{definition}[C]{Definition}
\theoremstyle{remark}
\newtheorem{remark}[C]{Remark}
\newtheorem{example}[C]{Example}

\newcommand{\id}{\mathrm{id}}
\newcommand{\op}{\mathrm{op}}
\newcommand{\cop}{\mathrm{cop}}

\newcommand{\bfk}{\Bbbk}

\newcommand{\Hom}{\mathrm{Hom}}

\newcommand{\Ker}{\mathrm{Ker}}

\newcommand{\Img}{\mathrm{Im}}

\newcommand{\unitobj}{\mathbf{1}}
\newcommand{\rev}{\mathrm{rev}}
\newcommand{\eval}{\mathrm{ev}}
\newcommand{\coev}{\mathrm{coev}}

\newcommand{\copow}{\otimes}

\usepackage{upgreek}
\newcommand{\coact}{\updelta}
\newcommand{\coactl}{\coact^{\ell}}
\newcommand{\coactr}{\coact^{r}}
\newcommand{\coten}{\mathbin{\square}}

\newcommand{\Mod}{\mathfrak{M}}
\newcommand{\Vect}{\mathbf{Vec}}

\newcommand{\lax}{\mathrm{lax}}
\newcommand{\strong}{\mathrm{strong}}

\begin{document}

\begin{abstract}
  For coalgebras $C$ and $D$, Takeuchi proved that the category of linear functors from $\mathfrak{M}^C$ to $\mathfrak{M}^D$ preserving small coproducts is equivalent to the category of $C$-$D$-bicomodules, where $\mathfrak{M}^C$ for a coalgebra $C$ means the category of right $C$-comodules. We formulate and prove an equivariant version of this result for module coalgebras over a bialgebra. As an application, for a bialgebra $H$, we establish an equivalence of the 2-category of a particular class of module categories over the monoidal category $\mathfrak{M}^H$ and the 2-category of a particular class of module categories over the monoidal category ${}_H\mathfrak{M}$ of left $H$-modules.
\end{abstract}

\maketitle

\section{Introduction}

The Eilenberg-Watts equivalence states that an additive functor between categories of modules over rings has a right adjoint if and only if it is isomorphic to the functor given by tensoring a bimodule.
The starting point of this paper is an analogous result for coalgebras given by Takeuchi \cite{MR472967} as a part of his Morita theory for coalgebras, nowadays called Morita-Takeuchi theory.
To recall his result, we introduce some notation:
Given coalgebras $C$ and $D$ over a field $\bfk$, we denote by ${}^C\Mod$, $\Mod^D$ and ${}^C\Mod^D$ the category of left $C$-modules, right $D$-comodules and $C$-$D$-bicomodules, respectively.
Given linear categories $\mathcal{A}$ and $\mathcal{B}$, we denote by $\mathscr{L}(\mathcal{A}, \mathcal{B})$ the category of left exact linear functors from $\mathcal{A}$ to $\mathcal{B}$ that preserving coproducts existing in $\mathcal{A}$.
The aforementioned result of Takeuchi states that the functor
\begin{equation*}
  {}^C\Mod^D \to \mathscr{L}(\Mod^C, \Mod^D),
  \quad M \mapsto (-) \coten_C M
\end{equation*}
is an equivalence of linear categories, where $\coten_C$ means the cotensor product over the coalgebra $C$ (see Subsection~\ref{subsec:takeuchi-equiv}).

In this paper, we give an `equivariant' version of Takeuchi's equivalence, which may be thought of as a dual version of the equivariant Eilenberg-Watts theorem established by Andruskiewitsch and Mombelli \cite[Proposition 1.23]{MR2331768} (see also \cite[Appendix A]{NSS} for an exposition).
Let $H$ be a bialgebra, and let $C$ be a left $H$-module coalgebra. Then there is a functor $\otimes : \Mod^H \times \Mod^C \to \Mod^C$, which makes $\Mod^C$ a left module category over the monoidal category $\Mod^H$ (see Section~\ref{sec:equivariant-EW}).
Let $D$ also be a left $H$-module coalgebra.
A {\em lax left $\Mod^H$-module functor} from $\Mod^C$ to $\Mod^D$ is a functor $F: \Mod^C \to \Mod^D$ together with a natural transformation
\begin{equation*}
  \xi_{X,M} : X \otimes F(M) \to F(X \otimes M) \quad (X \in \Mod^H, M \in \Mod^C)
\end{equation*}
satisfying certain axioms (we note that the structure morphism $\xi$ is not required to be invertible).
The main result of this paper (Theorem~\ref{thm:equivariant-EW}) is that Takeuchi's equivalence induces a category equivalence ${}^C_H\Mod^D \approx \mathscr{L}_{\Mod^H}^{\lax}(\Mod^C, \Mod^D)$, where the source is the category of $C$-$D$-bicomodules in the monoidal category ${}_H\Mod$ of left $H$-modules and the target is the category of lax left $\Mod^H$-module functors from $\Mod^C$ to $\Mod^D$ whose underlying functor belongs to $\mathscr{L}(\Mod^C, \Mod^D)$.
In addition to proving this result, we also demonstrate several applications in later sections.

This paper is organized as follows: In Section~\ref{sec:preliminaries}, we discuss the canonical action of the category $\Vect$ of vector spaces on a linear category admitting small coproducts, and provide some relevant lemmas. We then recall Takeuchi's equivalence. We also recall Pareigis' Eilenerg-Watts theorem in a monoidal category (Theorem~\ref{thm:EW}) and point out that Takeuchi's equivalence is obtained as a tricky application of Pareigis' theorem.

The main result, Theorem~\ref{thm:equivariant-EW}, is proved in Section \ref{sec:equivariant-EW}.
Subsequently, in Section~\ref{sec:applications-Hopf-YD}, we apply the main result to Hopf and Yetter-Drinfeld modules. An important observation is that if $H$ is a Hopf algebra, then the inclusion functor
\begin{equation*}
  i_{C, D}:
  \mathscr{L}_{\Mod^H}^{\strong}(\Mod^C, \Mod^D)
  \to \mathscr{L}_{\Mod^H}^{\lax}(\Mod^C, \Mod^D)
\end{equation*}
is an equivalence for all left $H$-module coalgebras $C$ and $D$, where the target category is the full subcategory of strong module functors (Theorem~\ref{thm:equivalence-iota}). We explain how this theorem yield the fundamental theorem for Hopf modules and an equivalence for the category of Yetter-Drinfeld modules.

In Section~\ref{sec:applications-bicat}, we examine our results from a bicategorical perspective. Specifically, we establish a 2-equivalence between a suitably defined 2-category of left $\Mod^H$-module categories and a suitably defined 2-category of left ${}_H\Mod$-module categories (Theorem~\ref{thm:duality-1}). We also establish a similar result for left $\Mod^H$-module categories and right ${}_H\Mod$-module categories (Theorem \ref{thm:duality-2}). Finally, we formulate an equivariant version of Morita-Takeuchi equivalence and give several characterizations of this equivalence relation (Theorem~\ref{thm:H-Morita-Takeuchi}).

In Section~\ref{sec:applications-mod-full-subcat}, we discuss module subcategories of $\Mod^C$. It is known that the set of closed subcategories of $\Mod^C$ are in bijection with the set of subcoalgebras of $C$ \cite[Theorem 2.5.5]{MR1786197}. We establish an equivariant version of this result by giving a bijection between the set of $H$-module subcoalgebras of $C$ and the set of $\Mod^H$-module closed subcategories of $\Mod^C$.

\subsection*{Acknowledgements}

The second author (T.S.) is supported by JSPS KAKENHI Grant Number JP22K13905.
The third author (K.S.) is supported by JSPS KAKENHI Grant Number JP24K06676

\section{Preliminaries}
\label{sec:preliminaries}

\subsection{Notation}

Throughout this paper, we work over a field $\bfk$.
We denote by $\Vect$ the category of all vector spaces over the field $\bfk$.
Unless otherwise noted, a (co)algebra means a (co)associative and (co)unital (co)algebra over $\bfk$.
Given two algebras $A$ and $B$, we denote by ${}_A \Mod$, $\Mod_B$ and ${}_A \Mod_B$ the category of left $A$-modules, right $B$-modules and $A$-$B$-bimodules, respectively.
Similarly, given two coalgebras $C$ and $D$, we denote by ${}^C\Mod$, $\Mod^D$ and ${}^C\Mod^D$ the category of left $C$-comodules, right $D$-comodules and $C$-$D$-bicomodules, respectively.
The Hom functors of these categories are denoted by the same symbol but with $\Mod$ replaced with $\Hom$.
For example, the Hom functor of ${}^C\Mod$ is denoted by ${}^C\Hom$.

For the basic theory of coalgebras, we refer the reader to \cite{MR1786197}.
The comultiplication and the counit of a coalgebra $C$ are denoted by $\Delta_C$ and $\varepsilon_C$, respectively, but the subscript $C$ is often dropped when it is clear from the context. To express the comultiplication, we use Sweedler's notation, such as
\begin{equation*}
  \Delta(c) = c_{(1)} \otimes c_{(2)}, \quad
  \Delta(c_{(1)}) \otimes c_{(2)} = c_{(1)} \otimes c_{(2)} \otimes c_{(3)}
  = c_{(1)} \otimes \Delta(c_{(2)})
\end{equation*}
for $c \in C$. Given $M \in \Mod^C$ and $N \in {}^C\Mod$, we denote by $\coactr_M$ and $\coactl_N$ the coaction of $C$ on $M$ and $N$, respectively.
We also use Sweedler's notation, such as
\begin{equation*}
  \coactr_M(m) = m_{(0)} \otimes m_{(1)}
  \quad (m \in M)
  \quad \text{and} \quad
  \coactl_N(n) = n_{(-1)} \otimes n_{(0)}
  \quad (n \in N),
\end{equation*}
to express the coaction of $C$. The {\em contensor product} of $M$ and $N$ over $C$ \cite[Section 2.3]{MR1786197} is defined and denoted by
\begin{equation}
  \label{eq:def-cotensor-product}
  M \coten_C N := \Ker(\coactr_M \otimes \id_N - \id_M \otimes \coactl_N : M \otimes N \to M \otimes C \otimes N).
\end{equation}

\subsection{Modules over a monoidal category}

We refer the reader to \cite{MR3242743} for basics on monoidal categories and module categories over a monoidal category. All monoidal categories are assumed to be strict in view of Mac Lane's strictness theorem.
All module categories are also assumed to be strict in the light of \cite[Remark 7.2.4]{MR3242743}.
Unless otherwise noted, we denote by $\otimes$ and $\unitobj$ the monoidal product and the unit object of a monoidal category, respectively. The symbol $\otimes$ is also used to express the action of a monoidal category on a module category.

A module functor \cite[Definition 7.2.1]{MR3242743} is a functor between module categories over the same monoidal category, say $\mathcal{C}$, that preserves the action of $\mathcal{C}$ up to coherent natural isomorphism.
In this paper, we also need a relaxed notion: Let $\mathcal{M}$ and $\mathcal{N}$ be left $\mathcal{C}$-module categories. Then a {\em lax left $\mathcal{C}$-module functor} from $\mathcal{M}$ to $\mathcal{N}$ is a functor $F: \mathcal{M} \to \mathcal{N}$ equipped with a natural transformation
\begin{equation*}
  \xi_{X,M} : X \otimes F(M) \to F(X \otimes M)
  \quad (X \in \mathcal{C}, M \in \mathcal{M}),
\end{equation*}
which is not necessarily invertible, such that the equations
\begin{equation}
  \label{eq:def-lax-module-functor}
  \xi_{X \otimes Y, M}
  = \xi_{X, Y \otimes M} \circ (\id_X \otimes \xi_{Y,M})
  \quad \text{and} \quad
  \xi_{\unitobj, M} = \id_{F(M)}
\end{equation}
hold for all objects $M \in \mathcal{M}$ and $X, Y \in \mathcal{C}$.
Morphisms of lax right $\mathcal{C}$-module functors are defined in the same way as \cite[Definition 7.2.3]{MR3242743}.

A {\em strong $\mathcal{C}$-module functor} is a lax $\mathcal{C}$-module functor whose structure morphism is invertible.
In the case where $\mathcal{C}$ is a rigid monoidal category, every lax $\mathcal{C}$-module functor is strong \cite{MR3934626}.
However, we deal with non-rigid monoidal categories and cannot avoid considering lax module functors in this paper.

By an {\em equivalence} of left $\mathcal{C}$-module categories, we mean a lax left $\mathcal{C}$-module functor $F : \mathcal{M} \to \mathcal{N}$ for which there exists a lax left $\mathcal{C}$-module functor $G: \mathcal{N} \to \mathcal{M}$ such that both $G \circ F \cong \id_{\mathcal{M}}$ and $F \circ G \cong \id_{\mathcal{N}}$ as lax left $\mathcal{C}$-module functors. It is easy to see that an equivalence of left $\mathcal{C}$-module categories is the same thing as a strong left $\mathcal{C}$-module functor whose underlying functor is an equivalence of categories.

\subsection{Canonical $\Vect$-action}
\label{subsec:can-vec-action}

There is a close relationship between categories on which a monoidal category $\mathcal{C}$ acts and categories enriched by $\mathcal{C}$; see, {\it e.g.}, \cite{MR1466618}, where a more general setting that $\mathcal{C}$ is a bicategory is discussed. Here we give some remarks on the action of $\Vect$ on a linear category.

A linear category is nothing but a category enriched by $\Vect$.
For an object $M$ of a linear category $\mathcal{M}$ and a vector space $X$, we define the copower $X \copow M$ of $M$ by $X$ to be an object representing the linear functor
\begin{equation*}
  \Hom_{\bfk}(X, \Hom_{\mathcal{M}}(M, -)) : \mathcal{M} \to \Vect.
\end{equation*}
Let $\alpha$ be the dimension of $X$ over $\bfk$. Then there is an isomorphism
\begin{equation*}
  \Hom_{\bfk}(X, \Hom_{\mathcal{M}}(M, -))
  \cong \prod_{\alpha} \Hom_{\mathcal{M}}(M, -)
\end{equation*}
given by using a basis of $X$. Since the coproduct $M^{\oplus \alpha}$ is an object representing the right hand side, the copower $X \copow M$ exists if and only if $M^{\oplus \alpha}$ does. Moreover, if they exist, there is an isomorphism $X \copow M \cong M^{\oplus \alpha}$. This isomorphism is natural in $M$, however, depends on the choice of a basis of $X$.

Now we assume that $\mathcal{M}$ admits small coproducts so that the copower $X \copow M$ exists for all objects $X \in \Vect$ and $M \in \mathcal{M}$.
The assignment $(X, M) \mapsto X \copow M$ extends to a bilinear functor $\Vect \times \mathcal{M} \to \mathcal{M}$, which we call the {\em canonical action} of $\Vect$ as it makes $\mathcal{M}$ a module category over $\Vect$.

From now on, a linear category admitting small coproducts is viewed as a module category over $\Vect$ by the canonical action. By definition, the functor $(-) \copow M : \Vect \to \mathcal{M}$ is left adjoint to $\Hom_{\mathcal{M}}(M, -) : \mathcal{M} \to \Vect$. We denote by
\begin{gather*}
  \coev_{M,X} : X \to \Hom_{\mathcal{M}}(M, X \copow M)
  \quad (X \in \Vect), \\
  \eval_{M,M'} : \Hom_{\mathcal{M}}(M, M') \copow M \to M'
  \quad (M' \in \mathcal{M})
\end{gather*}
the unit and the counit of the adjunction, respectively. For an element $v$ of a vector space $V$, we denote by $\underline{v} : \bfk \to V$ the unique linear map sending $1_{\bfk}$ to $v$. Under the identification $\bfk \copow M = M$, we have
\begin{equation}
  \label{eq:canonical-Vect-ev-coev}
  \eval_{M, M'} \circ (\underline{f} \otimes \id_M) = f
  \quad \text{and} \quad
  \coev_{M,X}(x) = \underline{x} \otimes \id_M
\end{equation}
for all $f \in \Hom_{\mathcal{M}}(M, M')$ and $x \in X \in \Vect$.

\subsection{Canonical $\Vect$-module structure}
\label{subsec:can-vec-module-structure}

Let $\mathcal{M}$ and $\mathcal{N}$ be linear categories admitting small coproducts. For a linear functor $F: \mathcal{M} \to \mathcal{N}$, we define the morphism
\begin{equation}
  \label{eq:canonical-Vect}
  \widehat{F}_{X,M} : X \copow F(M) \to F(X \copow M)
  \quad (X \in \Vect, M \in \mathcal{M})
\end{equation}
to be the element corresponding to the linear map
\begin{equation}
  \label{eq:canonical-Vect-def-1}
  X \to \Hom_{\mathcal{N}}(F(M), F(X \copow M)),
  \quad x \mapsto F(\underline{x} \copow \id_M)
  \quad (x \in X)
\end{equation}
under the adjunction isomorphism
\begin{equation}
  \label{eq:canonical-Vect-def-2}
  \Hom_{\bfk}(X, \Hom_{\mathcal{N}}(F(M), F(X \copow M)))
  \cong \Hom_{\mathcal{N}}(X \copow F(M), F(X \copow M)).
\end{equation}
We call the family $\widehat{F} = \{ \widehat{F}_{X,M} \}$ of morphisms {\em the canonical $\Vect$-module structure} of $F$ as it makes $F$ a lax left $\Vect$-module functor. The construction $F \mapsto (F, \widehat{F})$ is functorial. Namely, if $\alpha : F \to G$ is a natural transformation between linear functors $F$ and $G$ from $\mathcal{M}$ to $\mathcal{N}$, then $\alpha : (F, \widehat{F}) \to (G, \widehat{G})$ is a morphism of lax $\Vect$-module functors.

\begin{example}
  Let $C$ be a coalgebra.
  The copower of $M \in \Mod^C$ by $X \in \Vect$ is the tensor product $X \otimes M$ over $\bfk$ equipped with the right $C$-coaction given by
  \begin{equation*}
    X \otimes M \to X \otimes M \otimes C,
    \quad x \otimes m \mapsto x \otimes m_{(0)} \otimes m_{(1)}
    \quad (m \in M, x \in X).
  \end{equation*}
  The canonical $\Vect$-module structure of $\Hom^C(M, -) : \Mod^C \to \Vect$ is given by
  \begin{equation*}
    X \otimes \Hom^C(M, W) \to \Hom^C(M, X \otimes W),
    \quad x \otimes f \mapsto (m \mapsto x \otimes f(m)).
  \end{equation*}
  As this example shows, the canonical lax $\Vect$-module structure of a linear functor is not necessarily invertible.
\end{example}

\begin{example}
  \label{ex:canonical-Vect-module-structure}
  For $X \in \Vect$, $M \in \Mod^C$ and $N \in {}^C\Mod$, one has
  \begin{equation*}
    (X \otimes M) \coten_C N = X \otimes (M \coten_C N)
  \end{equation*}
  as subspaces of $X \otimes M \otimes N$.
  The canonical $\Vect$-module structure of the cotensor functor $(-) \coten_C N$ is the identity natural transformation.
\end{example}

A linear functor is essentially the same thing as a lax $\Vect$-module functor with respect to the canonical action of $\Vect$. The following lemma is a mathematical formulation of this slogan:

\begin{lemma}
  \label{lem:Vec-module-vs-Vec-enriched}
  Let $\mathcal{M}$ and $\mathcal{N}$ be linear categories admitting small coproducts. Then the category of linear functors from $\mathcal{M}$ to $\mathcal{N}$ is isomorphic to the category of lax $\Vect$-module functors from $\mathcal{M}$ to $\mathcal{N}$.
\end{lemma}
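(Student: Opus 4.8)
The plan is to exhibit mutually inverse functors between the two categories. In one direction, the construction $F \mapsto (F, \widehat{F})$ recalled in Subsection~\ref{subsec:can-vec-module-structure} already supplies a functor $\Phi$ from the category of linear functors to the category of lax $\Vect$-module functors: it was observed there that $\Phi$ sends a natural transformation $\alpha : F \to G$ of linear functors to a morphism $\alpha : (F, \widehat{F}) \to (G, \widehat{G})$ of lax $\Vect$-module functors. In the other direction, I would take the forgetful functor $\Psi$ sending a lax $\Vect$-module functor $(F, \xi)$ to its underlying linear functor $F$ and acting as the identity on morphisms; this is well defined because, by definition, a morphism of lax module functors is in particular a natural transformation of the underlying functors. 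By construction $\Psi \circ \Phi$ is the identity, so it remains only to prove $\Phi \circ \Psi = \id$.

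This last equality amounts to the uniqueness assertion that the \emph{only} lax $\Vect$-module structure carried by a given linear functor $F$ is the canonical one, i.e.\ that $\xi = \widehat{F}$ for every lax $\Vect$-module functor $(F, \xi)$; this is the heart of the proof. Fix $X \in \Vect$ and $M \in \mathcal{M}$. For $x \in X$, naturality of $\xi$ in the first variable applied to the morphism $\underline{x} : \bfk \to X$ yields
\begin{equation*}
  \xi_{X, M} \circ (\underline{x} \copow \id_{F(M)})
  = F(\underline{x} \copow \id_M) \circ \xi_{\bfk, M},
\end{equation*}
where I use the identifications $\bfk \copow F(M) = F(M)$ and $\bfk \copow M = M$. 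By the unit axiom in \eqref{eq:def-lax-module-functor} we have $\xi_{\bfk, M} = \id_{F(M)}$, so this simplifies to $\xi_{X,M} \circ (\underline{x} \copow \id_{F(M)}) = F(\underline{x} \copow \id_M)$ for all $x \in X$.

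On the other hand, unwinding the adjunction \eqref{eq:canonical-Vect-def-2} that defines $\widehat{F}$ in \eqref{eq:canonical-Vect} through the triangle identities and \eqref{eq:canonical-Vect-ev-coev}, one checks that the canonical structure morphism is characterized by exactly the same relation $\widehat{F}_{X,M} \circ (\underline{x} \copow \id_{F(M)}) = F(\underline{x} \copow \id_M)$ for all $x \in X$. Since a morphism out of the copower $X \copow F(M)$ is uniquely determined by its precompositions with the maps $\underline{x} \copow \id_{F(M)}$ as $x$ ranges over $X$ --- this is precisely the bijectivity of the adjunction isomorphism \eqref{eq:canonical-Vect-def-2} --- I conclude $\xi_{X,M} = \widehat{F}_{X,M}$, hence $\xi = \widehat{F}$. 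As $\Phi$ and $\Psi$ both act as the identity on morphisms, they are mutually inverse isomorphisms of categories.

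I expect the only delicate point to be the final step: making precise that an arbitrary lax structure is pinned down by its values on the ``rank-one'' inclusions $\underline{x} \copow \id_{F(M)}$, which is exactly where the universal property of the copower does the work. I note that the associativity axiom in \eqref{eq:def-lax-module-functor} plays no role in the uniqueness argument; naturality together with unitality already forces $\xi = \widehat{F}$.
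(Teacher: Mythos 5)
Your uniqueness argument --- that naturality of $\xi$ in the first variable applied to $\underline{x} : \bfk \to X$, combined with the unit axiom $\xi_{\bfk,M} = \id_{F(M)}$, pins down $\xi_{X,M}$ on the morphisms $\underline{x} \copow \id_{F(M)}$ and hence forces $\xi = \widehat{F}$ via the adjunction \eqref{eq:canonical-Vect-def-2} --- is correct and is exactly the second half of the paper's proof, including your observation that the associativity axiom is not needed.

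However, there is a genuine gap in the first half. You define $\Psi$ as ``the forgetful functor sending a lax $\Vect$-module functor $(F,\xi)$ to its underlying \emph{linear} functor $F$,'' but by the paper's definition a lax $\Vect$-module functor is merely a functor $F$ equipped with a natural transformation $\xi$ satisfying \eqref{eq:def-lax-module-functor}; nothing in the definition says that $F$ acts linearly on Hom-spaces. So $\Psi$ is not well defined on objects as stated, and proving that the lax structure \emph{forces} linearity of $F$ is half the content of the lemma (it is what makes the slogan ``a linear functor is the same thing as a lax $\Vect$-module functor'' nontrivial). The paper closes this gap by setting $H = \Hom_{\mathcal{M}}(M,M')$, forming $F^{\sharp}_{M,M'} = F(\eval_{M,M'}) \circ \xi_{H,M}$, and checking via \eqref{eq:canonical-Vect-ev-coev} that the linear map $H \to \Hom_{\mathcal{N}}(F(M),F(M'))$ corresponding to $F^{\sharp}_{M,M'}$ under the adjunction coincides with the action of $F$ on morphisms. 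Your computation $\xi_{X,M} \circ (\underline{f} \copow \id_{F(M)}) = F(\underline{f} \copow \id_M)$ with $X = H$ is in fact most of what is needed --- composing both sides with $F(\eval_{M,M'})$ and noting that the left-hand side is linear in $f$ yields linearity of $F(-) : H \to \Hom_{\mathcal{N}}(F(M),F(M'))$ --- but this step must be made explicit before $\Psi$ (and hence $\Phi \circ \Psi = \id$) makes sense.
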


This lemma is a very special case of \cite[Theorem 3.7]{MR1466618}, where they have established an equivalence between a certain class of module categories over a bicategory $\mathfrak{W}$ and a certain class of categories enriched by $\mathfrak{W}$.
For reader's convenience, we provide a direct proof.

\begin{proof}
  We have already established a fully faithful functor $F \mapsto (F, \widehat{F})$ from the category of linear functors from $\mathcal{M}$ to $\mathcal{N}$, to the category of lax $\Vect$-module functors from $\mathcal{M}$ to $\mathcal{N}$. The functor $F \mapsto (F, \widehat{F})$ is clearly injective on objects.
  We shall verify the surjectivity on objects. Let $(F, \xi)$ be an lax $\Vect$-module functor from $\mathcal{M}$ to $\mathcal{N}$.
  We first show that $F$ is in fact a linear functor.
  To see this, we fix two objects $M, M' \in \mathcal{M}$, write $H = \Hom_{\mathcal{M}}(M, M')$ for short, and define
  \begin{equation*}
    F_{M,M'} \in \Hom_{\bfk}(H, \Hom_{\mathcal{N}}(F(M), F(M')))
  \end{equation*}
  to be the element corresponding to
  \begin{equation*}
    F_{M, M'}^{\sharp} := F(\eval_{M, M'}) \circ \xi_{H,M} \in \Hom_{\mathcal{N}}(H \copow F(M), F(M')).
  \end{equation*}
  Explicitly, $F_{M,M'} = \Hom_{\bfk}(F(M), F_{M,M'}^{\sharp}) \circ \coev_{H, F(M)}$.
  For all $f \in H$, we have
  \begin{gather*}
    F_{M,M'}(f)
    = F^{\sharp}_{M,M'} \circ \coev_{M,H}(f)
    \mathop{=}^{\eqref{eq:canonical-Vect-ev-coev}}
    F(\eval_{M,M'}) \circ \xi_{H,M} \circ (\underline{f} \copow \id_{F(M)}) \\
    = F(\eval_{M,M'}) \circ F(\underline{f} \copow \id_M) \circ \xi_{\bfk, M}
    \mathop{=}^{\eqref{eq:canonical-Vect-ev-coev}}
    F(f).
  \end{gather*}
  Namely, the map $\Hom_{\mathcal{M}}(M, M') \to \Hom_{\mathcal{M}}(F(M), F(M'))$ induced by $F$ is identical to the linear map $F_{M,M'}$. Therefore $F$ is linear.

  It remains to show $\xi = \widehat{F}$. For $x \in X \in \Vect$ and $M \in \mathcal{M}$, we have
  \begin{gather*}
    \xi_{X,M} \circ (\underline{x} \otimes \id_{F(M)})
    = F(\underline{x} \otimes \id_M) \circ \xi_{\bfk, M}
    = F(\underline{x} \copow \id_M).
  \end{gather*}
  Thus, by~\eqref{eq:canonical-Vect-ev-coev}, we have $\xi_{X,M} \circ \coev_{F(M), X}(x) = F(\coev_{M,X}(x))$ for all $x \in X$.
  Under the canonical isomorphism~\eqref{eq:canonical-Vect-def-2}, the left and the right hand side of this equation correspond to $\xi_{X,M}$ and $\widehat{F}_{X,M}$, respectively. Hence $\xi_{X,M} = \widehat{F}_{X,M}$. The proof is done.
\end{proof}

We discuss when the canonical lax $\Vect$-module structure is invertible.
For this purpose, we give the following alternative description of the canonical $\Vect$-module structure:
Let $\mathcal{M}$ and $\mathcal{N}$ be linear categories admitting small coproducts, and let $F: \mathcal{M} \to \mathcal{N}$ be a linear functor. For an object $M \in \mathcal{M}$ and a vector space $X$ of dimension $\alpha$, there is a morphism
\begin{equation}
  \label{eq:canonical-Vect-2}
  \begin{tikzcd}
    X \copow F(M) \ar[r, "{\cong}"]
    & F(M)^{\oplus \alpha} \ar[r]
    & F(M^{\oplus \alpha}) \ar[r, "{\cong}"]
    & F(X \copow M),
  \end{tikzcd}
\end{equation}
where the first and the third arrows are isomorphisms given by a basis of $X$ and the second arrow is the canonical morphism obtained by the universal property of the coproduct.

\begin{lemma}
  \label{lem:canonical-Vect-2}
  The morphism \eqref{eq:canonical-Vect-2} is equal to $\widehat{F}_{X,M}$.
\end{lemma}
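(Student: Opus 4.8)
The plan is to check that the two morphisms agree after precomposition with the coproduct injections of the source $X \copow F(M) \cong F(M)^{\oplus \alpha}$; since a morphism out of a coproduct is determined by these precompositions, this suffices. Fix a basis $\{e_i\}_{i \in I}$ of $X$, so that $\alpha = |I|$, and take this same basis to define the two isomorphisms occurring in \eqref{eq:canonical-Vect-2}. By \eqref{eq:canonical-Vect-ev-coev}, the morphism $\lambda_i := \coev_{F(M), X}(e_i) = \underline{e_i} \copow \id_{F(M)} : F(M) \to X \copow F(M)$ is the $i$-th injection realizing $X \copow F(M)$ as $F(M)^{\oplus \alpha}$; likewise $\iota_i := \coev_{M,X}(e_i) = \underline{e_i} \copow \id_M : M \to X \copow M$ is the $i$-th injection realizing $X \copow M$ as $M^{\oplus \alpha}$.

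First I would record the behaviour of $\widehat{F}_{X,M}$ on these injections. Directly from its definition via the adjunction isomorphism \eqref{eq:canonical-Vect-def-2} (the same computation already carried out in the proof of Lemma~\ref{lem:Vec-module-vs-Vec-enriched}), one has
\begin{equation*}
  \widehat{F}_{X,M} \circ \coev_{F(M), X}(x) = F(\underline{x} \copow \id_M)
  \quad (x \in X),
\end{equation*}
so that in particular $\widehat{F}_{X,M} \circ \lambda_i = F(\iota_i)$ for every $i \in I$.

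Next I would compute the composite \eqref{eq:canonical-Vect-2}, call it $\Phi$, on the same injections. Precomposing with $\lambda_i$, the first arrow carries $\lambda_i$ to the $i$-th injection $\mu_i : F(M) \to F(M)^{\oplus \alpha}$ of the coproduct; the second (canonical) arrow sends $\mu_i$ to $F(\kappa_i)$, where $\kappa_i : M \to M^{\oplus \alpha}$ is the $i$-th coproduct injection, by the defining property of the morphism induced out of $F(M)^{\oplus \alpha}$; and the third arrow, being $F$ applied to the isomorphism $M^{\oplus \alpha} \xrightarrow{\cong} X \copow M$ that carries $\kappa_i$ to $\iota_i$, sends $F(\kappa_i)$ to $F(\iota_i)$. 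Hence $\Phi \circ \lambda_i = F(\iota_i) = \widehat{F}_{X,M} \circ \lambda_i$ for all $i \in I$, and the universal property of the coproduct yields $\Phi = \widehat{F}_{X,M}$.

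The argument is essentially a diagram chase, and the only point requiring care is the bookkeeping: one must ensure that the two isomorphisms in \eqref{eq:canonical-Vect-2} are induced by one and the same basis $\{e_i\}$, so that the four families of injections $\lambda_i$, $\mu_i$, $\kappa_i$, $\iota_i$ are matched consistently under the identifications $\bfk \copow (-) = (-)$ and $\coev_{M,X}(e_i) = \underline{e_i} \copow \id_M$. Once this matching is fixed, no genuine obstacle remains.
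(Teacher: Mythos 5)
Your proof is correct and follows essentially the same route as the paper's: both arguments identify the $i$-th coproduct injection with $\underline{x_i}\copow\id$ under the basis-induced isomorphism, characterize the composite \eqref{eq:canonical-Vect-2} by its precomposition with these injections, and match this against the adjunction characterization of $\widehat{F}_{X,M}$ via \eqref{eq:canonical-Vect-ev-coev}. The only cosmetic difference is that the paper passes from the family of equations indexed by $i$ to the single equation $f_{X,M}\circ\coev_{F(M),X}(x)=F(\coev_{M,X}(x))$ for all $x\in X$ by linearity and then invokes the adjunction isomorphism \eqref{eq:canonical-Vect-def-2}, whereas you conclude by the universal property of the coproduct; these are interchangeable.
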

\begin{proof}
  We let $f_{X,M}$ be the morphism \eqref{eq:canonical-Vect-2}.
  Let $\{ x_{i} \}_{i \in I}$ be the basis of $X$ used in \eqref{eq:canonical-Vect-2}, where $I$ is a set of cardinality $\alpha$. Let, in general, $L$ be an object of a linear category admitting small coproducts. If we identify the copower $X \copow L$ with $L^{\oplus I}$ by using the basis $\{ x_i \}_{i \in I}$, the $i$-th inclusion $L \to L^{\oplus I}$ is identified with $\underline{x_i} \copow \id_L : L \to X \copow L$. Hence $f_{X,M}$ is characterized by the following equation:
  \begin{equation*}
    f_{X, M} \circ (\underline{x_i} \copow \id_{F(M)})
    = F(\underline{x_i} \otimes \id_M)
    \quad (i \in I).
  \end{equation*}
  By~\eqref{eq:canonical-Vect-ev-coev} and linearity, this equation is equivalent to
  \begin{equation*}
    f_{X, M} \circ \coev_{F(M),X}(x)
    = F(\coev_{M,X}(x))
    \quad (x \in X).
  \end{equation*}  
  This means that $f_{X,M}$ corresponds to the linear map \eqref{eq:canonical-Vect-def-1} under the canonical isomorphism \eqref{eq:canonical-Vect-def-2}. Thus we have $f_{X,M} = \widehat{F}_{X,M}$. The proof is done.
\end{proof}

By the above lemma, $\widehat{F}$ is invertible if and only if $F$ preserves all coproducts of the form $M^{\oplus \alpha}$ for some object $M \in \mathcal{M}$ and cardinal $\alpha$. The following fact, used by Takeuchi in \cite[Proposition 2.1]{MR472967}, is now obvious:

\begin{lemma}
  \label{lem:coproduct-preserving-implies-strong-Vec}
  Let $F: \mathcal{M} \to \mathcal{N}$ be a linear functor between linear categories $\mathcal{M}$ and $\mathcal{N}$ admitting small coproducts. If $F$ preserves small coproducts, then the natural transformation $\widehat{F}$ is invertible.
\end{lemma}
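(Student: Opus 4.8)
The plan is to deduce the statement directly from Lemma~\ref{lem:canonical-Vect-2}, which identifies $\widehat{F}_{X,M}$ with the composite morphism~\eqref{eq:canonical-Vect-2}. First I would fix an object $M \in \mathcal{M}$ and a vector space $X$ of dimension $\alpha$, and recall that~\eqref{eq:canonical-Vect-2} factors as
\[
  X \copow F(M) \xrightarrow{\ \cong\ } F(M)^{\oplus \alpha}
  \xrightarrow{\ \theta\ } F(M^{\oplus \alpha})
  \xrightarrow{\ \cong\ } F(X \copow M),
\]
where $\theta$ denotes the canonical comparison morphism supplied by the universal property of the coproduct $M^{\oplus \alpha}$.

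Next I would observe that the outer two arrows are isomorphisms by their very construction from a choice of basis of $X$, so $\widehat{F}_{X,M}$ is invertible if and only if $\theta$ is. Now $\theta$ is precisely the morphism induced by the images $F(\iota_i)$ of the coproduct inclusions $\iota_i : M \to M^{\oplus \alpha}$; by definition it is an isomorphism exactly when $F$ carries the coproduct diagram of $M^{\oplus \alpha}$ to a coproduct diagram in $\mathcal{N}$, that is, exactly when $F$ preserves the coproduct $M^{\oplus \alpha}$. By hypothesis $F$ preserves all small coproducts, and $\alpha = \dim_{\bfk} X$ ranges over arbitrary small cardinals as $X$ ranges over $\Vect$, so $\theta$ is invertible for every $X$ and $M$. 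Hence $\widehat{F}_{X,M}$ is invertible for all $X \in \Vect$ and $M \in \mathcal{M}$, which is the assertion.

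The main (and essentially only) point requiring care is the bookkeeping identification in the previous paragraph: matching the abstract condition that \emph{$F$ preserves the coproduct $M^{\oplus \alpha}$} with the invertibility of the specific comparison arrow $\theta$ appearing in~\eqref{eq:canonical-Vect-2}. Once Lemma~\ref{lem:canonical-Vect-2} is available this identification is immediate, and there is no genuine obstacle remaining, since all the substantive work has already been carried out in establishing that lemma. I would therefore keep the argument to the two short steps above, exactly as the phrase ``is now obvious'' preceding the statement suggests.
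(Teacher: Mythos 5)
Your argument is correct and is exactly the paper's intended proof: the authors state immediately after Lemma~\ref{lem:canonical-Vect-2} that $\widehat{F}$ is invertible if and only if $F$ preserves all coproducts of the form $M^{\oplus\alpha}$, and then declare the lemma ``now obvious.'' Your two-step reduction via the comparison morphism $\theta$ is precisely that observation spelled out, so there is nothing to add.
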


It is interesting to know whether the converse of this lemma holds. We do not know a general answer to this question, however, we can give an affirmative answer under some conditions on $\mathcal{M}$, $\mathcal{N}$ and $F$ as follows:

\begin{lemma}
  \label{lem:coproduct-preserving-implies-strong-Vec-2}
  Let $F : \mathcal{M} \to \mathcal{N}$ be as in Lemma \ref{lem:coproduct-preserving-implies-strong-Vec}, and assume that $\widehat{F}$ is invertible. Then $F$ preserves small coproducts if one of the following conditions hold.
  \begin{enumerate}
  \item Both $\mathcal{M}$ and $\mathcal{N}$ are abelian categories, and $F$ is right exact.
  \item Both $\mathcal{M}$ and $\mathcal{N}$ are Ab4 abelian categories, and $F$ is left exact.
  \end{enumerate}
\end{lemma}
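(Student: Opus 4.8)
The plan is to exhibit an arbitrary small coproduct as a \emph{(co)kernel of an idempotent endomorphism of a copower}, so that the hypothesis on $\widehat{F}$ can be brought to bear through Lemma~\ref{lem:canonical-Vect-2} (which identifies invertibility of $\widehat{F}$ with preservation of copowers). Fix a family $\{M_i\}_{i \in I}$ in $\mathcal{M}$ and set $M = \bigoplus_{i \in I} M_i$ with coproduct inclusions $\iota_i : M_i \to M$. The universal property of the coproduct supplies projections $\pi_i : M \to M_i$ with $\pi_i \iota_j = \delta_{ij} \id_{M_i}$, hence idempotents $e_i := \iota_i \pi_i \in \End_{\mathcal{M}}(M)$ whose images are the summands $M_i$. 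First I would assemble these into the block-diagonal idempotent $\epsilon := \bigoplus_{i \in I} e_i$ (the coproduct of the endomorphisms $e_i$) on the copower $M^{\oplus I}$, so that $\id - \epsilon = \bigoplus_{i \in I}(\id - e_i)$.

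The elementary per-summand computation is that, for an idempotent $e_i$, one has $\Coker(\id - e_i) \cong \Img(e_i) \cong M_i \cong \Ker(\id - e_i)$, and likewise $\Coker(\id - F(e_i)) \cong F(M_i) \cong \Ker(\id - F(e_i))$, the latter because $F(\pi_i) F(\iota_i) = \id_{F(M_i)}$ forces $\Img(F(e_i)) \cong F(M_i)$. Now the assumption that $\widehat{F}$ is invertible, read through Lemma~\ref{lem:canonical-Vect-2}, means precisely that the canonical map $F(M)^{\oplus I} \to F(M^{\oplus I})$ is an isomorphism; under this identification $F(\id - \epsilon)$ becomes $\id - \epsilon'$, where $\epsilon' := \bigoplus_{i} F(e_i)$ is the block-diagonal idempotent on $F(M)^{\oplus I}$.

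For case (1), I would use that cokernels commute with coproducts in any cocomplete category (a colimits-commute-with-colimits fact, requiring no Ab4): thus $\Coker(\id - \epsilon) = \bigoplus_i \Coker(\id - e_i) = \bigoplus_i M_i = M$, and applying the right exact functor $F$ together with the displayed identification yields $F(M) \cong \Coker(\id - \epsilon') = \bigoplus_i \Coker(\id - F(e_i)) = \bigoplus_i F(M_i)$. For case (2), I would run the dual argument with kernels: $M \cong \Ker(\id - \epsilon)$, and after applying the left exact $F$, $F(M) \cong \Ker(\id - \epsilon')$. The step where Ab4 is indispensable is exactly the commutation $\Ker(\bigoplus_i g_i) = \bigoplus_i \Ker(g_i)$, valid because Ab4 says the coproduct functor is exact and hence preserves kernels; this is invoked in $\mathcal{M}$ to identify $\Ker(\id - \epsilon)$ with $M = \bigoplus_i M_i$ and in $\mathcal{N}$ to identify $\Ker(\id - \epsilon')$ with $\bigoplus_i F(M_i)$, which is why case (2), unlike case (1), requires both categories to be Ab4.

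The main obstacle I anticipate is bookkeeping rather than conceptual: I must verify that the isomorphism $\bigoplus_i F(M_i) \to F(M)$ produced by the above chain is \emph{the} canonical comparison morphism, namely the one restricting to $F(\iota_i)$ on each summand, so that the conclusion is genuinely that $F$ preserves the coproduct. This reduces to tracing the natural isomorphisms identifying $F$ of a (co)kernel with the (co)kernel of $F$, combined with the naturality of the copower comparison $\widehat{F}$, while keeping track of how $\iota_i$ and $\pi_i$ interact with $\epsilon$. The entire distinction between the two hypotheses is localized in the kernel-versus-cokernel commutation with coproducts noted above.
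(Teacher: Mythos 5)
Your proposal is correct and is essentially the paper's own argument: your presentation $M_i \cong \Coker(\id_M - \iota_i\pi_i)$ (resp.\ $M_i \cong \Ker(\id_M - \iota_i\pi_i)$) is exactly the paper's exact sequence $M \to M \to M_i \to 0$ (resp.\ $0 \to M_i \to M \to M$), and the rest proceeds identically via Lemma~\ref{lem:canonical-Vect-2} and the commutation of cokernels (resp., under Ab4, kernels) with coproducts. The only difference is presentational: the paper arranges the comparison as a commutative diagram whose third vertical arrow is \emph{by construction} the canonical morphism $\bigoplus_i F(M_i) \to F(\bigoplus_i M_i)$, which disposes automatically of the ``is the resulting isomorphism the canonical one?'' bookkeeping that you defer at the end.
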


We recall that an abelian category $\mathcal{A}$ satisfies Ab4 if $\mathcal{A}$ admits small coproducts and coproducts are exact in $\mathcal{A}$.

\begin{proof}
  We assume (1).
  Let $\{ M_i \}_{i \in I}$ be a family of objects of $\mathcal{M}$, and set $M = \bigoplus_{i \in I} M_i$.
  For each element $i \in I$, we consider the exact seqeunce $M \to M \to M_i \to 0$ in $\mathcal{M}$, where the first and the second arrow are the projection to $\bigoplus_{j \in I \setminus \{ i \}} M_j$ and $M_i$, respectively. Then we have the commutative diagram
  \begin{equation*}
    \begin{tikzcd}
      F(M)^{\oplus I} \arrow[r] \arrow[d, "\xi"]
      & F(M)^{\oplus I} \arrow[r] \arrow[d, "\xi"]
      & \bigoplus_{i \in I} F(M_i) \arrow[r] \arrow[d, "\xi'"]
      & 0 \\
      F(M^{\oplus I}) \arrow[r]
      & F(M^{\oplus I}) \arrow[r]
      & F(\bigoplus_{i \in I} M_i) \arrow[r]
      & 0,
    \end{tikzcd}
  \end{equation*}
  where $\xi$ and $\xi'$ are canonical morphisms obtained by the universal property of the coproducts.
  Since $F$ and the coproduct are right exact, the rows of the diagram are exact.
  By Lemma~\ref{lem:canonical-Vect-2} and the assumption that $\widehat{F}$ is an isomorphism, $\xi$ is an isomorphism. 
  Thus $\xi'$ is also an isomorphism. Namely, $F$ preserves coproducts.

  In the case of (2), we consider an exact sequence of the form $0 \to M_i \to M \to M$ instead of the exact sequence $M \to M \to M_i \to 0$. By the left exactness of $F$ and the Ab4 assumption, one can prove that $F$ preserves coproducts in the case of (2) in a similar way as the case of (1).
\end{proof}

\subsection{Takeuchi's equivalence}
\label{subsec:takeuchi-equiv}

We introduce the following terminology and notation as it is the class of functors of central interest in this paper:

\begin{definition}
  Given cocomplete abelian linear categories $\mathcal{A}$ and $\mathcal{B}$, we denote by $\mathscr{L}(\mathcal{A}, \mathcal{B})$ the category of left exact linear functors from $\mathcal{A}$ to $\mathcal{B}$ preserving coproducts. A functor $\mathcal{A} \to \mathcal{B}$ is said to be {\em of type $\mathscr{L}$} if it belongs to $\mathscr{L}(\mathcal{A}, \mathcal{B})$.
\end{definition}

As we have mentioned in Introduction, the following theorem holds:

\begin{theorem}[Takeuchi {\cite{MR472967}}]
  \label{thm:EW-coalgebra}
  For coalgebras $C$ and $D$, the functor
  \begin{equation*}
    \Phi_{C,D} : {}^C\Mod^D \to \mathscr{L}(\Mod^C, \Mod^D),
    \quad M \mapsto (-) \coten_C M
  \end{equation*}
  is an equivalence of linear categories.
\end{theorem}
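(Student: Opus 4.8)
The plan is to prove Takeuchi's equivalence by realizing $\Phi_{C,D}$ as a functor that is essentially inverse to the functor sending a coproduct-preserving left exact functor $F$ to the $C$-$D$-bicomodule obtained by evaluating $F$ at a canonical generator. Concretely, I would first define a candidate quasi-inverse $\Psi_{C,D} : \mathscr{L}(\Mod^C, \Mod^D) \to {}^C\Mod^D$. The coalgebra $C$, regarded as a right $C$-comodule via $\Delta_C$, carries a commuting left $C$-comodule structure (again via $\Delta_C$), making $C$ an object of ${}^C\Mod^C$. For $F \in \mathscr{L}(\Mod^C, \Mod^D)$, the object $F(C)$ then inherits a right $D$-comodule structure from $F$ and a left $C$-comodule structure transported from the left coaction on $C$; I would set $\Psi_{C,D}(F) := F(C)$ with this $C$-$D$-bicomodule structure and check functoriality in $F$.

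The key step is to produce natural isomorphisms $\Phi_{C,D} \circ \Psi_{C,D} \cong \id$ and $\Psi_{C,D} \circ \Phi_{C,D} \cong \id$. For the latter, I would use the canonical isomorphism $M \cong C \coten_C M$ valid for any left $C$-comodule $M$ (the counit axiom identifies $C \coten_C M$ with the set of Sweedler elements annihilated by $\delta^\ell_M - \Delta \otimes \id$, which is $M$); applying this with $M \in {}^C\Mod^D$ shows $\Psi_{C,D}(\Phi_{C,D}(M)) = C \coten_C M \cong M$ as bicomodules. For the former direction, given $F$ of type $\mathscr{L}$, I must construct a natural isomorphism $(-)\coten_C F(C) \cong F$. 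The idea is that every right $C$-comodule $N$ fits into a presentation, built from free comodules of the form $X \coten_C C = X \otimes C$ (for $X \in \Vect$), and that both functors agree on such objects: indeed $(X \otimes C) \coten_C F(C) = X \otimes (C \coten_C F(C)) \cong X \otimes F(C) \cong F(X \otimes C)$, where the last isomorphism is exactly the invertibility of the canonical $\Vect$-module structure $\widehat{F}$ supplied by Lemma~\ref{lem:coproduct-preserving-implies-strong-Vec}. One then extends the comparison from free objects to all of $\Mod^C$ by exactness.

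The technical engine for the extension is the fundamental fact (a form of the fundamental theorem of comodules) that every right $C$-comodule $N$ admits an exact sequence
\begin{equation*}
  0 \to N \xrightarrow{\coactr_N} N \otimes C \to N \otimes C \otimes C,
\end{equation*}
exhibiting $N$ as a kernel, hence as a left-exact limit, of a diagram of free comodules $N \otimes C = N \copow C$ and $N \otimes C \otimes C$. Since both $(-) \coten_C F(C)$ and $F$ are left exact and preserve coproducts, and since $N \otimes C$ and $N \otimes C \otimes C$ are coproducts of copies of $C$, the agreement of the two functors on free objects together with naturality of $\widehat{F}$ forces an isomorphism on $N$ by the five lemma applied to the induced map of equalizer diagrams. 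I expect the main obstacle to lie precisely here: checking that the comparison morphism is \emph{natural} in $N$ and compatible with the left $C$-coactions, so that it lands in $\mathscr{L}(\Mod^C,\Mod^D)$ as a morphism of functors rather than merely an objectwise isomorphism. Tracking the two commuting comodule structures on $C$ through $F$ (distinguishing the variable used for the cotensor from the variable carrying the $D$-coaction) is the delicate bookkeeping, and verifying full faithfulness of $\Phi_{C,D}$ reduces, via the isomorphism $\Phi_{C,D}(M)(C) = C \coten_C M \cong M$, to the Yoneda-type observation that a morphism of functors of type $\mathscr{L}$ is determined by its value at $C$.
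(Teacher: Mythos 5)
Your proposal is correct and follows essentially the same route the paper takes: the quasi-inverse $F \mapsto F(C)$ with left $C$-coaction $\widehat{F}_{C,C}^{-1} \circ F(\Delta_C)$ is precisely the construction the paper records from Takeuchi's \cite[Proposition 2.1]{MR472967}, resting on the invertibility of the canonical $\Vect$-module structure (Lemma~\ref{lem:coproduct-preserving-implies-strong-Vec}) and, implicitly, on the cofree presentation $0 \to N \to N \otimes C \to N \otimes C \otimes C$ that you spell out. The paper only sketches this and additionally notes that the theorem can be deduced from Pareigis' monoidal Eilenberg--Watts theorem (Corollary~\ref{cor:EW-Pareigis} with $\mathcal{T}=\Vect$), an alternative packaging you do not use but which amounts to the same argument.
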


Lemma~\ref{lem:coproduct-preserving-implies-strong-Vec} is essential for the construction of the quasi-inverse of $\Phi_{C,D}$. Indeed, it is constructed in the proof of \cite[Proposition 2.1]{MR472967} as follows: Let $F$ be an object of $\mathscr{L}(\Mod^C, \Mod^D)$. Lemma~\ref{lem:coproduct-preserving-implies-strong-Vec} says that the canonical $\Vect$-module structure $\widehat{F}$ is invertible.
A quasi-inverse of $\Phi_{C,D}$ is given by $\Phi_{C,D}^{-1}(F) = F(C)$, where the right $D$-comodule $F(C)$ is made into a $C$-$D$-bicomodule by the left $C$-coaction
\begin{equation*}
  F(C) \xrightarrow{\quad F(\Delta_C) \quad}
  F(C \copow C)
  \xrightarrow{\quad \widehat{F}_{C,C}^{-1} \quad}
  C \copow F(C).
\end{equation*}

\subsection{Eilenberg-Watts equivalence}

By an {\em abelian monoidal category}, we mean an abelian category endowed with a structure of a monoidal category such that the tensor product is additive in each variable.
Let $\mathcal{C}$ be an abelian monoidal category whose tensor product functor is right exact in each variable. Given algebras $A$ and $B$ in $\mathcal{C}$ ($=$ monoids in $\mathcal{C}$ \cite{MR0498792,MR1712872}), we denote by ${}_A\mathcal{C}$, $\mathcal{C}_B$ and ${}_A\mathcal{C}_B$ the category of left $A$-modules, right $B$-modules and $A$-$B$-bimodules in $\mathcal{C}$, respectively. For $M \in \mathcal{C}_A$ and $N \in {}_A\mathcal{C}$, their tensor product over $A$ is defined and denoted by
\begin{equation*}
  M \otimes_A N = \text{Coker}(a^{r} \otimes \id_N - \id_M \otimes a^{\ell} : M \otimes A \otimes N \to M \otimes N),
\end{equation*}
where $a^{r} : M \otimes A \to M$ and $a^{\ell} : A \otimes N \to N$ are the left and the right action of $A$ on $M$ and $N$, respectively. An object $M \in {}_A\mathcal{C}_B$ defines a right exact additive functor $(-) \otimes_A M : \mathcal{C}_A \to \mathcal{C}_B$.

The Eilenberg-Watts theorem asserts that a cocontinuous linear functor between categories of modules over an algebra is given by tensoring a bimodule.
We note that $\mathcal{C}_R$ for an algebra $R$ in $\mathcal{C}$ is a left $\mathcal{C}$-module category. Moreover, by our assumption that the tensor product of $\mathcal{C}$ is right exact in each variable, we see that the functor $(-) \otimes_A M$ as above is a strong left $\mathcal{C}$-module functor. Strong module functors of this form are characterized as follows:

\begin{theorem}[{Pareigis \cite[Theorem 4.2]{MR0498792}}]
  \label{thm:EW}
  Let $\mathcal{C}$ be an abelian monoidal category with right exact monoidal product.
  For algebras $A$ and $B$ in $\mathcal{C}$, the following fucntor is a category equivalence:
  \begin{equation}
    \label{eq:EW-theorem-Pareigis}
    \begin{aligned}
      {}_A\mathcal{C}_B
      & \to \{ \text{right exact strong $\mathcal{C}$-module functors $\mathcal{C}_A \to \mathcal{C}_B$} \}, \\
      M & \mapsto (-) \otimes_A M.
    \end{aligned}
  \end{equation}
\end{theorem}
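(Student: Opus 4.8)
The plan is to construct an explicit quasi-inverse to the functor \eqref{eq:EW-theorem-Pareigis}, which I denote by $\Phi$, and then to check that the two composites are isomorphic to the respective identity functors. On objects, the quasi-inverse $\Psi$ sends a right exact strong $\mathcal{C}$-module functor $F : \mathcal{C}_A \to \mathcal{C}_B$ to the object $F(A) \in \mathcal{C}_B$, where $A$ is regarded as the regular right $A$-module. I would endow $F(A)$ with the left $A$-action $A \otimes F(A) \xrightarrow{\xi_{A,A}} F(A \otimes A) \xrightarrow{F(m)} F(A)$, where $m : A \otimes A \to A$ is the multiplication, viewed as a morphism in $\mathcal{C}_A$, and $\xi$ is the module constraint of $F$. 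This action is automatically right $B$-linear because both $\xi_{A,A}$ and $F(m)$ are morphisms in $\mathcal{C}_B$, so $F(A) \in {}_A\mathcal{C}_B$; its associativity and unitality follow from the coherence axioms \eqref{eq:def-lax-module-functor} for $\xi$ together with the associativity and unit axioms of $A$. On morphisms, $\Psi$ sends a module natural transformation $\alpha : F \to G$ to $\alpha_A : F(A) \to G(A)$, which is a bimodule map by the naturality of $\alpha$ and its compatibility with the module constraints.

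For the composite $\Psi \circ \Phi$, observe that $\Phi(M) = (-) \otimes_A M$ evaluated at the regular module $A$ is $A \otimes_A M$, and the canonical isomorphism $A \otimes_A M \cong M$ is one of $A$-$B$-bimodules and is natural in $M$. This settles one direction with little effort.

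The substantial direction is $\Phi \circ \Psi$, for which I must produce a natural isomorphism $\eta_F : (-) \otimes_A F(A) \xrightarrow{\sim} F$ of strong $\mathcal{C}$-module functors. For $N \in \mathcal{C}_A$, I would first define $\widetilde{\eta}_{F,N} : N \otimes F(A) \xrightarrow{\xi_{N,A}} F(N \otimes A) \xrightarrow{F(a^r_N)} F(N)$, where $a^r_N : N \otimes A \to N$ is the right action. A direct computation using the coherence axioms for $\xi$, its naturality in both variables, and the associativity of the action shows that $\widetilde{\eta}_{F,N}$ coequalizes the pair of maps whose cokernel defines $N \otimes_A F(A)$; hence it descends to a morphism $\eta_{F,N} : N \otimes_A F(A) \to F(N)$, natural in $N$. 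To see that $\eta_{F,N}$ is invertible, I would first treat the free modules $N = X \otimes A$ ($X \in \mathcal{C}$): here the coherence axioms identify $\eta_{F, X \otimes A}$ with the constraint $\xi_{X,A} : X \otimes F(A) \to F(X \otimes A)$ under the canonical isomorphism $(X \otimes A) \otimes_A F(A) \cong X \otimes F(A)$, and $\xi_{X,A}$ is invertible because $F$ is strong. For general $N$, I would use the standard free presentation $N \otimes A \otimes A \to N \otimes A \to N \to 0$ in $\mathcal{C}_A$, whose two outer terms are free: since both $(-) \otimes_A F(A)$ and $F$ are right exact, applying them yields a diagram with exact rows in which $\eta_F$ is an isomorphism on the two free terms, so a standard diagram chase (the five lemma, using uniqueness of cokernels) forces $\eta_{F,N}$ to be an isomorphism. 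Finally, I would verify that $\eta_F$ is a morphism of $\mathcal{C}$-module functors and is natural in $F$, both of which follow directly from the definitions.

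The main obstacle is precisely this last direction: establishing that $\eta_{F,N}$ is an isomorphism for every $N$, which hinges on reducing from the free modules via right exactness, and on keeping the coherence bookkeeping under control—namely the interplay between the constraint $\xi$, its naturality, and the coequalizer defining $\otimes_A$—so that $\eta_F$ genuinely respects the strong $\mathcal{C}$-module structure. By contrast, the well-definedness of $\Psi$ and the isomorphism $\Psi \circ \Phi \cong \id$ are routine.
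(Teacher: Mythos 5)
The paper does not prove this theorem; it is quoted verbatim from Pareigis \cite[Theorem 4.2]{MR0498792}, so there is no internal proof to compare against. Your argument is the standard Eilenberg--Watts construction of a quasi-inverse, transported to the monoidal setting, and it is essentially correct and complete: the left $A$-action $F(m)\circ\xi_{A,A}$ on $F(A)$, the descent of $F(a^r_N)\circ\xi_{N,A}$ to the coequalizer $N\otimes_A F(A)$ (which does follow from naturality of $\xi$ in each variable plus the coherence axiom $\xi_{N,A\otimes A}\circ(\id_N\otimes\xi_{A,A})=\xi_{N\otimes A,A}$), the identification of $\eta_{F,X\otimes A}$ with $\xi_{X,A}$ on free modules, and the reduction to free modules via the split presentation $N\otimes A\otimes A\to N\otimes A\to N\to 0$ and right exactness of both functors. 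This is the same strategy as in Pareigis' original proof. Two small points you gloss over but should spell out if writing this up: (i) the five lemma is not needed in its full form --- you are comparing cokernels of a two-term presentation under a natural transformation that is an isomorphism on the two free terms, which is a direct cokernel argument; and (ii) the verification that $\eta_F$ respects the module constraints requires knowing that the constraint of $(-)\otimes_A F(A)$ is the canonical map induced on cokernels by right exactness of $X\otimes(-)$, after which the check is a routine but nontrivial coherence diagram, not merely ``direct from the definitions.'' Neither point affects the validity of the argument.
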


One may wonder why the cocontinuous condition in the original Eilenberg-Watts theorem does not appear explicitly in this theorem. This issue can be explained by the discussion in Subsection~\ref{subsec:can-vec-module-structure} as follows: Since any colimit in an abelian category can be built from coproducts and coequalizers, a right exact linear functor between categories of modules over algebras is cocontinuous if and only if it preserves coproducts.
Thus, when $\mathcal{C} = \Vect$, the target category of \eqref{eq:EW-theorem-Pareigis} is identified with the category of cocontinuous linear functors from $\mathcal{C}_A$ to $\mathcal{C}_B$ by Lemmas~\ref{lem:coproduct-preserving-implies-strong-Vec} and \ref{lem:coproduct-preserving-implies-strong-Vec-2}.

Our consideration in Subsection~\ref{subsec:can-vec-module-structure} also clarifies the relation between Takeuchi's Theorem \ref{thm:EW-coalgebra} and Pareigis' Theorem~\ref{thm:EW}. In fact, Theorem~\ref{thm:EW-coalgebra} is a consequence of Theorem~\ref{thm:EW}. To explain this, we consider the case where $\mathcal{C} = \mathcal{T}^{\op}$ for some abelian monoidal category $\mathcal{T}$ with left exact tensor product.
Let $C$ and $D$ be coalgebras in the category $\mathcal{T}$.
If we denote by $A$ and $B$ the coalgebra $C$ and $D$ viewed as an algebra in $\mathcal{C}$, respectively, then the category ${}_A\mathcal{C}_B$ is the opposite category of the category of $C$-$D$-bicomodules in $\mathcal{T}$. Moreover, the tensor product over $A$ is the same thing as the cotensor product over $C$ (which is defined by the same formula as~\eqref{eq:def-cotensor-product} in a monoidal category). Thus, by applying Theorem~\ref{thm:EW} to $\mathcal{C} = \mathcal{T}^{\op}$, we obtain:

\begin{corollary}
  \label{cor:EW-Pareigis}
  Let $\mathcal{T}$ be an abelian monoidal category with left exact monoidal product.
  For coalgebras $C$ and $D$ in $\mathcal{T}$, the assignment $M \mapsto (-) \coten_C M$ establishes an equivalence from the category of $C$-$D$-bicomodules in $\mathcal{T}$ to the category of left exact strong $\mathcal{T}$-module functors from the category of right $C$-comodules in $\mathcal{T}$ and that of right $D$-comodules in $\mathcal{T}$.
\end{corollary}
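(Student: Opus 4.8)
The plan is to deduce the corollary from Pareigis' Theorem~\ref{thm:EW} by applying it to the opposite category $\mathcal{C} = \mathcal{T}^{\op}$ and then translating the resulting statement back to $\mathcal{T}$, exactly as indicated before the statement. First I would check that $\mathcal{C} = \mathcal{T}^{\op}$ satisfies the hypotheses of Theorem~\ref{thm:EW}. The opposite of an abelian category is abelian, and $\mathcal{C}$ inherits a monoidal structure from $\mathcal{T}$ with the same tensor product on objects. Since the short exact sequences of $\mathcal{C}$ are precisely the reverses of those of $\mathcal{T}$, and kernels and cokernels are interchanged by $(-)^{\op}$, the left exactness of $\otimes$ in $\mathcal{T}$ becomes right exactness of $\otimes$ in $\mathcal{C}$. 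Thus Theorem~\ref{thm:EW} is available for $\mathcal{C}$.

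Next I would set up the dictionary between structures in $\mathcal{T}$ and in $\mathcal{C}$. Writing $A$ and $B$ for the coalgebras $C$ and $D$ regarded as algebras in $\mathcal{C}$, a comultiplication $C \to C \otimes C$ in $\mathcal{T}$ is precisely a multiplication $A \otimes A \to A$ in $\mathcal{C}$, and the counit becomes the unit. In the same way a right $C$-coaction $M \to M \otimes C$ in $\mathcal{T}$ is a right $A$-action in $\mathcal{C}$, so that the category of right $A$-modules satisfies $\mathcal{C}_A = (\mathcal{T}^C)^{\op}$, where $\mathcal{T}^C$ denotes the category of right $C$-comodules in $\mathcal{T}$; likewise $\mathcal{C}_B = (\mathcal{T}^D)^{\op}$ and ${}_A\mathcal{C}_B = ({}^C\mathcal{T}^D)^{\op}$, where ${}^C\mathcal{T}^D$ is the category of $C$-$D$-bicomodules in $\mathcal{T}$. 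In each case the objects coincide and only the direction of morphisms is reversed. I would then verify that the tensor product over $A$ agrees with the cotensor product: by definition $M \otimes_A N$ is the cokernel in $\mathcal{C}$ of $a^{r} \otimes \id_N - \id_M \otimes a^{\ell}$, which is the kernel in $\mathcal{T}$ of $\coactr_M \otimes \id_N - \id_M \otimes \coactl_N$, i.e. $M \coten_C N$ as in \eqref{eq:def-cotensor-product}. Finally, because $\mathcal{C}^{\op} = \mathcal{T}$, passing to opposite categories throughout carries left $\mathcal{C}$-module structures to left $\mathcal{T}$-module structures; in particular $\mathcal{T}^C = (\mathcal{C}_A)^{\op}$ becomes a left $\mathcal{T}$-module category via $X \otimes (-)$, with the same action on objects as on $\mathcal{C}_A$ (no handedness flip occurs, since we tensor on the same side).

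With this dictionary in place, the remaining and genuinely delicate step is the translation of the functor categories under $(-)^{\op}$. A functor $G : \mathcal{C}_A \to \mathcal{C}_B$ corresponds to a functor $\overline{G} : \mathcal{T}^C \to \mathcal{T}^D$ acting the same way on objects; right exactness of $G$ corresponds to left exactness of $\overline{G}$, and the assignment $G \mapsto \overline{G}$ is contravariant on natural transformations. Hence the category $\mathcal{D}$ of right exact strong $\mathcal{C}$-module functors $\mathcal{C}_A \to \mathcal{C}_B$ is equivalent to $(\mathcal{D}')^{\op}$, where $\mathcal{D}'$ is the category of left exact strong $\mathcal{T}$-module functors $\mathcal{T}^C \to \mathcal{T}^D$. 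The subtle point is the module structure: an invertible structure morphism $X \otimes G(M) \xrightarrow{\sim} G(X \otimes M)$ in $\mathcal{C}_B$ becomes, after reversing, an isomorphism $\overline{G}(X \otimes M) \xrightarrow{\sim} X \otimes \overline{G}(M)$ in $\mathcal{T}^D$, whose inverse supplies the strong $\mathcal{T}$-module structure on $\overline{G}$, and the coherence axioms \eqref{eq:def-lax-module-functor} transport correctly precisely because these morphisms are invertible. This is exactly where restricting to strong functors is essential; for merely lax functors the direction of the structure morphism $\xi$ would fail to match, so I expect this bookkeeping to be the main obstacle.

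Finally I would assemble the pieces. Pareigis' Theorem~\ref{thm:EW} gives an equivalence ${}_A\mathcal{C}_B \to \mathcal{D}$, $M \mapsto (-) \otimes_A M$. Substituting ${}_A\mathcal{C}_B = ({}^C\mathcal{T}^D)^{\op}$ and $\mathcal{D} \simeq (\mathcal{D}')^{\op}$ and then applying $(-)^{\op}$ to the whole equivalence yields an equivalence ${}^C\mathcal{T}^D \to \mathcal{D}'$ with the same action on objects. Since $(-) \otimes_A M$ and $(-) \coten_C M$ have the same underlying object-assignment by the dictionary above, and taking opposites does not alter the object map, this functor is $M \mapsto (-) \coten_C M$. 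This is precisely the assertion of the corollary, and everything apart from the module-functor translation of the previous paragraph is a routine transport of structure.
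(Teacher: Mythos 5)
Your proposal is correct and follows essentially the same route as the paper, which likewise deduces the corollary from Pareigis' Theorem~\ref{thm:EW} applied to $\mathcal{C} = \mathcal{T}^{\op}$ via the dictionary ${}_A\mathcal{C}_B = ({}^C\mathcal{T}^D)^{\op}$ and $\otimes_A = \coten_C$; your extra care about transporting the strong module structure under $(-)^{\op}$ (where invertibility of $\xi$ is what makes the direction-reversal harmless) is exactly the point the paper leaves implicit.
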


Now let $C$ and $D$ be ordinary coalgebras. By Lemmas~\ref{lem:Vec-module-vs-Vec-enriched}--\ref{lem:coproduct-preserving-implies-strong-Vec-2}, the category $\mathscr{L}(\Mod^C, \Mod^D)$ in Theorem \ref{thm:EW-coalgebra} is identified with the category of left exact strong $\Vect$-module functors from $\Mod^C$ to $\Mod^D$. Thus Theorem \ref{thm:EW-coalgebra} follows from the above corollary applied to $\mathcal{T} = \Vect$.

\section{Equivariant Eilenberg-Watts theorem}
\label{sec:equivariant-EW}

\subsection{Main result}

Let $H$ be a bialgebra, and let $C$ be a left $H$-module coalgebra, that is, a coalgebra in the monoidal category ${}_H\Mod$ of left $H$-modules. If $X \in \Mod^H$ and $M \in \Mod^C$, then the vector space $X \otimes M$ becomes a right $C$-comodule by the coaction given by
\begin{equation*}
  \coactr_{X \otimes M}(x \otimes m) = x_{(0)} \otimes m_{(0)} \otimes x_{(1)} m_{(1)}
\end{equation*}
for $x \in X$ and $m \in M$. As is well-known, the category $\Mod^C$ is a left module category over $\Mod^H$ by this operation.

Let $D$ also be a left $H$-module coalgebra. We denote by ${}^C_H\Mod^D$ the category of $C$-$D$-bicomodules in ${}_H\Mod$. Namely, an object of this category is a left $H$-module $M$ equipped with a structure of $C$-$D$-bicomodule such that the equations
\begin{equation}
  \label{eq:C-H-Mod-D-axiom}
  \coactl_M(h m) = h_{(1)} m_{(-1)} \otimes h_{(2)} m_{(0)}, \quad
  \coactr_M(h m) = h_{(1)} m_{(0)} \otimes h_{(2)} m_{(1)}
\end{equation}
hold for all elements $h \in H$ and $m \in M$. A morphism in ${}^C_H\Mod^D$ is a linear map that is left $H$-linear, left $C$-colinear and right $D$-colinear.

In this section, we state and prove the main result of this paper, which says that the category of lax $\Mod^H$-module linear functors from $\Mod^C$ to $\Mod^D$ preserving coproducts is equivalent to ${}^C_H\Mod^D_{}$. We first give a functor from the latter category to the former.
Given $M \in {}^C\Mod^D$, we write
\begin{equation*}
  T_M := (-) \coten_C M : \Mod^C \to \Mod^D.
\end{equation*}

\begin{lemma}
  \label{lem:equivariant-EW-1}
  Suppose that $M$ is an object of ${}^C_H\Mod^D_{}$ with left $H$-action $\bullet$.
  Then $T_M$ becomes a lax right $\Mod^H$-module functor together with the structure map
  \begin{equation*}
    \xi^{\bullet}_{X,W} : X \otimes T_M(W) \to T_M(X \otimes W)
    \quad (W \in \Mod^C, X \in \Mod^H)
  \end{equation*}
  induced by the linear map
  \begin{equation}
    \label{eq:T-M-structure-morphism}
    \begin{aligned}
      X \otimes (W \otimes M)
      & \to (X \otimes W) \otimes M, \\
      x \otimes (w \otimes m)
      & \mapsto (x_{(0)} \otimes w) \otimes (x_{(1)} \bullet m)
      \quad (x \in X, w \in W, m \in M)
    \end{aligned}
  \end{equation}
\end{lemma}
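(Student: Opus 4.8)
The plan is to verify, in turn, that the formula \eqref{eq:T-M-structure-morphism} descends to a well-defined map on the cotensor product, that the resulting map is right $D$-colinear and natural, and finally that it satisfies the two axioms \eqref{eq:def-lax-module-functor}.

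First I would establish that \eqref{eq:T-M-structure-morphism} restricts to a map $\xi^{\bullet}_{X,W} : X \otimes (W \coten_C M) \to (X \otimes W) \coten_C M$. Since $\otimes_{\bfk}$ is exact, $X \otimes (W \coten_C M)$ is the subspace of $X \otimes W \otimes M$ cut out by the relation defining $W \coten_C M$ in \eqref{eq:def-cotensor-product}, so by linearity it suffices to check that the image of a generator $x \otimes \eta$, with $\eta = \sum w \otimes m$ satisfying $\sum w_{(0)} \otimes w_{(1)} \otimes m = \sum w \otimes m_{(-1)} \otimes m_{(0)}$, lies in $(X \otimes W) \coten_C M$. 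The cleanest device is to introduce, for fixed $x \in X$, the linear map $\Theta_x : W \otimes C \otimes M \to X \otimes W \otimes C \otimes M$, $\Theta_x(w \otimes c \otimes m) = x_{(0)} \otimes w \otimes (x_{(1)} c) \otimes (x_{(2)} \bullet m)$, built from the threefold $H$-coaction of $x$, the $H$-action on $C$, and the action $\bullet$. Using the coassociativity of the $H$-coaction of $X$ to merge the Sweedler legs of $x$ and the left-comodule axiom in \eqref{eq:C-H-Mod-D-axiom}, one checks that $(\coactr_{X \otimes W} \otimes \id_M) \circ \xi^{\bullet}_{X,W}(x \otimes \eta) = \Theta_x\bigl(\sum w_{(0)} \otimes w_{(1)} \otimes m\bigr)$ while $(\id_{X \otimes W} \otimes \coactl_M) \circ \xi^{\bullet}_{X,W}(x \otimes \eta) = \Theta_x\bigl(\sum w \otimes m_{(-1)} \otimes m_{(0)}\bigr)$. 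These two agree precisely because $\eta \in W \coten_C M$, so $\xi^{\bullet}_{X,W}(x \otimes \eta)$ lies in the kernel defining $(X \otimes W) \coten_C M$. I expect this to be the main obstacle: it is the step where the bicomodule axioms \eqref{eq:C-H-Mod-D-axiom} genuinely enter, and it demands careful Sweedler bookkeeping to identify the auxiliary map $\Theta_x$.

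Next I would check that $\xi^{\bullet}_{X,W}$ is a morphism in $\Mod^D$, where the right $D$-coaction on $X \otimes T_M(W)$ comes from the $\Mod^H$-action and that on $T_M(X \otimes W)$ from the right $D$-coaction of $M$. Applying both coactions to $x \otimes \eta$ and rewriting $\coactr_M(x_{(1)} \bullet m)$ by the right-comodule axiom in \eqref{eq:C-H-Mod-D-axiom}, together again with the coassociativity of the $H$-coaction of $X$, shows that the two sides coincide; this computation is entirely parallel to the previous one but uses the $D$-part of \eqref{eq:C-H-Mod-D-axiom} in place of the $C$-part. Naturality of $\xi^{\bullet}$ in both $W$ and $X$ is then immediate from the explicit formula \eqref{eq:T-M-structure-morphism}, since a $C$-colinear map $W \to W'$ acts only on the $W$-leg while leaving the $X$- and $M$-legs untouched, and an $H$-linear map $X \to X'$ is compatible with both the $H$-coaction used and the action $\bullet$.

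Finally I would verify the two axioms \eqref{eq:def-lax-module-functor}. The unit axiom $\xi^{\bullet}_{\unitobj, W} = \id_{T_M(W)}$ follows by taking $x = 1 \in \bfk$ with its trivial coaction and using $1_H \bullet m = m$. For the hexagon $\xi^{\bullet}_{X \otimes Y, W} = \xi^{\bullet}_{X, Y \otimes W} \circ (\id_X \otimes \xi^{\bullet}_{Y,W})$, I would evaluate both sides on $(x \otimes y) \otimes (w \otimes m)$: using $\coactr_{X \otimes Y}(x \otimes y) = x_{(0)} \otimes y_{(0)} \otimes x_{(1)} y_{(1)}$, the left-hand side yields $((x_{(0)} \otimes y_{(0)}) \otimes w) \otimes ((x_{(1)} y_{(1)}) \bullet m)$, whereas the right-hand side yields $(x_{(0)} \otimes (y_{(0)} \otimes w)) \otimes (x_{(1)} \bullet (y_{(1)} \bullet m))$. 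These agree by the strictness of the module category and the associativity of the action $\bullet$, completing the verification.
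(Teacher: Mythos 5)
Your proof is correct and follows essentially the same route as the paper: the only substantive step is the well-definedness of $\xi^{\bullet}_{X,W}$, which the paper verifies by exactly the Sweedler computation that your auxiliary map $\Theta_x$ formalizes (applying a map built from the legs of $x$ to the cotensor relation and invoking \eqref{eq:C-H-Mod-D-axiom}), while the $D$-colinearity, naturality and the axioms \eqref{eq:def-lax-module-functor} are dispatched as routine there too. (A minor terminological slip: morphisms in $\Mod^H$ are $H$-colinear rather than $H$-linear, but this does not affect your naturality argument.)
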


Before we give a proof of this lemma, we introduce the inverse of the construction given in this lemma. We fix a bicomodule $M \in {}^C\Mod^D$ and assume that $T_M : \Mod^C \to \Mod^D$ is a lax $\Mod^H$-module functor together with structure morphism
\begin{equation*}
  \xi_{X,W} : X \otimes T_M(W) = X \otimes (W \coten_C M)
  \to (X \otimes W) \coten_C M = T_M(X \otimes W),
\end{equation*}
where $X \in \Mod^H$ and $W \in \Mod^C$.
Since $C \coten_C M = \Img(\coactl_M)$, the element $h \otimes m_{(-1)} \otimes m_{(0)}$ for $h \in H$ and $m \in M$ belongs to the domain of the linear map $\xi_{H,C}$. Now we define the bilinear operation $\bullet_{\xi} : H \times M \to M$ by
\begin{equation}
  \label{eq:equivariant-EW-1-action-def}
  h \bullet_{\xi} m := (\varepsilon_H \otimes \varepsilon_C \otimes \id_M)
  \xi_{H, C}(h \otimes m_{(-1)} \otimes m_{(0)})
\end{equation}
for $h \in H$ and $m \in M$. Then we have:

\begin{lemma}
  \label{lem:equivariant-EW-2}
  The bicomodule $M$ is an object of ${}_H^C\Mod^D$ by the left $H$-action $\bullet_{\xi}$.
\end{lemma}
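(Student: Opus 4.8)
The plan is to show that the given structure morphism $\xi$ is forced to coincide with the morphism $\xi^{\bullet_{\xi}}$ of Lemma~\ref{lem:equivariant-EW-1} built from $\bullet_{\xi}$, and then to read off every axiom from this identification together with the lax-module axioms~\eqref{eq:def-lax-module-functor}. Concretely, the central step (and the main obstacle) is to establish the explicit formula
\begin{equation*}
  \xi_{H,C}(h \otimes m_{(-1)} \otimes m_{(0)})
  = h_{(1)} \otimes m_{(-1)} \otimes (h_{(2)} \bullet_{\xi} m_{(0)})
  \qquad (h \in H,\ m \in M),
\end{equation*}
and, more generally, $\xi_{X,W}(x \otimes \zeta) = \sum_i (x_{(0)} \otimes w^i) \otimes (x_{(1)} \bullet_{\xi} m^i)$ for $\zeta = \sum_i w^i \otimes m^i \in W \coten_C M$. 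The definition~\eqref{eq:equivariant-EW-1-action-def} only records the image of $\xi_{H,C}(h \otimes m_{(-1)} \otimes m_{(0)})$ under $\varepsilon_H \otimes \varepsilon_C \otimes \id_M$, so the real work is to recover the full element of $(H \otimes C) \coten_C M$.

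To prove the formula at $W = C$ I would use two naturality reductions, relying throughout on the isomorphism $\coactl_M : M \xrightarrow{\sim} C \coten_C M$. First, applying naturality of $\xi$ in the $\Mod^C$-variable to the right $C$-colinear convolution endomorphisms $f_{\psi} : C \to C$, $c \mapsto \psi(c_{(1)}) c_{(2)}$ (for $\psi \in C^*$), and separating out $\psi$ via the injection $C \hookrightarrow C^{**}$, I would obtain $(\varepsilon_H \otimes \id_C \otimes \id_M)\,\xi_{H,C}(h \otimes \coactl_M(m)) = m_{(-1)} \otimes (h \bullet_{\xi} m_{(0)})$. Second, applying naturality in the $\Mod^H$-variable to the right $H$-colinear maps $g_{\chi} : H \to H$, $a \mapsto \chi(a_{(1)}) a_{(2)}$ (for $\chi \in H^*$), and combining with the first identity, I would compute $(\chi \otimes \id_C \otimes \id_M)\,\xi_{H,C}(h \otimes \coactl_M(m))$ for all $\chi$; by $H \hookrightarrow H^{**}$ this pins down the $H$-leg and yields the displayed formula. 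The general case then reduces to this one by naturality: an arbitrary $X$ maps to $H$ through the colinear maps $x \mapsto \phi(x_{(0)}) x_{(1)}$ ($\phi \in X^*$), while an arbitrary $W$ embeds into a cofree comodule $V \otimes C$, for which the formula follows from the $W = C$ case via the inclusions $C \to V \otimes C$ and Example~\ref{ex:canonical-Vect-module-structure}. I expect this passage to general $W$, together with the bookkeeping of the two dual-space separation arguments, to be the most delicate part.

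Granting the formula, the remaining assertions fall out. For unitality, naturality with respect to the colinear unit $u : \bfk \to H$, $1 \mapsto 1_H$, together with $\xi_{\unitobj, C} = \id$, gives $\xi_{H,C}(1_H \otimes \coactl_M(m)) = 1_H \otimes \coactl_M(m)$, whence $1_H \bullet_{\xi} m = m$. Associativity follows from the lax hexagon $\xi_{H \otimes H, C} = \xi_{H, H \otimes C} \circ (\id_H \otimes \xi_{H,C})$ combined with naturality with respect to the multiplication $\nabla : H \otimes H \to H$ (which is a morphism in $\Mod^H$ by the bialgebra axiom): substituting the explicit formula into both sides and applying $\varepsilon_H \otimes \varepsilon_C \otimes \id_M$ turns this identity into $(hk) \bullet_{\xi} m = h \bullet_{\xi} (k \bullet_{\xi} m)$. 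For the left $C$-comodule compatibility, the formula exhibits $\xi_{H,C}(h \otimes \coactl_M(m))$ as an element of $(H \otimes C) \coten_C M$; applying $\varepsilon_H \otimes \varepsilon_C \otimes \id_C \otimes \id_M$ to the defining cotensor equation then gives exactly $\coactl_M(h \bullet_{\xi} m) = h_{(1)} m_{(-1)} \otimes h_{(2)} m_{(0)}$.

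Finally, the right $D$-comodule compatibility can be obtained directly, without invoking the formula: since $\xi_{H,C}$ is a morphism in $\Mod^D$, applying $\varepsilon_H \otimes \varepsilon_C \otimes \id_M \otimes \id_D$ to its $D$-colinearity equation at $h \otimes \coactl_M(m)$ collapses both sides through the definition~\eqref{eq:equivariant-EW-1-action-def} of $\bullet_{\xi}$ and yields $\coactr_M(h \bullet_{\xi} m) = h_{(1)} m_{(0)} \otimes h_{(2)} m_{(1)}$. Together with the unit and associativity laws, this establishes both conditions in~\eqref{eq:C-H-Mod-D-axiom} and completes the proof.
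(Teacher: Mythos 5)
Your proposal is correct and follows essentially the same route as the paper's proof: the heart of both arguments is the explicit formula $\xi_{X,W}(x \otimes w \otimes m) = x_{(0)} \otimes w \otimes (x_{(1)} \bullet_{\xi} m)$ (the paper's Claim~\ref{claim:equivariant-EW-3}), recovered from naturality of $\xi$ together with its triviality on trivial comodules and an embedding of arbitrary comodules into cofree ones, after which the unit, associativity and compatibility axioms are read off from the lax-module axioms exactly as you describe. Your functionals $f_{\psi}$ and $g_{\chi}$ with the dual-space separation argument are just the evaluated form of the paper's use of $\Delta_C : C \to C_0 \otimes C$ and $\Delta_H : H \to H_0 \otimes H$ as colinear maps into copowers, so the difference is purely presentational.
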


We postpone the proof of this lemma. Based on Lemmas \ref{lem:equivariant-EW-1} and \ref{lem:equivariant-EW-2}, the main result of this paper is stated as follows:

\begin{theorem}
  \label{thm:equivariant-EW}
  Let $H$, $C$ and $D$ be as above. For each $C$-$D$-bicomodule $M$, the maps $\bullet \mapsto \xi^{\bullet}$ and $\xi \mapsto \bullet_{\xi}$ of Lemmas \ref{lem:equivariant-EW-1} and \ref{lem:equivariant-EW-2} establish bijections between the following two sets:
  \begin{enumerate}
  \item[(1)] The set of maps $H \times M \to M$ making $M$ an object of ${}^C_H\Mod^D$.
  \item[(2)] The set of natural transformations making the functor $T_M$ a lax left $\Mod^H$-module functor from $\Mod^C$ to $\Mod^D$.
  \end{enumerate}
  Furthermore, Takeuchi's equivalence induces an equivalence
  \begin{equation*}
    {}_H^C\Mod^D \approx \mathscr{L}_{\Mod^H}^{\lax}(\Mod^C, \Mod^D),
  \end{equation*}
  where the right hand side is the category of lax $\Mod^H$-module functors from $\Mod^C$ to $\Mod^D$ whose underlying functor is of type $\mathscr{L}$.
\end{theorem}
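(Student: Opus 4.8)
The plan is to establish the theorem in two stages, corresponding to its two assertions. First I would prove the pointwise bijection between sets (1) and (2) for each fixed bicomodule $M \in {}^C\Mod^D$. Then I would upgrade this to the claimed category equivalence by combining it with Takeuchi's Theorem~\ref{thm:EW-coalgebra} and the $\Vect$-module machinery of Section~\ref{sec:preliminaries}.

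For the bijection, the strategy is to show that the two constructions $\bullet \mapsto \xi^{\bullet}$ and $\xi \mapsto \bullet_{\xi}$ are mutually inverse. This requires proving two identities. The first, $\bullet_{\xi^{\bullet}} = \bullet$, should be a direct computation: substitute the explicit formula~\eqref{eq:T-M-structure-morphism} for $\xi^{\bullet}_{H,C}$ into the definition~\eqref{eq:equivariant-EW-1-action-def} of $\bullet_{\xi}$, apply the counits $\varepsilon_H$ and $\varepsilon_C$, and use the counit axioms of $H$ and $C$ together with the left $C$-comodule axiom to collapse the expression back to $h \bullet m$. The reverse identity $\xi^{\bullet_{\xi}} = \xi$ is the more delicate direction. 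Here I would use that $\xi$ is not an arbitrary natural transformation but one satisfying the lax module axioms~\eqref{eq:def-lax-module-functor} and naturality; the key is that naturality in the $\Mod^C$-variable, applied to the comodule structure map $W \to C \otimes W$ (or to morphisms out of cofree comodules), reduces the value of $\xi_{X,W}$ on a general $W$ to its value on $C$, which is precisely what~\eqref{eq:equivariant-EW-1-action-def} records. This dévissage from general $W$ back to the ``universal'' object $C$ is the technical heart of the argument.

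With the pointwise bijection in hand, I would assemble the equivalence as follows. Takeuchi's functor $\Phi_{C,D}$ is already an equivalence ${}^C\Mod^D \approx \mathscr{L}(\Mod^C, \Mod^D)$ by Theorem~\ref{thm:EW-coalgebra}. The plan is to exhibit ${}^C_H\Mod^D$ and $\mathscr{L}_{\Mod^H}^{\lax}(\Mod^C, \Mod^D)$ as ``the same extra structure'' layered on top of the objects of these two equivalent categories: an object of ${}^C_H\Mod^D$ is a bicomodule $M$ together with an $H$-action from set (1), and an object of the target is a type-$\mathscr{L}$ functor $F \cong T_M$ together with a lax $\Mod^H$-structure from set (2). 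The bijection of the first part matches these extra structures compatibly with the underlying equivalence $\Phi_{C,D}$. To conclude, I must check that this matching is compatible with morphisms: a morphism in ${}^C_H\Mod^D$ is a bicomodule morphism that is in addition $H$-linear, and I would verify that under $\Phi_{C,D}$ such maps correspond exactly to module-functor morphisms (natural transformations commuting with the $\xi$'s), which amounts to chasing the definition of $\xi^{\bullet}$ through the naturality square. Faithfulness and fullness on these morphisms then follow from the corresponding properties of $\Phi_{C,D}$ together with the bijection on structures.

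The main obstacle I anticipate is the direction $\xi^{\bullet_{\xi}} = \xi$, because it forces me to use the full strength of the lax module axioms and naturality rather than a bare formula. The subtlety is that $\xi_{X,W}$ must be recovered on every $W$ from its single value $\xi_{H,C}$, and making this reduction rigorous requires identifying how an arbitrary right $C$-comodule $W$ maps to $C$-cofree objects so that naturality transports $\xi$ correctly; one must also confirm that the recovered $\xi^{\bullet_{\xi}}$ lands in the correct subspace $(X \otimes W)\coten_C M$ and respects the coassociativity encoded in the first axiom of~\eqref{eq:def-lax-module-functor}. A secondary point requiring care is verifying that $\xi^{\bullet}$ is genuinely well-defined, i.e.\ that formula~\eqref{eq:T-M-structure-morphism} restricts to a map $X \otimes (W \coten_C M) \to (X \otimes W)\coten_C M$ between the cotensor subspaces; this uses the compatibility~\eqref{eq:C-H-Mod-D-axiom} of the $H$-action with both coactions, and I would dispatch it before invoking the formula elsewhere. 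Once these are settled, the categorical assembly is formal given Theorem~\ref{thm:EW-coalgebra}.
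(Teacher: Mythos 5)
Your proposal matches the paper's proof in both structure and substance: the paper likewise verifies $\bullet_{\xi^{\bullet}}=\bullet$ by a direct counit computation, establishes $\xi^{\bullet_{\xi}}=\xi$ by first pinning down $\xi_{H,C}$ and then extending to arbitrary $(X,W)$ via naturality and embeddings into cofree comodules (its Claim~\ref{claim:equivariant-EW-3}), and finishes by checking that $H$-linearity of a bicomodule map corresponds to being a morphism of lax module functors under $\Phi_{C,D}$. You have correctly identified the direction $\xi^{\bullet_{\xi}}=\xi$ and the cofree-comodule d\'evissage as the technical heart, so the approach is essentially the same as the paper's.
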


One can formulate and prove analogous results for right module coalgebras and bimodule coalgebras in the same way as this theorem. The detail is given in Subsection~\ref{subsec:equivariant-EW-variants}.

\subsection{Proof of Theorem~\ref{thm:equivariant-EW}}
We begin the proof of Theorem~\ref{thm:equivariant-EW}.
First of all, we shall establish yet unverified Lemmas~\ref{lem:equivariant-EW-1} and \ref{lem:equivariant-EW-2}. We recall the setting of the former lemma: $M$ is an object of the category ${}^C_H\Mod^D_{}$ with left $H$-action $\bullet$, and $\xi^{\bullet}_{X,W}$ for $X \in \Mod^H$ and $W \in \Mod^C$ is the linear map induced by \eqref{eq:T-M-structure-morphism}.

\begin{proof}[Proof of Lemma~\ref{lem:equivariant-EW-1}]
  We first check that the linear map $\xi^{\bullet}_{X,W}$ is well-defined.
  Let $x \in X$ and $t \in W \coten_C M$. Although $t$ is not a simple tensor in general, we write $t = w \otimes m$ for simplicity of notation. We note that the equation $w_{(0)} \otimes w_{(1)} \otimes m = w \otimes m_{(-1)} \otimes m_{(0)}$ holds since $t$ belongs to the cotensor product. Thus we have
  \begin{align*}
    & (x_{(0)} \otimes w) \otimes (x_{(1)} \bullet m)_{(-1)} \otimes (x_{(1)} \bullet m)_{(0)} \\
    & = (x_{(0)} \otimes w) \otimes x_{(1)} m_{(-1)} \otimes (x_{(2)} \bullet m_{(0)}) \\
    & = (x_{(0)} \otimes w_{(0)}) \otimes x_{(1)} w_{(1)} \otimes (x_{(2)} \bullet m) \\
    & = (x_{(0)} \otimes w_{(0)})_{(0)} \otimes (x_{(0)} \otimes w_{(0)})_{(1)} \otimes (x_{(2)} \bullet m),
  \end{align*}
  where the first and the third equality follow from \eqref{eq:C-H-Mod-D-axiom} and the definition of the coaction on $X \otimes W$, respectively. The above computation shows that the image of $X \otimes (W \coten_C M)$ under \eqref{eq:T-M-structure-morphism} is contained in $(X \otimes W) \coten_C M$. Namely, the linear map $\xi^{\bullet}_{X,W}$ is well-defined.

  It is straightforward to check that $\xi^{\bullet}_{X,W}$ is a morphism in $\Mod^D$ by using \eqref{eq:C-H-Mod-D-axiom}. It is also easy to see that $\xi^{\bullet}_{X,W}$ is natural in $X \in \Mod^H$ and $W \in \Mod^C$. The axiom~\eqref{eq:def-lax-module-functor} of lax module functors follows from the associativity and the unitality of the action $\bullet$. The proof is done.
\end{proof}

We prove Lemma~\ref{lem:equivariant-EW-2}. We fix $M \in {}^C\Mod^D$ and assume that $T_M : \Mod^C \to \Mod^D$ is a lax $\Mod^H$-module functor together with structure morphism $\xi$. Given $X \in \Vect$, we denote by $X_0$ the right $H$-comodule whose underlying vector space is $X$ and the coaction is given by $x \mapsto x \otimes 1_H$ ($x \in X$). For all $X \in \Vect$ and $W \in \Mod^C$, we have
\begin{equation*}
  (X_0 \otimes W) \coten_C M = X_0 \otimes (W \coten_C M)
\end{equation*}
as subspaces of $X \otimes W \otimes M$. Namely, the source and the target of the map $\xi_{X_0,W}$ are the same. We first remark the following easy fact:

\begin{claim}
  For all $X \in \Vect$ and $W \in \Mod^C$, we have
  \begin{equation}
    \label{eq:equivariant-EW-1-xi-W-x0}
    \xi_{X_0, W} = \id_{X_0 \otimes (W \coten_C M)}.
  \end{equation}
\end{claim}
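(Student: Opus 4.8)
The plan is to recognize equation \eqref{eq:equivariant-EW-1-xi-W-x0} as an instance of the uniqueness of the canonical $\Vect$-module structure. The starting observation is that the assignment $X \mapsto X_0$ defines a strict monoidal functor $(-)_0 : \Vect \to \Mod^H$: the identities $(X \otimes Y)_0 = X_0 \otimes Y_0$ and $\bfk_0 = \unitobj$ hold on the nose, since the coaction of $X_0 \otimes Y_0$ sends $x \otimes y$ to $x \otimes y \otimes 1_H 1_H = x \otimes y \otimes 1_H$. Moreover, unwinding the definition of the $\Mod^H$-action on $\Mod^C$, for $X \in \Vect$ and $W \in \Mod^C$ the $C$-comodule $X_0 \otimes W$ has coaction $x \otimes w \mapsto x \otimes w_{(0)} \otimes w_{(1)}$, which is precisely the copower $X \copow W$ according to the explicit description of the copower in $\Mod^C$ recalled in Subsection~\ref{subsec:can-vec-module-structure}. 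Thus restricting the $\Mod^H$-action along $(-)_0$ recovers, on the nose, the canonical $\Vect$-action on $\Mod^C$.

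First I would verify that the family $\{ \xi_{X_0, W} \}_{X \in \Vect,\, W \in \Mod^C}$ is a lax $\Vect$-module structure on the linear functor $T_M$. This follows formally from the fact that $(-)_0$ is strict monoidal: evaluating the two axioms \eqref{eq:def-lax-module-functor} for $\xi$ at objects of the form $X_0$, and using the identifications $(X \otimes Y)_0 = X_0 \otimes Y_0$ and $\bfk_0 = \unitobj$, yields exactly the two axioms of a lax $\Vect$-module structure. Here one uses that $\xi$ is already assumed to make $T_M$ a lax $\Mod^H$-module functor.

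Next I would appeal to Lemma~\ref{lem:Vec-module-vs-Vec-enriched}. Since the functor $F \mapsto (F, \widehat{F})$ is an isomorphism of categories, it is in particular bijective on objects; hence a linear functor carries exactly one lax $\Vect$-module structure, namely its canonical one. Applying this to $T_M$ gives $\xi_{X_0, W} = (\widehat{T_M})_{X, W}$ for all $X \in \Vect$ and $W \in \Mod^C$. Finally, by Example~\ref{ex:canonical-Vect-module-structure}, the canonical $\Vect$-module structure of $T_M = (-) \coten_C M$ is the identity natural transformation, so $\xi_{X_0, W} = \id_{X_0 \otimes (W \coten_C M)}$, which is the desired equation \eqref{eq:equivariant-EW-1-xi-W-x0}.

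The only real work lies in the bookkeeping of the first two paragraphs: confirming that $(-)_0$ is strictly monoidal and, more importantly, that the $\Vect$-action it induces coincides on the nose (not merely up to isomorphism) with the concrete copower model of Subsection~\ref{subsec:can-vec-module-structure}, so that the restricted structure is literally a lax $\Vect$-module structure on the given linear functor $T_M$ rather than on an isomorphic copy. Once these identifications are made strict, the uniqueness packaged in Lemma~\ref{lem:Vec-module-vs-Vec-enriched} does the rest, and no direct manipulation of $\xi$ is needed. The step most likely to require care is precisely this strictness of the identification $X_0 \otimes W = X \copow W$.
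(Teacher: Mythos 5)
Your proof is correct, and it is essentially the paper's argument in packaged form: the paper's one-line proof (``this follows from \eqref{eq:def-lax-module-functor} and the fact that any vector space is a direct sum of the unit object of $\Vect$'') is exactly the unit axiom plus the coproduct-decomposition/naturality mechanism that powers the uniqueness statement of Lemma~\ref{lem:Vec-module-vs-Vec-enriched}, which you invoke instead of redoing it. Your care about the on-the-nose identifications $X_0 \otimes W = X \copow W$ and $(X_0 \otimes W) \coten_C M = X_0 \otimes (W \coten_C M)$, together with Example~\ref{ex:canonical-Vect-module-structure}, is exactly what is needed to make that reduction legitimate.
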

\begin{proof}
  This follows from \eqref{eq:def-lax-module-functor} and the fact that any vector space is a direct sum of the unit object of $\Vect$.
\end{proof}

Next, before showing that the operation $\bullet_{\xi}$ makes $M$ a left $H$-module, we prove that $\xi_{X,W}$ is induced by the linear map~\eqref{eq:T-M-structure-morphism} with $\bullet = \bullet_{\xi}$. For this purpose, we verify the following equation:

\begin{claim}
  \label{claim:equivariant-EW-2}
  For $m \in M$ and $h \in H$, we have
  \begin{equation}
    \label{eq:equivariant-EW-1-proof-1}
    (\id_H \otimes \varepsilon_C \otimes \id_M) \xi_{H,C} (h \otimes m_{(-1)} \otimes m_{(0)})
    = h_{(1)} \otimes (h_{(2)} \bullet_{\xi} m).
  \end{equation}    
\end{claim}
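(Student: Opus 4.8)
The plan is to read off \eqref{eq:equivariant-EW-1-proof-1} from the naturality of $\xi$, using the lax module axiom \eqref{eq:def-lax-module-functor} and the already-proved identity \eqref{eq:equivariant-EW-1-xi-W-x0} to strip off a trivial comodule factor. The device that produces the comultiplication $\Delta_H$ on the right-hand side of \eqref{eq:equivariant-EW-1-proof-1} is a carefully chosen morphism in $\Mod^H$.

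First I would consider the regular right $H$-comodule $H$ (with coaction $\Delta_H$) and define $\rho \colon H \to H_0 \otimes H$ by $\rho(h) = h_{(1)} \otimes h_{(2)}$, where $H_0$ carries the trivial coaction and the second tensorand is regular. A direct check shows $\rho$ is a morphism in $\Mod^H$: both $\coactr_{H_0 \otimes H} \circ \rho$ and $(\rho \otimes \id_H) \circ \Delta_H$ send $h$ to $h_{(1)} \otimes h_{(2)} \otimes h_{(3)}$.

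Next I would write out naturality of $\xi$ in its first variable along $\rho$, namely
\begin{equation*}
  \xi_{H_0 \otimes H, C} \circ (\rho \otimes \id_{T_M(C)})
  = T_M(\rho \otimes \id_C) \circ \xi_{H,C},
\end{equation*}
and simplify the left-hand composite with \eqref{eq:def-lax-module-functor}, which gives $\xi_{H_0 \otimes H, C} = \xi_{H_0, H \otimes C} \circ (\id_{H_0} \otimes \xi_{H,C})$; here $\xi_{H_0, H \otimes C} = \id$ by \eqref{eq:equivariant-EW-1-xi-W-x0}, since $H_0$ is a trivial comodule. Evaluating the resulting identity on $h \otimes m_{(-1)} \otimes m_{(0)}$ and abbreviating $\Xi := \xi_{H,C}(h \otimes m_{(-1)} \otimes m_{(0)}) \in (H \otimes C) \coten_C M$, I would obtain
\begin{equation*}
  h_{(1)} \otimes \xi_{H,C}(h_{(2)} \otimes m_{(-1)} \otimes m_{(0)})
  = (\Delta_H \otimes \id_C \otimes \id_M)(\Xi)
\end{equation*}
as an identity in $H \otimes H \otimes C \otimes M$.

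Finally, I would apply $\id_H \otimes \varepsilon_H \otimes \varepsilon_C \otimes \id_M$ to both sides. The left-hand side becomes $h_{(1)} \otimes (h_{(2)} \bullet_{\xi} m)$ directly from the definition \eqref{eq:equivariant-EW-1-action-def} of $\bullet_{\xi}$, while on the right-hand side the counit axiom collapses $\Delta_H$, leaving $(\id_H \otimes \varepsilon_C \otimes \id_M)(\Xi)$; this is exactly \eqref{eq:equivariant-EW-1-proof-1}. The main obstacle is locating the correct auxiliary morphism $\rho$ together with the correct comodule structure on its target $H_0 \otimes H$, and arranging the naturality square so that \eqref{eq:def-lax-module-functor} and \eqref{eq:equivariant-EW-1-xi-W-x0} combine to eliminate the trivial factor; once this packaging is set up, only a routine counit computation remains.
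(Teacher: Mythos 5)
Your proposal is correct and is essentially the paper's own argument: both rest on the observation that $\Delta_H : H \to H_0 \otimes H$ is a morphism of right $H$-comodules, then combine naturality of $\xi$ along this map with the lax-module axiom \eqref{eq:def-lax-module-functor} and the identity \eqref{eq:equivariant-EW-1-xi-W-x0} to strip off the trivial factor, and finish with counitality and the definition \eqref{eq:equivariant-EW-1-action-def} of $\bullet_{\xi}$. The only difference is presentational (you state the naturality square first and apply the counit last, whereas the paper inserts the counit at the start and unwinds), so there is nothing to add.
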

\begin{proof}
  Noting that the comultiplication $\Delta_H : H \to H_0 \otimes H$ is a morphism of right $H$-comodules, we compute the left-hand side of the equation as follows:
  \begin{align*}
    & (\id_H \otimes \varepsilon_C \otimes \id_M) \xi_{H,C} (h \otimes m_{(-1)} \otimes m_{(0)}) \\
    & = (\id_{H_0} \otimes \varepsilon_H \otimes \varepsilon_C \otimes \id_M)
      (\Delta_H \otimes \id_C \otimes \id_M) \xi_{H,C} (h \otimes m_{(-1)} \otimes m_{(0)}) \\
    & = (\id_{H_0} \otimes \varepsilon_H \otimes \varepsilon_C \otimes \id_M)
      \xi_{H_0 \otimes H,C} (\Delta_H(h) \otimes m_{(-1)} \otimes m_{(0)}) \\
    & = (\id_{H_0} \otimes \varepsilon_H \otimes \varepsilon_C \otimes \id_M)
      (\id_{H_0} \otimes \xi_{H,C}) (h_{(1)} \otimes h_{(2)} \otimes m_{(-1)} \otimes m_{(0)}) \\
    & = h_{(1)} \otimes (h_{(2)} \bullet_{\xi} m).
  \end{align*}
  Here, the first equality follows from the counitality of $\Delta_H$,
  the second from the naturality of $\xi$,
  the third from \eqref{eq:def-lax-module-functor} and \eqref{eq:equivariant-EW-1-xi-W-x0}, and the last from the definition \eqref{eq:equivariant-EW-1-action-def} of the operation $\bullet_{\xi}$.
\end{proof}

As stated prior to Claim~\ref{claim:equivariant-EW-2}, we now prove:

\begin{claim}
  \label{claim:equivariant-EW-3}
  For $X \in \Mod^H$ and $W \in \Mod^C$, the map $\xi_{X,W}$ is induced by
  \begin{equation}
    \label{eq:equivariant-EW-1}
    X \otimes W \otimes M
    \to X \otimes W \otimes M, \quad
    x \otimes w \otimes m
    \mapsto x_{(0)} \otimes w \otimes (x_{(1)} \bullet_{\xi} m).
  \end{equation}
\end{claim}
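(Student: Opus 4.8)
The plan is to reduce the general statement to the single ``universal'' identity
\begin{equation*}
  \xi_{H,C}(h \otimes m_{(-1)} \otimes m_{(0)})
  = h_{(1)} \otimes m_{(-1)} \otimes (h_{(2)} \bullet_{\xi} m_{(0)})
  \quad (h \in H,\ m \in M),
\end{equation*}
which I will call $(\star)$ and which refines Claim~\ref{claim:equivariant-EW-2} by retaining the middle tensor factor that is there killed by $\varepsilon_C$. Granting $(\star)$, I would first pass from $X = H$ to an arbitrary $X \in \Mod^H$. The coaction $\coactr_X : X \to X_0 \otimes H$ is a morphism in $\Mod^H$ by coassociativity, so naturality of $\xi$ in the first variable relates $T_M(\coactr_X \otimes \id_C) \circ \xi_{X,C}$ to $\xi_{X_0 \otimes H, C} \circ (\coactr_X \otimes \id)$. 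Using the axiom~\eqref{eq:def-lax-module-functor} together with \eqref{eq:equivariant-EW-1-xi-W-x0} gives $\xi_{X_0 \otimes H, C} = \id_{X_0} \otimes \xi_{H,C}$, after which $(\star)$ and the coassociativity of $\coactr_X$ compute the right-hand side explicitly. Since $(\id_X \otimes \varepsilon_H)\coactr_X = \id_X$, the map $\coactr_X$ is a split monomorphism, so $T_M(\coactr_X \otimes \id_C)$ is injective, and comparing images identifies $\xi_{X,C}(x \otimes m_{(-1)} \otimes m_{(0)})$ with $x_{(0)} \otimes m_{(-1)} \otimes (x_{(1)} \bullet_{\xi} m_{(0)})$. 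A second, entirely analogous use of naturality in the second variable along $\coactr_W : W \to W_0 \otimes C$ and the inclusions $\iota_w : C \to W_0 \otimes C$, $c \mapsto w \otimes c$ (both morphisms in $\Mod^C$), together with the injectivity of $T_M(\id_X \otimes \coactr_W)$ and the cotensor relation $w_{(0)} \otimes w_{(1)} \otimes m = w \otimes m_{(-1)} \otimes m_{(0)}$, would then upgrade this to arbitrary $W$ and yield the asserted formula~\eqref{eq:equivariant-EW-1}.

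The heart of the argument is thus the proof of $(\star)$, and here the plan is to exploit the convolution endomorphisms of the regular comodule $C$. For $\psi \in C^*$ the map $g_{\psi} : C \to C$, $c \mapsto \psi(c_{(1)}) c_{(2)}$, is a morphism in $\Mod^C$, and one checks that $T_M(g_{\psi})$ sends $\coactl_M(m)$ to $\coactl_M(m \cdot \psi)$, where $m \cdot \psi := \psi(m_{(-1)}) m_{(0)}$. Writing $\xi_{H,C}(h \otimes m_{(-1)} \otimes m_{(0)}) = \sum_i h^i \otimes c^i \otimes m^i$ and applying naturality of $\xi$ in the second variable along $g_{\psi}$, followed by $\id_H \otimes \varepsilon_C \otimes \id_M$ and Claim~\ref{claim:equivariant-EW-2} (now used at the element $m \cdot \psi$), I would obtain
\begin{equation*}
  \sum_i \psi(c^i)\, h^i \otimes m^i
  = \psi(m_{(-1)})\, h_{(1)} \otimes (h_{(2)} \bullet_{\xi} m_{(0)})
  \quad \text{for all } \psi \in C^*.
\end{equation*}
Since elements of $C \otimes V$ are separated by the functionals $\psi \otimes \id_V$, this family of identities in $H \otimes M$, indexed by $\psi \in C^*$, determines the full tensor $\sum_i c^i \otimes h^i \otimes m^i = m_{(-1)} \otimes h_{(1)} \otimes (h_{(2)} \bullet_{\xi} m_{(0)})$ in $C \otimes H \otimes M$, which is exactly $(\star)$ after reordering the factors.

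The step I expect to be the main obstacle is precisely this recovery of the full cotensor element in $(\star)$ from the $\varepsilon_C$-collapsed information supplied by Claim~\ref{claim:equivariant-EW-2}. The naive attempt to undo the counit uses the ``diagonal twist'' $x \otimes c \mapsto x_{(0)} \otimes x_{(1)} c$, which is invertible only when $H$ has an antipode; since $H$ is merely a bialgebra, this route is unavailable. The point of the convolution endomorphisms $g_{\psi}$ is that they expose the $C$-leg of $\xi_{H,C}$ through naturality rather than inverting anything on $H$, so that the separating pairing with $C^*$ does the work of reconstruction. Once $(\star)$ is in hand, the two reductions above are routine bookkeeping with Sweedler notation, the counit axioms, and the injectivity of the coaction maps.
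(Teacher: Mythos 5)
Your proposal is correct, and its overall architecture matches the paper's: both first establish the refined identity $(\star)$, namely $\xi_{H,C}(h \otimes m_{(-1)} \otimes m_{(0)}) = h_{(1)} \otimes m_{(-1)} \otimes (h_{(2)} \bullet_{\xi} m_{(0)})$, and then propagate it to arbitrary $(X,W)$ by naturality along embeddings into (co)free comodules — the paper does this in one sentence, while your version via $\coactr_X : X \to X_0 \otimes H$ and $\coactr_W : W \to W_0 \otimes C$ spells out the same reduction. Where you genuinely diverge is in the extraction of $(\star)$ from the $\varepsilon_C$-collapsed data of Claim~\ref{claim:equivariant-EW-2}. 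The paper duplicates the $C$-leg: using \eqref{eq:def-lax-module-functor} and \eqref{eq:equivariant-EW-1-xi-W-x0} it identifies $\xi_{H, C_0 \otimes C}$ with $\xi_{H \otimes C_0, C}$, moves the inert factor $C_0$ past $H$ by naturality along the flip (a morphism in $\Mod^H$ since $C_0$ carries the trivial coaction), evaluates via \eqref{eq:equivariant-EW-1-proof-1}, and then pulls back along $\Delta_C : C \to C_0 \otimes C$ with the counit axiom. You instead probe the $C$-leg with the right $C$-comodule endomorphisms $g_{\psi} = (\psi \otimes \id_C)\Delta_C$ for $\psi \in C^*$ and reconstruct the full tensor by separation of points in $C \otimes (H \otimes M)$; your verifications that $g_{\psi}$ is $C$-colinear, that $T_M(g_\psi)$ realizes $m \mapsto m\cdot\psi$ on $C \coten_C M = \Img(\coactl_M)$, and that the resulting family of identities over $\psi \in C^*$ determines $(\star)$ are all sound, and the argument nowhere needs an antipode. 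Since $g_{\psi}$ is exactly $\Delta_C$ followed by evaluating $\psi$ on the $C_0$-slot, the two computations carry the same information: the paper's version buys a single universal identity at the cost of the copower-and-flip bookkeeping, while yours trades that for a pointwise separation argument over $C^*$. Both are valid proofs of the claim.
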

\begin{proof}
  We first verify this for $(X, W) = (H, C)$.
  By~\eqref{eq:def-lax-module-functor} and \eqref{eq:equivariant-EW-1-xi-W-x0}, we have
  \begin{equation}
    \label{eq:equivariant-EW-3-proof-1}
    \xi_{H, C_0 \otimes C} = \xi_{H, C_0 \otimes C} \circ (\id_H \otimes \xi_{C_0, C})
    = \xi_{H \otimes C_0, C}.
  \end{equation}
  Given vector spaces $X$ and $Y$, we denote by $\tau_{X,Y} : X \otimes Y \to Y \otimes X$ the linear map defined by flipping the first and the second tensor component.
  Since $\tau_{H, C_0}$ is a morphism in $\Mod^H$, we have
  \begin{align*}
    \xi_{H, C_0 \otimes C}
    & = (\tau_{C_0,H} \tau_{H, C_0} \otimes \id_C \otimes \id_M) \circ \xi_{H \otimes C_0, C} \\
    & = (\tau_{C_0,H} \otimes \id_C \otimes \id_M) \circ \xi_{C_0 \otimes H, C}
      \circ (\tau_{H,C_0} \otimes \id_C \otimes \id_M) \\
    & = (\tau_{C_0,H} \otimes \id_C \otimes \id_M) \circ (\id_{C_0} \otimes \xi_{H, C})
      \circ (\tau_{H,C_0} \otimes \id_C \otimes \id_M),
  \end{align*}
  where the first equality follows from \eqref{eq:equivariant-EW-3-proof-1}, the second from the naturality of $\xi$, and the last from \eqref{eq:def-lax-module-functor} and \eqref{eq:equivariant-EW-1-xi-W-x0}. Hence, by \eqref{eq:equivariant-EW-1-proof-1}, we have
  \begin{align*}
    & (\id_{H} \otimes \id_{C_0} \otimes \varepsilon_C \otimes \id_M)
      \xi_{H, C_0 \otimes C}(h \otimes c \otimes m_{(-1)} \otimes m_{(0)}) \\
    & = (\tau_{C_0,H} \otimes \varepsilon_C \otimes \id_M)
      (\id_{C_0} \otimes \xi_{H, C})
      (c \otimes h \otimes m_{(-1)} \otimes m_{(0)}) \\
    & = h_{(1)} \otimes c \otimes (h_{(2)} \bullet_{\xi} m)
  \end{align*}
  for $h \in H$, $c \in C_0$ ($=C$) and $m \in M$.
  Noting that $\Delta_C : C \to C_0 \otimes C$ is a morphism in $\Mod^C$, we verify the claim for $(X, W) = (H, C)$ as follows:
  \begin{align*}
    & \xi_{H, C}(h \otimes m_{(-1)} \otimes m_{(0)}) \\
    & = (\id_H \otimes \id_{C_0} \otimes \varepsilon_C \otimes \id_M)
      (\id_H \otimes \Delta_C \otimes \id_M)
      \xi_{H, C}(h \otimes m_{(-1)} \otimes m_{(0)}) \\
    & = (\id_H \otimes \id_{C_0} \otimes \varepsilon_C \otimes \id_M)
      \xi_{H, C_0 \otimes C}(h \otimes m_{(-2)} \otimes m_{(-1)} \otimes m_{(0)}) \\
    & = h_{(1)} \otimes m_{(-1)} \otimes (h_{(2)} \bullet_{\xi} m_{(0)}).
  \end{align*}

  Now we consider the general case.
  By the naturality of $\xi$, the claim is also true if $W$ and $X$ are free comodules over $C$ and $H$, respectively. Since every comodule can be embedded into a free comodule, the claim is true for all objects $W \in \Mod^C$ and $X \in \Mod^H$.
\end{proof}

\begin{proof}[Proof of Lemma~\ref{lem:equivariant-EW-2}]
  Although the computation is a bit technical, the proof is done just by translating the assumption that $T_M$ is a lax module functor with structure morphism $\xi$ in terms of the operation $\bullet_{\xi}$.
  We first verify that $\bullet_{\xi}$ is associative.
  For $h, h' \in H$ and $m \in M$, we have
  \begin{align*}
    & \xi_{H \otimes H, C}(h \otimes h' \otimes m_{(-1)} \otimes m_{(0)}) \\
    & = (h \otimes h')_{(0)} \otimes m_{(-1)} \otimes ((h \otimes h')_{(1)} \bullet_{\xi} m_{(0)}) \\
    & = h_{(1)} \otimes h'_{(1)} \otimes m_{(-1)} \otimes ((h_{(2)} h'_{(2)}) \bullet_{\xi} m_{(0)})
  \end{align*}
  by Claim~\ref{claim:equivariant-EW-3}. By \eqref{eq:def-lax-module-functor}, we also compute
  \begin{align*}
    & \xi_{H \otimes H, C}(h \otimes h' \otimes m_{(-1)} \otimes m_{(0)}) \\
    & = \xi_{H, H \otimes C} (\id_H \otimes \xi_{H, C}) (h \otimes h' \otimes m_{(-1)} \otimes m_{(0)}) \\
    & = \xi_{H, H \otimes C} (h \otimes h'_{(1)} \otimes m_{(-1)} \otimes (h'_{(2)} \bullet_{\xi} m_{(0)})) \\
    & = h_{(1)} \otimes h'_{(1)} \otimes m_{(-1)} \otimes (h_{(2)} \bullet_{\xi} (h'_{(2)} \bullet_{\xi} m_{(0)})).
  \end{align*}
  Thus we have obtained
  \begin{gather*}
    h_{(1)} \otimes h'_{(1)} \otimes m_{(-1)} \otimes ((h_{(2)} h'_{(2)}) \bullet_{\xi} m_{(0)}) \\
    = h_{(1)} \otimes h'_{(1)} \otimes m_{(-1)} \otimes (h_{(2)} \bullet_{\xi} (h'_{(2)} \bullet_{\xi} m_{(0)})).
  \end{gather*}
  By applying $\varepsilon_H \otimes \varepsilon_H \otimes \varepsilon_C \otimes \id_M$ to the both sides, we obtain
  \begin{equation*}
    (h h') \bullet_{\xi} m = h \bullet_{\xi} (h' \bullet_{\xi} m),
  \end{equation*}
  that is, $\bullet_{\xi}$ is associative.

  To show that the action $\bullet_{\xi}$ is unital, we consider the linear map $u: \bfk \to H$ defined by $u(1_{\bfk}) = 1_H$. If $\bfk$ is viewed as a right $H$-comodule by the coaction $1_{\bfk} \mapsto 1_{\bfk} \otimes 1_H$, then $u$ is a morphism in $\Mod^H$. We fix an element $m \in M$. By Claim \ref{claim:equivariant-EW-3} and the axiom \eqref{eq:def-lax-module-functor} of lax module functors, we have
  \begin{gather*}
    1_H \otimes m_{(-1)} \otimes (1_H \bullet_{\xi} m_{(0)})
    = \xi_{H,C}(1_H \otimes m_{(-1)} \otimes m_{(0)}) \\
    = (u \otimes \id_C \otimes \id_M) \xi_{\bfk, C}(1_{\bfk} \otimes m_{(-1)} \otimes m_{(0)})
    = 1_H \otimes m_{(-1)} \otimes m_{(0)},
  \end{gather*}
  which implies $1_{H} \bullet_{\xi} m = m$.

  Finally, we check the compatibility condition \eqref{eq:C-H-Mod-D-axiom} between the action $\bullet_{\xi}$ and the coactions on $M$. We fix elements $h \in H$ and $m \in M$.
  Since the map $\xi_{H,C}$ is a morphism in $\Mod^D$, we have
  \begin{align*}
    & h_{(1)} \otimes m_{(-1)} \otimes \coactr_{M} (h_{(2)} \bullet_{\xi} m_{(0)}) \\
    & = \coactr_{(H \otimes W) \coten_C M} (h_{(1)} \otimes m_{(-1)} \otimes (h_{(2)} \bullet_{\xi} m_{(0)})) \\
    & = \coactr_{(H \otimes W) \coten_C M} \xi_{H,C}(h \otimes m_{(-1)} \otimes m_{(0)}) \\
    & = (\xi_{H,C} \otimes \id_{D}) \coactr_{H \otimes (W \coten_C M)}(h \otimes m_{(-1)} \otimes m_{(0)}) \\
    & = \xi_{H,C}(h_{(1)} \otimes m_{(-1)} \otimes m_{(0)}) \otimes h_{(2)} m_{(1)},
  \end{align*}
  where the first and the last equality follow from the definition of the coaction on $(H \otimes W) \coten_C M$ and $H \otimes (W \coten_C M)$, respectively.
  By applying $\varepsilon_H \otimes \varepsilon_C \otimes \id_{M} \otimes \id_D$ to the both sides, we obtain
  \begin{equation*}
    \coactr_{M}(h \bullet_{\xi} m)
    = (h_{(1)} \bullet_{\xi} m_{(0)}) \otimes h_{(2)} m_{(1)}.
  \end{equation*}
  Since the image of $\xi_{H,C}$ is contained in $(H \otimes C) \coten_C M$, we also have
  \begin{align*}
    & h_{(1)} \otimes m_{(-1)} \otimes \coactl_M(h_{(2)} \bullet_{\xi} m) \\
    & = (\id_H \otimes \id_C \otimes \coactl_M) \xi_{H,C}(h \otimes m_{(-1)} \otimes m_{(0)}) \\
    & = (\coactr_{H \otimes C} \otimes \id_M) \xi_{H,C}(h \otimes m_{(-1)} \otimes m_{(0)}) \\
    & = h_{(1)} \otimes m_{(-2)} \otimes h_{(2)} m_{(-1)} \otimes (h_{(3)} \bullet_{\xi} m_{(0)}).
  \end{align*}
  By applying $\varepsilon_H \otimes \varepsilon_C \otimes \id_C \otimes \id_M$ to the both sides, we obtain
  \begin{equation*}
    \coactl_{M}(h \bullet_{\xi} m)
    = h_{(1)} m_{(-1)} \otimes (h_{(2)} \bullet_{\xi} m_{(1)}). \qedhere
  \end{equation*}
\end{proof}

\begin{proof}[Proof of Theorem~\ref{thm:equivariant-EW}]
  Let $M$ be an object of ${}_H^C\Mod^D$ with left $H$-action $\bullet$, and write $\zeta = \xi^{\bullet}$. Then the action $\bullet_{\zeta}$ is given by  
  \begin{align*}
    h \bullet_{\zeta} m
    & = (\varepsilon_H \otimes \varepsilon_C \otimes \id_M)
      \zeta_{H, C}(h \otimes m_{(-1)} \otimes m_{(0)}) \\
    & = (\varepsilon_H \otimes \varepsilon_C \otimes \id_M)
      (h_{(1)} \otimes m_{(-1)} \otimes h_{(2)} \bullet m_{(0)})
      = h \bullet m
  \end{align*}
  for $h \in H$ and $m \in M$.
  Thus, together with Claim~\ref{claim:equivariant-EW-3}, we conclude that the map $\bullet \mapsto \xi^{\bullet}$ and $\xi \mapsto \bullet_{\xi}$ give bijections between the sets (1) and (2) of Theorem~\ref{thm:equivariant-EW}.

  Now let $M$ and $N$ be objects of ${}^C_H\Mod^D$ and let $f: M \to N$ be a morphism of $C$-$D$-bicomodules. One can verify that $f$ is a morphism in ${}^C_H\Mod^D$ if and only if the natural transformation $T_M \to T_N$ induced by $f$ is a morphism of lax $\Mod^H$-module functors. The proof is completed.
\end{proof}

\subsection{Variants of Theorem~\ref{thm:equivariant-EW}}
\label{subsec:equivariant-EW-variants}
Let $K$ also be a bialgebra, and let $C$ and $D$ be right $K$-module coalgebras.
Then the categories $\Mod^C$ and $\Mod^D$ are right module categories over $\Mod^K$.
If $M \in {}^C\Mod^D_K$ ($:=$ the category of $C$-$D$-bicomodules in the monoidal category $\Mod_K$ of right $K$-modules), then the functor $T_M$ has a structure of a right $\Mod^K$-module functor induced by the linear map
\begin{equation*}
  W \otimes M \otimes X \to W \otimes X \otimes M,
  \quad w \otimes m \otimes x \mapsto
  w \otimes x_{(0)} \otimes m x_{(1)},
\end{equation*}
where $w \in W \in \Mod^C$, $x \in X \in \Mod^K$ and $m \in M$.
By applying Theorem~\ref{thm:equivariant-EW} to $H = K^{\op}$, we see that the assignment $M \mapsto T_M$ gives an equivalence between ${}^C\Mod^D_K$ the category of lax right $\Mod^K$-module functors from $\Mod^C$ to $\Mod^D$ whose underlying functor is of type $\mathscr{L}$.

Given bialgebras $H$ and $K$, the category ${}_H\Mod_K$ of $H$-$K$-bimodules is a monoidal category with the monoidal product inherited from ${}_H\Mod$ and $\Mod_K$.
Theorem~\ref{thm:equivariant-EW} has the following bimodule version:
If $C$ and $D$ are coalgebras in ${}_H\Mod_K$, then $\Mod^C$ and $\Mod^D$ are $\Mod^H$-$\Mod^K$-bimodule categories. The assignment $M \mapsto T_M$ gives an equivalence between the category ${}^C_H\Mod^D_K$ of $C$-$D$-bicomodules in ${}_H\Mod_K$ and the category of lax $\Mod^H$-$\Mod^K$-bimodule functors from $\Mod^C$ to $\Mod^D$ whose underlying functor is of type $\mathscr{L}$.

\section{Applications to Hopf and Yetter-Drinfeld modules}
\label{sec:applications-Hopf-YD}

\subsection{When is a lax module functor strong?}

In this section, we explain how known results on Hopf modules and Yetter-Drinfeld modules are deduced from our equivariant Eilenberg-Watts theorem. Let $H$ be a bialgebra, and let $C$ and $D$ be left $H$-module coalgebras. A central role is played by the inclusion functor
\begin{equation*}
  i_{C, D}:
  \mathscr{L}_{\Mod^H}^{\strong}(\Mod^C, \Mod^D)
  \to \mathscr{L}_{\Mod^H}^{\lax}(\Mod^C, \Mod^D),
\end{equation*}
where the source is the full subcategory of strong module functors. The existence of an antipode of $H$ is closely related to when the inclusion functor $i_{C,D}$ is an equivalence, or when the equation `lax = strong' holds. More precisely, we have:

\begin{theorem}
  \label{thm:equivalence-iota}
  For a bialgebra $H$, the following are equivalent:
  \begin{enumerate}
  \item $H$ is a Hopf algebra, that is, it has an antipode.
  \item $i_{C,D}$ is an equivalence for any left $H$-module coalgebras $C$ and $D$.
  \item There exist left $H$-module coalgebras $C$ and $D$ such that $i_{C,D}$ is an equivalence, $C$ is non-zero, and the vector space $C \otimes H$ is an object of ${}^C_H\Mod^D$ together with left $C$-coaction $\Delta_C \otimes \id_H$ and the diagonal left $H$-action.
  \end{enumerate}
\end{theorem}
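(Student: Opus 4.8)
The plan is to prove $(1)\Rightarrow(2)\Rightarrow(3)\Rightarrow(1)$, reducing each implication to the invertibility of the structure morphism $\xi^{\bullet}$ attached to a bicomodule in Lemma~\ref{lem:equivariant-EW-1}. I begin with a reformulation of when $i_{C,D}$ is an equivalence. As the inclusion of a full subcategory, $i_{C,D}$ is fully faithful, so it is an equivalence iff it is essentially surjective. By Theorem~\ref{thm:equivariant-EW} every object of the target is isomorphic to $T_M$ for some $M\in{}^C_H\Mod^D$; and for a lax-module isomorphism $\alpha\colon T_M\to G$ the coherence axiom gives, componentwise, $\xi^{\bullet}_{X,W}=\alpha_{X\otimes W}^{-1}\circ\xi^{G}_{X,W}\circ(\id_X\otimes\alpha_W)$, so $\xi^{\bullet}$ is invertible whenever $G$ is strong; conversely, invertibility of $\xi^{\bullet}$ makes $T_M$ itself strong. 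Hence $i_{C,D}$ is an equivalence iff the structure morphism $\xi^{\bullet}$ of $T_M$ is invertible for every $M\in{}^C_H\Mod^D$.

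For $(1)\Rightarrow(2)$, let $S$ be the antipode of $H$. For any $M\in{}^C_H\Mod^D$ with action $\bullet$ I would write down the inverse of $\xi^{\bullet}_{X,W}$ as the map induced by $x\otimes w\otimes m\mapsto x_{(0)}\otimes w\otimes(S(x_{(1)})\bullet m)$. Both composites with $\xi^{\bullet}_{X,W}$ collapse to the identity via comodule coassociativity, the antipode identities $S(x_{(1)})x_{(2)}=\varepsilon(x_{(1)})1=x_{(1)}S(x_{(2)})$, and unitality of $\bullet$; the one routine verification is that this candidate sends $(X\otimes W)\coten_C M$ into $X\otimes(W\coten_C M)$, checked exactly as the well-definedness of $\xi^{\bullet}$ in Lemma~\ref{lem:equivariant-EW-1}. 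By the reformulation, $i_{C,D}$ is then an equivalence for all $C,D$.

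For $(2)\Rightarrow(3)$ I only need a witness. A bialgebra over $\bfk$ is automatically nonzero, as $\varepsilon(1_H)=1_{\bfk}\neq0$, so nonzero module coalgebras exist. I take $C$ to be any nonzero left $H$-module coalgebra (e.g.\ $C=\bfk$ with $H$ acting through $\varepsilon$, or $C=H$) and $D=H$ with its left regular action and comultiplication. Equip $C\otimes H$ with the left $C$-coaction $\Delta_C\otimes\id_H$, the right $H$-coaction $\id_C\otimes\Delta_H$, and the diagonal left $H$-action; a direct check of \eqref{eq:C-H-Mod-D-axiom} shows $C\otimes H\in{}^C_H\Mod^D$. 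Since (2) provides that $i_{C,D}$ is an equivalence, these $C,D$ meet every requirement of (3).

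The substantive direction is $(3)\Rightarrow(1)$. Put $N=C\otimes H$ with the data of (3). Using only the prescribed left $C$-coaction $\Delta_C\otimes\id_H$, there is a natural isomorphism $W\coten_C N\cong W\otimes H$, namely $\id_W\otimes\varepsilon_C\otimes\id_H$ with inverse $w\otimes h\mapsto w_{(0)}\otimes w_{(1)}\otimes h$. The heart of the argument is to transport $\xi^{\bullet}_{X,W}$ across this isomorphism: using $H$-linearity of $\varepsilon_C$, one computes that it becomes $x\otimes w\otimes h\mapsto x_{(0)}\otimes w\otimes x_{(1)}h$, which for the regular comodule $X=H$ is the fusion map $R\otimes\id_W$ with $R(g\otimes h)=g_{(1)}\otimes g_{(2)}h$. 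By the reformulation $\xi^{\bullet}$ of $T_N$ is invertible; since invertibility of a $D$-comodule morphism is that of its underlying linear map and $C\neq0$, taking $X=H$ and $W=C$ forces $R$ to be bijective. The proof finishes with the standard criterion that a bialgebra is a Hopf algebra precisely when $R$ is bijective, the antipode being recovered as $S(g)=(\varepsilon_H\otimes\id_H)R^{-1}(g\otimes 1_H)$. The main obstacle is exactly this transport computation — seeing that the right $D$-coaction drops out and only the specified left $C$-coaction and diagonal action survive, and recognizing the resulting map as the fusion map — after which the Hopf criterion closes the cycle.
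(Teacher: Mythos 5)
Your proposal is correct and follows essentially the same route as the paper: the antipode-twisted inverse $x\otimes w\otimes m\mapsto x_{(0)}\otimes w\otimes(S(x_{(1)})\bullet m)$ for (1)$\Rightarrow$(2), a witness of the form $C\otimes H$ with $D=H$ for (2)$\Rightarrow$(3), and for (3)$\Rightarrow$(1) the transport of $\xi^{\bullet}$ along $W\coten_C(C\otimes H)\cong W\otimes H$ to the fusion map $g\otimes h\mapsto g_{(1)}\otimes g_{(2)}h$, concluded by the standard criterion that bijectivity of this map characterizes Hopf algebras. Your explicit reformulation of ``$i_{C,D}$ is an equivalence'' as invertibility of $\xi^{\bullet}$ for every $M\in{}^C_H\Mod^D$ is exactly the reduction the paper uses implicitly.
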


We add a remark on Condition (3). Let $C$ and $D$ be non-zero left $H$-module coalgebras. Then the vector space $C \otimes H$ becomes an object of ${}^C_H\Mod^D$ in the manner specified in Condition (3) if there is a homomorphism $\pi : H \to D$ of left $H$-module coalgebras. Indeed, equip it with the right $D$-coaction $\id_C \otimes (\id_H \otimes \pi)\Delta_H$. For example, the pairs $(C, D) = (H, H)$ and $(C, D) = (H, \bfk)$ satisfy Condition (3) except the part that $i_{C,D}$ is an equivalence.

\begin{proof}
  We first show that (1) implies (2). We assume that $H$ has an antipode. 
  It suffices to show that every object $F \in \mathscr{L}_{\Mod^H}^{\lax}(\Mod^C, \Mod^D)$ is a strong module functor.
  By Theorem~\ref{thm:equivariant-EW}, we may assume that $F = \Phi_{C,D}(M)$ for some $M \in {}^C_H\Mod^D$ and the structure morphism $\xi$ of $F$ is induced by \eqref{eq:T-M-structure-morphism}. Now we define
  \begin{equation*}
    \overline{\xi}_{X,W} : (X \otimes W) \coten_C M \to X \otimes (W \coten_C M)
    \quad (X \in \Mod^H, W \in \Mod^C)
  \end{equation*}
  to be the linear map induced by the linear map
  \begin{align*}
      (X \otimes W) \otimes M
      & \to X \otimes (W \otimes M), \\
      (x \otimes w) \otimes m
      & \mapsto x_{(0)} \otimes (w \otimes S(x_{(1)}) m)
      \quad (x \in X, w \in W, m \in M),
  \end{align*}
  where $S$ is the antipode of $H$. The well-definedness of $\overline{\xi}_{X,W}$ is explained as follows: Let $t$ be an element of the source of $\overline{\xi}_{X,M}$. Although $t$ may not be a simple tensor, we write it as $t = x \otimes w \otimes m$ for brevity. As it belongs to the cotensor product, we have $x_{(0)} \otimes w_{(0)} \otimes x_{(1)} w_{(1)} \otimes m = x \otimes w \otimes m_{(-1)} \otimes m_{(0)}$. By applying the coaction of $C$ on $X$ to the both sides, we obtain
  \begin{equation*}
    x_{(0)} \otimes x_{(1)} \otimes w_{(0)} \otimes x_{(2)} w_{(1)} \otimes m = x_{(0)} \otimes x_{(1)} \otimes w \otimes m_{(-1)} \otimes m_{(0)}.
  \end{equation*}
  Now we use this to verify $\overline{\xi}_{X,W}(t) \in X \otimes (W \coten_C M)$ as follows:
  \begin{align*}
    & x_{(0)} \otimes (w \otimes (S(x_{(1)}) m)_{(-1)} \otimes (S(x_{(1)}) m)_{(0)}) \\
    & = x_{(0)} \otimes (w \otimes S(x_{(1)})_{(1)} m_{(-1)} \otimes S(x_{(1)})_{(2)} m_{(0)}) \\
    & = x_{(0)} \otimes (w_{(0)} \otimes S(x_{(1)})_{(1)} x_{(2)} w_{(1)} \otimes S(x_{(1)})_{(2)} m) \\
    & = x_{(0)} \otimes (w_{(0)} \otimes S(x_{(2)}) x_{(3)} w_{(1)} \otimes S(x_{(1)}) m) \\
    & = x_{(0)} \otimes (w_{(0)} \otimes w_{(-1)} \otimes S(x_{(1)}) m).
  \end{align*}
  Once we know that $\overline{\xi}_{X,W}$ is a well-defined linear map, it is straightforward to check that $\xi_{X,W}$ and $\overline{\xi}_{X,W}$ are mutually inverse. We have proved (1) $\Rightarrow$ (2).

  The implication (2) $\Rightarrow$ (3) is obvious. It remains to show (3) $\Rightarrow$ (1). We assume that (3) holds and let $C$ and $D$ be as in the assertion (3). We consider the lax $\Mod^H$-module functor $T_M : \Mod^C \to \Mod^D$ induced by the object $M := C \otimes H \in {}_H^C\Mod^D$. The structure morphism of $T_M$, which we denote by
  \begin{equation*}
    \xi_{X,W} : X \otimes T_M(W) \to T_M(X \otimes W) \quad (X \in \Mod^H, W \in \Mod^C),
  \end{equation*}
  is induced by the linear map $X \otimes W \otimes C \otimes H \to X \otimes W \otimes C \otimes H$ given by
  \begin{gather*}
    x \otimes w \otimes c \otimes h \mapsto x_{(0)} \otimes w \otimes x_{(1)} c \otimes x_{(2)} h
  \end{gather*}
  for $x \in X$, $w \in W$, $c \in C$ and $h \in H$. Since $M$ is free as a left $C$-comodule, we have the following natural isomorphism of vector spaces:
  \begin{equation*}
    \phi_{W} : W \otimes H \to T_M(W) = W \coten_C M,
    \quad w \otimes h \mapsto w_{(0)} \otimes w_{(1)} \otimes h
    \quad (W \in \Mod^C).
  \end{equation*}
  Since the functor $i_{C,D}$ is an equivalence by our assumption, the map $\xi_{X,W}$ is invertible for all $X \in \Mod^H$ and $W \in \Mod^C$. Hence the map
  \begin{equation*}
    \beta_{X,W} := \phi_{X \otimes W}^{-1} \circ \xi_{X,W} \circ (\id_X \otimes \phi_W) : X \otimes W \otimes H \to X \otimes W \otimes H
  \end{equation*}
  is also invertible. One easily has
  \begin{equation*}
    \beta_{H,C}(h \otimes c \otimes h')
    = h_{(1)} \otimes c \otimes h_{(2)} h'
    \quad (c \in C, h, h' \in H).
  \end{equation*}
  Since $C \ne 0$ by our assumption, the linear map
  \begin{equation*}
    \beta : H \otimes H \to H \otimes H, \quad h \otimes h' \mapsto h_{(1)} \otimes h_{(2)} h'
    \quad (h, h' \in H)
  \end{equation*}
  is invertible. A well-known argument (see, {\it e.g.} \cite[Proposition 1.2.19]{MR4164719}) shows that the linear map $h \mapsto (\varepsilon \otimes \id_H) \beta^{-1}(h \otimes 1_H)$ ($h \in H$) is an antipode of $H$. The proof is done.
\end{proof}

\subsection{The fundamental theorem for Hopf modules}

Let $H$ be a bialgebra. We discuss {\em the fundamental theorem for Hopf modules} \cite[Theorem 4.4.6]{MR1786197}, which states that there is a category equivalence $\Vect \approx {}_H^H\Mod$ if $H$ is a Hopf algebra.
For a left $H$-comodule algebra $D$, we introduce the functor
\begin{equation*}
  \Psi_D : \Mod^D \to {}^H_H \Mod^D, \quad \Psi_D(W) = H \otimes W \quad (W \in \Mod^D),
\end{equation*}
where the left coaction of $H$, the right coaction of $D$ and the left action of $H$ on the vector space $\Psi_D(W)$ are given by
\begin{equation*}
  \coactl_{\Psi_D(W)} = \Delta_H \otimes \id_W, \quad
  \coactr_{\Psi_D(W)}(h \otimes w) = h_{(1)} \otimes w_{(0)} \otimes h_{(2)} w_{(1)},
\end{equation*}
and $h \cdot (h' \otimes w) = h_{(1)} h' \otimes h_{(2)} w$, respectively, for $h, h' \in H$ and $w \in W$. Now we state the fundamental theorem for Hopf modules and related results as follows:

\begin{theorem}
  \label{thm:fundamental-theorem}
  For a bialgebra $H$, the following are equivalent:
  \begin{enumerate}
  \item $H$ is a Hopf algebra.
  \item The functor $\Psi_D$ is an equivalence for all left $H$-module coalgebras $D$.
  \item The functor $\Psi_{\bfk} : \Vect \to {}^H_H\Mod$ is an equivalence.
  \item The functor $\Psi_H : \Mod^H \to {}^H_H\Mod^H$ is an equivalence.
  \end{enumerate}
  If these equivalent conditions are satisfied, then we have
  \begin{equation}
    \label{eq:fundamental-theorem-q-inverse}
    \Psi_D^{-1}(M) = \{ m \in M \mid \coactl_M(m) = 1 \otimes m \}
    \quad (M \in {}^H_H\Mod^D),
  \end{equation}
  which is a right $D$-comodule as a $D$-subcomodule of $M$.
\end{theorem}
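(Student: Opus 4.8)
The plan is to identify $\Psi_D$, up to natural equivalence, with the inclusion functor $i_{H,D}$ of strong into lax module functors, and then to read off every implication from Theorem~\ref{thm:equivalence-iota}. Throughout I regard $H$ as a left $H$-module coalgebra via left multiplication, so that ${}^H_H\Mod^D$ is exactly the category occurring in the $C=H$ instance of Theorem~\ref{thm:equivariant-EW}, whose equivalence $M \mapsto T_M$ onto $\mathscr{L}^{\lax}_{\Mod^H}(\Mod^H,\Mod^D)$ I will exploit.

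First I would record the standard fact that the regular module category represents objects: evaluation at the unit gives an equivalence $\mathscr{L}^{\strong}_{\Mod^H}(\Mod^H,\Mod^D)\approx\Mod^D$, $F\mapsto F(\bfk)$, with quasi-inverse $J_D:W\mapsto F_W:=(-)\otimes W$, where $F_W$ is automatically of type $\mathscr{L}$ since on underlying vector spaces it is just $(-)\otimes_{\bfk}W$. The key step is then to show that, under the equivalence of Theorem~\ref{thm:equivariant-EW}, the strong module functor $F_W$ corresponds to the bicomodule $\Psi_D(W)$. Concretely I would verify three points: that the left $H$-coaction $\widehat{F_W}^{-1}\circ F_W(\Delta_H)$ on $F_W(H)=H\otimes W$ equals $\Delta_H\otimes\id_W$ (here $\widehat{F_W}$ is invertible by Lemma~\ref{lem:coproduct-preserving-implies-strong-Vec}) and that the right $D$-coaction and the $H$-action match those of $\Psi_D(W)$; that the cofree-comodule cotensor isomorphism $X\coten_H(H\otimes W)\cong X\otimes W$ already used in the proof of Theorem~\ref{thm:equivalence-iota} yields a natural isomorphism $\psi:T_{\Psi_D(W)}\cong F_W$; and that $\psi$ intertwines the lax structure $\xi^{\bullet}$ of $T_{\Psi_D(W)}$ coming from the $H$-action with the strong (associativity) structure of $F_W$. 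Together these give $T_{\Psi_D(W)}\cong F_W$ as lax module functors, so $\Psi_D\cong (M\mapsto T_M)^{-1}\circ i_{H,D}\circ J_D$.

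Since $J_D$ and the equivalence of Theorem~\ref{thm:equivariant-EW} are equivalences, this factorization shows that $\Psi_D$ is an equivalence precisely when $i_{H,D}$ is, and the cycle of implications follows at once. For (1)$\Rightarrow$(2), Theorem~\ref{thm:equivalence-iota} makes $i_{H,D}$ an equivalence for every left $H$-module coalgebra $D$ once $H$ has an antipode, hence $\Psi_D$ is an equivalence; the implications (2)$\Rightarrow$(3) and (2)$\Rightarrow$(4) are the specializations $D=\bfk$ and $D=H$. For (3)$\Rightarrow$(1) I would observe that $\Psi_{\bfk}$ being an equivalence forces $i_{H,\bfk}$ to be an equivalence, and that the counit $\varepsilon_H:H\to\bfk$ is a morphism of left $H$-module coalgebras; by the remark following Theorem~\ref{thm:equivalence-iota} the pair $(C,D)=(H,\bfk)$ then satisfies condition~(3) of that theorem (with $C=H\neq 0$), so $H$ is a Hopf algebra. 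The implication (4)$\Rightarrow$(1) is identical with $\pi=\id_H:H\to H$ in place of $\varepsilon_H$.

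Finally, to identify the quasi-inverse under these equivalent conditions, I would set $\Xi_D(M)=\{m\in M\mid \coactl_M(m)=1\otimes m\}$ and first check, using the bicomodule compatibility $(\id_H\otimes\coactr_M)\coactl_M=(\coactl_M\otimes\id_D)\coactr_M$, that $\Xi_D(M)$ is a right $D$-subcomodule, so that $\Xi_D:{}^H_H\Mod^D\to\Mod^D$ is a functor. A direct Sweedler computation with $\coactl=\Delta_H\otimes\id_W$ shows $\Xi_D(\Psi_D(W))=\bfk\,1_H\otimes W\cong W$ naturally in $W$, that is $\Xi_D\circ\Psi_D\cong\id_{\Mod^D}$; combined with the fact that $\Psi_D$ is an equivalence this gives $\Xi_D\cong\Xi_D\circ\Psi_D\circ\Psi_D^{-1}\cong\Psi_D^{-1}$, which is exactly formula~\eqref{eq:fundamental-theorem-q-inverse}. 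I expect the main obstacle to lie in the module-structure compatibility of the second paragraph: confirming that the $H$-action defining $\Psi_D(W)$ on $H\otimes W$ is precisely the one recovered from the trivial lax structure of $F_W$ through the correspondence~\eqref{eq:equivariant-EW-1-action-def}, which forces careful bookkeeping inside the cotensor $X\coten_H(H\otimes W)$ and pins down the coaction and action conventions against~\eqref{eq:T-M-structure-morphism}.
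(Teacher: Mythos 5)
Your proposal is correct and follows essentially the same route as the paper: both factor $\Psi_D$ as the composite of the equivalence $W \mapsto (-)\otimes W$, the inclusion $i_{H,D}$, and the (quasi-inverse of the) equivalence of Theorem~\ref{thm:equivariant-EW}, and then read off all four equivalences from Theorem~\ref{thm:equivalence-iota} using $\id_H$ and $\varepsilon_H$ as the module-coalgebra morphisms. The only cosmetic difference is at the end: the paper obtains \eqref{eq:fundamental-theorem-q-inverse} directly by computing the composite quasi-inverse as $M \mapsto \bfk \coten_H M$ and identifying this with the left $H$-coinvariants, whereas you verify that the coinvariants functor is a one-sided inverse and invoke that $\Psi_D$ is an equivalence; both are fine.
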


The implication (1) $\Rightarrow$ (3) is just the fundamental theorem of Hopf modules \cite[Theorem 4.4.6]{MR1786197}. While the authors were unable to locate a precise reference, it is well-known that the converse of the fundamental theorem holds true; see, {\it e.g.}, \cite[Introduction]{MR2037710}. The equivalence between (1) and (4) has been verified in a more general setting of quasi-bialgebras in \cite{MR2037710} and \cite{MR3004083}.
Thus, the most part of Theorem~\ref{thm:fundamental-theorem} is not new.
What we would like to emphasize here is that Theorem~\ref{thm:fundamental-theorem} is a consequence of Theorem \ref{thm:equivalence-iota}, which describes when the equation `lax = strong' holds.

\begin{proof}[Proof of Theorem~\ref{thm:fundamental-theorem}]
  It is easy to see that the functor
  \begin{equation*}
    t_D : \Mod^D \to \mathscr{L}_{\Mod^H}^{\strong}(\Mod^H, \Mod^D),
    \quad t_D(W) = (-) \otimes W
  \end{equation*}
  is an equivalence with a quasi-inverse given by $t_D^{-1}(F) = F(\bfk)$, where $\bfk$ is the trivial right $H$-comodule. We note that the composition
  \begin{equation}
    \label{eq:fundamental-theorem-functor}
    \Mod^D
    \xrightarrow[\approx]{\  t_D \ }
    \mathscr{L}_{\Mod^H}^{\strong}(\Mod^H, \Mod^D)
    \xrightarrow{\ i_{H,D} \ }
    \mathscr{L}_{\Mod^H}^{\lax}(\Mod^H, \Mod^D)
    \xrightarrow[\approx]{\ \Phi_{H,D}^{-1} \ }
    {}^H_H\Mod^D
  \end{equation}
  is isomorphic to the functor $\Psi_D$. Thus $\Psi_D$ is an equivalence if and only if $i_{H,D}$ is an equivalence. Now we see that (1), (2), (3) and (4) are equivalent by rephrasing Theorem~\ref{thm:equivalence-iota} with noting that $\id_H : H \to H$ and $\varepsilon_H : H \to \bfk$ are morphisms of left $H$-module coalgebras.

  Now we suppose that (1), (2), (3) and (4) hold. Since $\Psi_D$ is isomorphic to \eqref{eq:fundamental-theorem-functor}, a quasi-inverse of $\Psi_D$ is given by the composition
  \begin{equation*}
    {}^H_H\Mod^D
    \xrightarrow[\approx]{\ \Phi_{H,D} \ }
    \mathscr{L}_{\Mod^H}^{\lax}(\Mod^H, \Mod^D)
    \xrightarrow{\ i_{H,D}^{-1} \ }
    \mathscr{L}_{\Mod^H}^{\strong}(\Mod^H, \Mod^D)
    \xrightarrow[\approx]{\ t_D^{-1} \ }
    \Mod^D,
  \end{equation*}
  which sends $M \in {}^H_H\Mod^D$ to $\bfk \coten_H M$.
  Since $\bfk \coten_H M$ is identified with the space of left $H$-coinvariants of $M$, the formula~\eqref{eq:fundamental-theorem-q-inverse} of $\Psi_D^{-1}$ follows. The proof is done.
\end{proof}

\subsection{Equivalence for Yetter-Drinfeld categories}

\newcommand{\YD}{\mathscr{Y}\!\mathscr{D}}

We explain how known equivalences for Yetter-Drinfeld categories are naturally obtained from the equation `lax = strong' mentioned in the above.
Let $H$, $A$ and $C$ be a bialgebra, an $H$-bicomodule algebra, and an $H$-bimodule coalgebra, respectively. Then the category $\YD^C_A$ of Yetter-Drinfeld modules is defined as follows: An object of this category is a right $A$-module $M$ equipped with a right $C$-comodule structure such that the equation
\begin{equation}
  \label{eq:YD-condition}
  (m a_{(0)})_{(0)} \otimes a_{(-1)} \triangleright (m a_{(0)})_{(1)}
  = m_{(0)} a_{(0)} \otimes m_{(1)} \triangleleft a_{(1)}
  \quad (a \in A, m \in M)
\end{equation}
holds, where $\triangleright$ and $\triangleleft$ are the left and the right action of $H$ on $C$. A morphism in $\YD^C_A$ is a linear map preserving the action of $A$ and the coaction of $C$. If $H$ has an antipode, then the condition~\eqref{eq:YD-condition} is equivalent to
\begin{equation*}
  \coactr_M(m a)
  = m_{(0)} a_{(0)} \otimes S(a_{(-1)}) \triangleright m_{(1)} \triangleleft a_{(1)}
  \quad (a \in A, m \in M).
\end{equation*}

The category $\YD^C_A$ for $A = H$ has the following interpretation: Given a bimodule category $\mathcal{M}$ over a monoidal category $\mathcal{C}$, the lax center $\mathscr{Z}_{\mathcal{C}}^{\lax}(\mathcal{M})$ is defined to be the category whose object is a pair $(M, \sigma)$ consisting of an object $M \in \mathcal{M}$ and a natural transformation $\sigma_X : M \otimes X \to X \otimes M$ ($X \in \mathcal{C}$) satisfying
\begin{equation*}
  \sigma_{X \otimes Y} = (\id_X \otimes \sigma_Y) (\sigma_X \otimes \id_Y)
  \quad \text{and} \quad \sigma_{\unitobj} = \id_M
\end{equation*}
for all objects $X, Y \in \mathcal{M}$. A morphism $f: (M, \sigma) \to (N, \tau)$ in $\mathscr{Z}_{\mathcal{C}}^{\lax}(\mathcal{M})$ is a morphism $f: M \to N$ in $\mathcal{M}$ satisfying $(\id_X \otimes f) \circ \sigma_X = \tau_X \circ (f \otimes \id_X)$ for all $X \in \mathcal{C}$. An object $M \in \YD_H^C$ becomes an object of the category $\mathscr{Z}_{\Mod^H}^{\lax}(\Mod^C)$ together with the natural transformation defined by
\begin{equation*}
  \sigma_{M, X} : M \otimes X \to X \otimes M,
  \quad m \otimes x \mapsto x_{(0)} \otimes m x_{(1)}
  \quad (m \in M, x \in X \in \Mod^H)
\end{equation*}
and, as in the well-known case where $C = H$, we see that the map $M \mapsto (M, \sigma_{M,-})$ gives rise to an equivalence from $\YD_H^C$ to $\mathscr{Z}_{\Mod^H}^{\lax}(\Mod^C)$. Now, based on this equivalence, we prove:

\begin{theorem}
  \label{thm:YD-equivalence}
  If $H$ is a Hopf algebra, then $\YD^C_H \approx {}^H_H\Mod^C_H$.
\end{theorem}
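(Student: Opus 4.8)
The plan is to obtain the asserted equivalence by composing the equivalence $\YD^C_H \approx \mathscr{Z}_{\Mod^H}^{\lax}(\Mod^C)$ recorded just above the statement with a short chain of equivalences relating the lax center to ${}^H_H\Mod^C_H$ via the bimodule form of the equivariant Eilenberg--Watts theorem. Throughout, $\Mod^C$ is regarded as a $\Mod^H$-bimodule category using that $C$ is an $H$-bimodule coalgebra, so both the lax center and the bimodule variant of Theorem~\ref{thm:equivariant-EW} make sense. The Hopf hypothesis will be used at exactly one point: to force the left module structure of the relevant functors to be strong.

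First I would invoke the bimodule variant of Theorem~\ref{thm:equivariant-EW} from Subsection~\ref{subsec:equivariant-EW-variants}, applied with both bialgebras equal to $H$, with the left-superscript coalgebra taken to be $H$ itself (viewed as a coalgebra in ${}_H\Mod_H$ via its multiplication) and the right-superscript coalgebra taken to be $C$. This yields an equivalence between ${}^H_H\Mod^C_H$ and the category $\mathscr{B}$ of lax $\Mod^H$-$\Mod^H$-bimodule functors $\Mod^H \to \Mod^C$ whose underlying functor is of type $\mathscr{L}$, sending a bicomodule $M$ to $T_M = (-) \coten_H M$.

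Second, I would identify $\mathscr{B}$ with the lax center. Since $H$ is a Hopf algebra, the proof of Theorem~\ref{thm:equivalence-iota} (applied to the source coalgebra $H$ and target coalgebra $C$, both as left $H$-module coalgebras) provides an explicit antipode-built inverse $\overline{\xi}$ of the left lax structure morphism; hence the left structure of every object of $\mathscr{B}$ is invertible, and since inverting the left structure is automatically compatible with the right structure, every object of $\mathscr{B}$ is strong as a left $\Mod^H$-module functor. A strong left module functor out of the regular module category $\Mod^H$ is determined by its value at the unit $\bfk$, via a natural isomorphism $F \cong (-) \otimes F(\bfk)$ with $F(\bfk) \in \Mod^C$; transporting the remaining (right) lax structure along this isomorphism produces a natural transformation $\sigma_Y : F(\bfk) \otimes Y \to Y \otimes F(\bfk)$, and the lax right module axioms~\eqref{eq:def-lax-module-functor} translate precisely into the two defining conditions of the lax center. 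Conversely, an object $(M,\sigma)$ of $\mathscr{Z}_{\Mod^H}^{\lax}(\Mod^C)$ yields the functor $(-)\otimes M$ with its evident strong left structure and right structure $\id \otimes \sigma$, which is of type $\mathscr{L}$ because tensoring a fixed object over $\bfk$ is exact and preserves coproducts. These assignments are mutually quasi-inverse, so $\mathscr{B} \approx \mathscr{Z}_{\Mod^H}^{\lax}(\Mod^C)$. Composing everything gives
\[
  \YD^C_H \approx \mathscr{Z}_{\Mod^H}^{\lax}(\Mod^C) \approx \mathscr{B} \approx {}^H_H\Mod^C_H .
\]

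I expect the main obstacle to be the second step: carefully matching the two halves of structure. On one side one must confirm that the left lax structure of a lax \emph{bimodule} functor is genuinely invertible and that its inverse remains coherent with the right structure (so that one really lands among strong-left, lax-right bimodule functors), extending Theorem~\ref{thm:equivalence-iota} from left module functors to the bimodule setting. On the other side one must verify in detail that, under $F \cong (-)\otimes F(\bfk)$, the right lax module functor axioms correspond term by term to the half-braiding axioms of the lax center, and that the resulting functors in both directions are compatible with the type-$\mathscr{L}$ condition. The remaining bookkeeping, reconciling the four simultaneous structures (left and right $H$-actions, left $H$- and right $C$-coactions) present in ${}^H_H\Mod^C_H$ with the comparatively economical data of a Yetter--Drinfeld module, is routine but somewhat involved.
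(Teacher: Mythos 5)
Your proposal is correct and follows essentially the same route as the paper: the paper also combines the bimodule variant of Theorem~\ref{thm:equivariant-EW} (giving ${}^H_H\Mod^C_H \approx \mathscr{L}_{\Mod^H\text{-}\Mod^H}(\Mod^H,\Mod^C)$) with Theorem~\ref{thm:equivalence-iota} to invert the left structure, and then extracts the half-braiding at $F(\bfk)$ as $\sigma_X = (\xi^{\ell}_{X,\bfk})^{-1}\circ \xi^{r}_{\bfk,X}$, which is exactly your transport of the right lax structure along $F \cong (-)\otimes F(\bfk)$.
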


This a special case of a well-known equivalence $\YD^C_A \approx {}^H_H\Mod^C_A$ established in \cite[Theorem 2.3]{MR1469112}. Similarly to Theorem~\ref{thm:fundamental-theorem}, the proof below aims to demonstrate that the equivalence $\YD^C_H \approx {}^H_H\Mod^C_H$ is a consequence of the equation `lax = strong' due to Theorem~\ref{thm:equivalence-iota}.

Recently, Aziz and Vercruysse \cite{AV} gave a module-category-theoretic interpretation of $\YD^C_A$ not limited to the case of $A = H$. Their result \cite[Theorem 4.3]{AV} involves a category with right `oplax action' of $\Mod^H$, that is, a category $\mathcal{M}$ equipped with an oplax monoidal functor from $(\Mod^H)^{\rev}$ to the category of endofunctors on $\mathcal{M}$. It is an interesting question whether \cite[Theorem 4.3]{AV} is obtained in a similar manner as the proof below with extending Theorems~\ref{thm:equivariant-EW} and~\ref{thm:equivalence-iota} to categories with (op)lax action of $\Mod^H$ and module functors between them.

\begin{proof}[Proof of Theorem~\ref{thm:YD-equivalence}]
  Let $\mathcal{L}$ be the category of $\Mod^H$-bimodule functors from $\Mod^H$ to $\Mod^C$ whose underlying functor is of type $\mathscr{L}$. As we have noted in Subsection~\ref{subsec:equivariant-EW-variants}, the category $\mathcal{L}$ is equivalent to ${}^H_H\Mod^C_H$. Thus, to prove this theorem, it suffices to show that $\mathcal{L}$ is equivalent to the lax center $\mathcal{Z} := \mathscr{Z}_{\Mod^H}^{\lax}(\Mod^C)$.

  By definition, an object of the category $\mathcal{L}$ is a functor $F : \Mod^H \to \Mod^C$ of type $\mathscr{L}$ endowed with natural transformations
  \begin{equation*}
    \xi^{\ell}_{X,Y} : X \otimes F(Y) \to F(X \otimes Y)
    \quad \text{and} \quad
    \xi^{r}_{X,Y} : F(X) \otimes Y \to F(X \otimes Y)
  \end{equation*}
  for $X, Y \in \Mod^H$ subject to the axioms for module functors.
  Theorem \ref{thm:equivalence-iota} implies that $\xi^{\ell}$ is invertible. Thus the right $C$-comodule $M := F(\bfk)$ comes equipped with a natural transformation $\sigma_{X} = (\xi_{X, \bfk}^{\ell})^{-1} \circ \xi_{\bfk, X}^r$ ($X \in \Mod^H$).
  By the axioms for module functors, it is easy to verify that $(M, \sigma)$ is an object of $\mathcal{Z}$. Hence we have a functor from $\mathcal{L}$ to $\mathcal{Z}$.

  Given an object $(M, \sigma)$ of $\mathcal{Z}$, we consider the functor $F : \Mod^H \to \Mod^C$ given by $F(X) = X \otimes M$ for $X \in \Mod^H$. The natural transformation $\sigma$ makes $F$ a lax bimodule functor of type $\mathscr{L}$. Thus we also have a functor from $\mathcal{Z}$ to $\mathcal{L}$. It is obvious that two functors between $\mathcal{L}$ and $\mathcal{Z}$ that we have constructed are mutually inverse to each other. The proof is done.
\end{proof}

\section{Bicategorical perspective}
\label{sec:applications-bicat}

\subsection{Bicategorical reformulation}

In this section, we interpret our results from a bicategorical perspective and give some applications. Let $H$ be a bialgebra. The equivalence of Theorem~\ref{thm:equivariant-EW} turns the cotensor product into the composition of lax module functors but with the order of the operations reversed. Namely, for left $H$-module coalgebras $C$, $D$ and $E$, the following diagram is commutative:
\begin{equation}
  \label{eq:Phi-and-composition}
  \begin{tikzcd}[column sep=5pt]
    {}_H^D\Mod^E \times {}_H^C\Mod^D
    \arrow[r, "\text{flip}"]
    \arrow[d, "{\Phi_{D,E} \times \Phi_{C,D}}"']
    & {}_H^C\Mod^D \times {}_H^D\Mod^E
    \arrow[r, "{\coten_D}"]
    & {}_H^C\Mod^E \arrow[d, "{\Phi_{C,E}}"] \\
    \mathscr{L}_{\Mod^H}^{\lax}(\Mod^D, \Mod^E)
    \times \mathscr{L}_{\Mod^H}^{\lax}(\Mod^C, \Mod^D)
    \arrow[rr, "\text{composition}"]
    & & \mathscr{L}_{\Mod^H}^{\lax}(\Mod^C, \Mod^E).
  \end{tikzcd}
\end{equation}
The observation is compiled into a biequivalence as follows: Given a bialgebra $H$, we introduce the following two bicategories:
\begin{itemize}
\item The bicategory $H\mbox{-}\mathbf{Coalg}$ has left $H$-module coalgebras as 0-cells. Given 0-cells $C$ and $D$ in this bicategory, ${}_H^C\Mod^D$ is the category of 1-cells from $C$ to $D$. The composition of 1-cells is given by the cotensor product.
\item The bicategory $\Mod^H\mbox{-}\mathbf{Mod}$ (which is in fact a 2-category) has left module categories over $\Mod^H$ that is equivalent to $\Mod^C$ for some left $H$-module coalgebra $C$ as 0-cells. Given 0-cells $\mathcal{M}$ and $\mathcal{N}$ of this bicategory, $\mathscr{L}^{\lax}_{\Mod^H}(\mathcal{M}, \mathcal{N})$ is the category of 1-cells from $\mathcal{M}$ to $\mathcal{N}$.
\end{itemize}
Given a bicategory $\mathcal{K}$, we denote by $\mathcal{K}^{\op}$ the bicategory obtained from $\mathcal{K}$ by reversing the order of the composition of 1-cells but leaving the order of the composition of 2-cells unchanged. The commutativity of the above diagram means:

\begin{theorem}
  \label{thm:biequiv-Coalg}
  There is a biequivalence
  \begin{equation*}
    H\mbox{-}\mathbf{Coalg} \to (\Mod^H\mbox{-}\mathbf{Mod})^{\op},
    \quad C \mapsto \Mod^C.
  \end{equation*}
\end{theorem}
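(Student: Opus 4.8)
The plan is to verify that the assignment $C \mapsto \Mod^C$ assembles into a pseudofunctor and then check that it satisfies the three conditions for a biequivalence: essential surjectivity on $0$-cells, and for each pair of $0$-cells an equivalence on the hom-categories of $1$-cells and $2$-cells. The key ingredients are already in place: Theorem~\ref{thm:equivariant-EW} supplies, for each pair $(C,D)$, an equivalence of categories $\Phi_{C,D} : {}^C_H\Mod^D \to \mathscr{L}^{\lax}_{\Mod^H}(\Mod^C, \Mod^D)$, and diagram~\eqref{eq:Phi-and-composition} records the compatibility of the $\Phi$'s with the two composition laws, up to the order-reversal built into the $(-)^{\op}$ on the target.

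\emph{First} I would pin down the pseudofunctor data. On $0$-cells the assignment is $C \mapsto \Mod^C$; the definition of the $0$-cells of $\Mod^H\mbox{-}\mathbf{Mod}$ makes essential surjectivity immediate, since every $0$-cell there is by fiat equivalent to some $\Mod^C$. On hom-categories the pseudofunctor acts by $\Phi_{C,D}$, which is an equivalence by Theorem~\ref{thm:equivariant-EW}; this is precisely the condition that the candidate biequivalence is \emph{locally an equivalence}, so the local fully-faithfulness and local essential surjectivity on $1$- and $2$-cells are handed to us for free. What remains is to produce the coherence $2$-cells: the invertible natural isomorphisms
\begin{equation*}
  \Phi_{C,E}(N \coten_D M) \cong \Phi_{D,E}(N) \circ \Phi_{C,D}(M)
  \quad (M \in {}^C_H\Mod^D,\ N \in {}^D_H\Mod^E),
\end{equation*}
witnessing that $\Phi$ intertwines $\coten$ with composition \emph{after the flip}, together with the unit comparison relating $\Phi_{C,C}(C)$ to the identity functor $\id_{\Mod^C}$. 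The composition comparison is exactly the content of the commutativity of~\eqref{eq:Phi-and-composition}, and the flip in that diagram is what the $(-)^{\op}$ on the target absorbs, so the target $1$-cell really is the composite in the opposite $2$-category.

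\emph{Then} I would check that these comparison cells satisfy the associativity and unit coherence axioms (pentagon-type and triangle-type identities) required of a pseudofunctor. Here the natural strategy is not to recompute anything at the level of functors, but to transport the associativity and unitality of the cotensor product $\coten_D$ on the source side through the equivalences $\Phi$. Because each $\Phi_{C,D}$ is an equivalence and the comparison isomorphisms are defined by the canonical identifications $(X \otimes W)\coten_D(\cdots)$ of Example~\ref{ex:canonical-Vect-module-structure} and Lemma~\ref{lem:equivariant-EW-1}, the coherence diagrams on the target side reduce, under $\Phi$, to the corresponding coassociativity/counit coherences of $\coten$, which hold on the nose as subspace identifications. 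This is the step that is conceptually routine but notationally the heaviest, and I would discharge it by invoking the transport-of-structure principle: an equivalence of categories carries a pseudofunctor structure to an equivalent one, so one only needs the comparison cells to be \emph{natural} and the squares in~\eqref{eq:Phi-and-composition} to commute coherently, both of which are already established.

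\emph{The main obstacle} is the bookkeeping around the order reversal. One must be careful that the cotensor product, which is covariant in both of its bicomodule arguments but composes ``from the right,'' maps under $\Phi$ to ordinary functor composition, which composes ``from the left''; this is precisely why the target is $(\Mod^H\mbox{-}\mathbf{Mod})^{\op}$ rather than $\Mod^H\mbox{-}\mathbf{Mod}$, and it is the one place where a sign-free but genuinely structural subtlety can be mishandled. I would therefore state the pseudofunctor on $1$-cells as $M \mapsto \Phi_{C,D}(M) = T_M$ and verify that the \textbf{flip} arrow in~\eqref{eq:Phi-and-composition} is exactly the datum converting the source composition order into the opposite of the target composition order, so that the comparison $2$-cells land in the correct hom-categories. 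Once the order conventions are fixed consistently, the remaining verifications are formal, and the biequivalence follows from the fact that a pseudofunctor which is essentially surjective on $0$-cells and a local equivalence is a biequivalence.
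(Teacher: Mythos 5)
Your proposal is correct and follows essentially the same route as the paper, which gives no detailed argument beyond asserting that the commutativity of~\eqref{eq:Phi-and-composition} together with the local equivalences $\Phi_{C,D}$ of Theorem~\ref{thm:equivariant-EW} and the definition of the $0$-cells of $\Mod^H\mbox{-}\mathbf{Mod}$ yields the biequivalence. You simply make explicit the standard bicategorical bookkeeping (coherence cells, the role of the flip, and the criterion that a locally-essentially-surjective local equivalence which is essentially surjective on $0$-cells is a biequivalence) that the paper leaves implicit.
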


\subsection{Equivalence of modules over $\Mod^H$ and ${}_H\Mod$}

Given a monoidal category $\mathcal{C}$, we denote by $\mathcal{C}^{\rev}$ the monoidal category obtained from $\mathcal{C}$ by reversing the order of the monoidal product. Theorem~\ref{thm:biequiv-Coalg} gives a monoidal equivalence
\begin{equation}
  \label{eq:H-Mod-as-dual}
  ({}_H\Mod, \otimes, \bfk)^{\rev}
  \approx (\mathscr{L}^{\lax}_{\Mod^H}(\Vect, \Vect), \circ, \id).
\end{equation}
For a left $H$-module coalgebra $C$, there are also equivalences
\begin{equation}
  \label{eq:L-Vect-Comod-C}
  {}_H\Mod^C
  \approx \mathscr{L}_{\Mod^H}^{\lax}(\Vect, \Mod^C), \quad
  {}_H^C\Mod
  \approx \mathscr{L}_{\Mod^H}^{\lax}(\Mod^C, \Vect)
\end{equation}
of categories. The category $\mathscr{L}_{\Mod^H}^{\lax}(\Vect, \Mod^C)$ is a left ${}_H\Mod$-module category by the action given by the monoidal equivalence \eqref{eq:H-Mod-as-dual} and the composition of lax module functors. The commutativity of \eqref{eq:Phi-and-composition} implies that the first equivalence in \eqref{eq:L-Vect-Comod-C} is in fact an equivalence of left ${}_H\Mod$-module categories. Similarly, the second equivalence in \eqref{eq:L-Vect-Comod-C} is an equivalence of right ${}_H\Mod$-module categories. Now we introduce the following 2-category:
\begin{itemize}
\item The 2-category ${}_H\Mod\mbox{-}\mathbf{Mod}$ has left ${}_H\Mod$-module categories that is equivalent to ${}_H\Mod^C$ for some left $H$-module coalgebra $C$ as 0-cells. Given 0-cells $\mathcal{M}$ and $\mathcal{N}$, the category of 1-cells from $\mathcal{M}$ to $\mathcal{N}$ is the category of left exact strong left $\Mod^H$-module functors from $\mathcal{M}$ to $\mathcal{N}$. 
\end{itemize}
The definition of ${}_H\Mod\mbox{-}\mathbf{Mod}$ is admittedly ad hoc as it is not compatible with the definition of $\Mod^H\mbox{-}\mathbf{Mod}$. Nevertheless, this 2-category is useful to state:

\begin{theorem}
  \label{thm:duality-1}
  There is a 2-equivalence
  \begin{equation}
    \label{eq:duality-1}
    (-)_{*} : \Mod^H\mbox{-}\mathbf{Mod}
    \to {}_H\Mod\mbox{-}\mathbf{Mod},
    \quad \mathcal{M} \mapsto \mathcal{M}_* := \mathscr{L}_{\Mod^H}^{\lax}(\Vect, \mathcal{M}),
  \end{equation}
  where ${}_H\Mod$ acts on the category $\mathcal{M}_{*}$ by the equivalence \eqref{eq:H-Mod-as-dual} and the composition of lax module functors.
\end{theorem}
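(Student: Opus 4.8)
The plan is to verify that $(-)_*$ is a $2$-functor which is essentially surjective on $0$-cells and a local equivalence (i.e.\ an equivalence on every hom-category); by the standard recognition criterion for biequivalences this is enough. The organising principle is that the left ${}_H\Mod$-action on $\mathcal{M}_* = \mathscr{L}^{\lax}_{\Mod^H}(\Vect, \mathcal{M})$ is, through the monoidal equivalence \eqref{eq:H-Mod-as-dual}, precomposition with lax endofunctors of $\Vect$, whereas $(-)_*$ acts on $1$-cells by postcomposition; since composition of functors is strictly associative these commute on the nose, and this is the fact that makes the target's \emph{strong} condition compatible with the source's lax one.

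First I would pin down $(-)_*$ as a $2$-functor. On a $0$-cell $\mathcal{M} \approx \Mod^C$, the equivalence \eqref{eq:L-Vect-Comod-C} exhibits $\mathcal{M}_* \approx {}_H\Mod^C$ as left ${}_H\Mod$-module categories, so $\mathcal{M}_*$ is a genuine $0$-cell of ${}_H\Mod\mbox{-}\mathbf{Mod}$. On a $1$-cell $F \colon \mathcal{M} \to \mathcal{N}$ I set $F_* := F \circ (-)$; this lands in $\mathcal{N}_*$ because a composite of lax $\Mod^H$-module functors of type $\mathscr{L}$ is again such. The key point is that $F_*$ is strict as an ${}_H\Mod$-module functor: for $V \in {}_H\Mod$ with associated endofunctor $L_V$, both $V \cdot F_*(G)$ and $F_*(V \cdot G)$ are literally $F \circ G \circ L_V$, so its structure morphism is the identity; strict functoriality $(G \circ F)_* = G_* \circ F_*$ and the action on $2$-cells by whiskering are equally immediate. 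Left exactness of $F_*$ I would defer to the local description below.

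Next I would treat essential surjectivity and the local equivalence. Any $0$-cell $\mathcal{N}$ of the target is equivalent to some ${}_H\Mod^C \approx (\Mod^C)_*$ by \eqref{eq:L-Vect-Comod-C}, hence lies in the essential image. For the local statement it suffices, replacing $0$-cells by the representatives $\Mod^C, \Mod^D$, to show that
\[
  (-)_* \colon \mathscr{L}^{\lax}_{\Mod^H}(\Mod^C, \Mod^D)
  \longrightarrow {}_H\Mod\mbox{-}\mathbf{Mod}\bigl({}_H\Mod^C,\, {}_H\Mod^D\bigr)
\]
is an equivalence, the right-hand hom-category being that of left exact strong ${}_H\Mod$-module functors. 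Here I would invoke the instance of \eqref{eq:Phi-and-composition} with $(C,D,E)$ replaced by $(\bfk, C, D)$: since $\Phi_{\bfk,-}$ is precisely the equivalence \eqref{eq:L-Vect-Comod-C} and $\Phi_{\bfk,C}(N) = (-)\coten_\bfk N = (-)\otimes N$, its commutativity says exactly that, under $\mathcal{M}_* \cong {}_H\Mod^C$ and $\mathcal{N}_* \cong {}_H\Mod^D$, the functor $(T_M)_* = T_M \circ (-)$ corresponds to $N \mapsto N \coten_C M$. In particular $F_*$ is left exact, the cotensor product being a kernel, which discharges the obligation left open above.

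Finally I would conclude. By Theorem~\ref{thm:equivariant-EW}, $\Phi_{C,D}\colon {}^C_H\Mod^D \approx \mathscr{L}^{\lax}_{\Mod^H}(\Mod^C, \Mod^D)$ sends $M$ to $T_M$; by Corollary~\ref{cor:EW-Pareigis} applied to $\mathcal{T} = {}_H\Mod$ (whose tensor product, being $\otimes_\bfk$, is exact and so in particular left exact), $M \mapsto (-)\coten_C M$ is an equivalence from ${}^C_H\Mod^D$ onto the target hom-category. The preceding paragraph shows $(-)_*$ intertwines these two equivalences, so it is itself an equivalence on hom-categories; together with essential surjectivity this makes $(-)_*$ a $2$-equivalence. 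I expect the only genuinely delicate part to be the bookkeeping of the module structures and the order reversals (the $\rev$ in \eqref{eq:H-Mod-as-dual}, the $\op$ in Theorem~\ref{thm:biequiv-Coalg}); once $(-)_*$ is identified with cotensoring via \eqref{eq:Phi-and-composition}, everything else is a formal consequence of the Eilenberg--Watts equivalences already in hand.
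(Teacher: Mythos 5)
Your proposal is correct and follows essentially the same route as the paper: establish 2-functoriality by identifying $F_{*}$ with $(-) \coten_C M : {}_H\Mod^C \to {}_H\Mod^D$ (whence left exactness), get essential surjectivity on 0-cells from \eqref{eq:L-Vect-Comod-C}, and prove the local equivalence by matching the identification of the source hom-category with ${}^C_H\Mod^D$ via Theorem~\ref{thm:equivariant-EW} against the identification of the target via Corollary~\ref{cor:EW-Pareigis}. The only cosmetic difference is that you make explicit the instance of \eqref{eq:Phi-and-composition} with $(\bfk, C, D)$ where the paper simply asserts the functor becomes the identity under these identifications.
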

\begin{proof}
  We shall first verify that \eqref{eq:duality-1} is a 2-functor. Let $F: \mathcal{M} \to \mathcal{N}$ be a 1-cell in the 2-category $\Mod^H\mbox{-}\mathbf{Mod}$. The only unclear point is that the induced functor
  \begin{equation*}
    F_{*} : \mathcal{M}_{*} \to \mathcal{N}_{*},
    \quad F_{*}(T) = F \circ T
    \quad (T \in \mathcal{M}_{*})
  \end{equation*}
  is indeed a 1-cell in ${}_H\Mod\mbox{-}\mathbf{Mod}$. It is obvious that $F_{*}$ is a strong ${}_H\Mod$-module functor. By the definition of $\Mod^H\mbox{-}\mathbf{Mod}$, we may assume that $\mathcal{M} = \Mod^C$ and $\mathcal{N} = \Mod^D$ for some left $H$-module coalgebras $C$ and $D$. We may also assume that $F = (-) \coten_C M$ for some $M \in {}_H^C\Mod^D$. Then $\mathcal{M}_{*}$, $\mathcal{N}_{*}$ and $F_{*}$ are identified with ${}_H\Mod^C$, ${}_H\Mod^D$ and $(-) \coten_C M$, respectively. Thus $F_{*}$ is left exact.

  We now show that \eqref{eq:duality-1} is a 2-equivalence. The 2-functor \eqref{eq:duality-1} is essentially surjective on 0-cells by \eqref{eq:L-Vect-Comod-C}. It remains to show that the functor
  \begin{equation}
    \label{eq:duality-1-proof-1}
    \mathscr{L}_{\Mod^H}^{\lax}(\mathcal{M}, \mathcal{N})
    \to
    \left(
      \begin{gathered}
        \text{the category of left exact strong} \\[-2pt]
        \text{left ${}_H\Mod$-module functors $\mathcal{M}_{*} \to \mathcal{N}_{*}$}
      \end{gathered}
    \right),
    \quad F \mapsto F_{*}
  \end{equation}
  is an equivalence for all 0-cells $\mathcal{M}$ and $\mathcal{N}$ in $\Mod^H\mbox{-}\mathbf{Mod}$. For this purpose, we may assume that $\mathcal{M} = \Mod^C$ and $\mathcal{N} = \Mod^D$ as above. Then the source of \eqref{eq:duality-1-proof-1} is identified with ${}_H^C\Mod^D$ by Theorem \ref{thm:equivariant-EW}, while the target of \eqref{eq:duality-1-proof-1} is also identified with ${}_H^C\Mod^D$ by Corollary~\ref{cor:EW-Pareigis}. Under these identifications, the functor~\eqref{eq:duality-1-proof-1} is identified with the identity functor, which is an equivalence. The proof is done.
\end{proof}

For a bialgebra $H$, we define the 2-category $\mathbf{Mod}\mbox{-}{}_H\Mod$ of right ${}_H\Mod$-module categories of the form ${}_H^C\Mod$ for some left $H$-module coalgebras $C$ by the same manner as ${}_H\Mod\mbox{-}\mathbf{Mod}$. By using the second equivalence in \eqref{eq:L-Vect-Comod-C} instead of the first one, one can prove the following theorem:

\begin{theorem}
  \label{thm:duality-2}
  There is a 2-equivalence
  \begin{equation}
    \label{eq:duality-2}
    (-)^{*} : \Mod^H\mbox{-}\mathbf{Mod}
    \to (\mathbf{Mod}\mbox{-}{}_H\Mod)^{\op},
    \quad \mathcal{M} \mapsto \mathcal{M}^* := \mathscr{L}_{\Mod^H}^{\lax}(\mathcal{M}, \Vect).
  \end{equation}
\end{theorem}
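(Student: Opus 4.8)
The plan is to follow the proof of Theorem~\ref{thm:duality-1} line by line, the one essential novelty being that $(-)^{*}$ is \emph{contravariant} on $1$-cells, which is precisely why the target is the opposite $2$-category $(\mathbf{Mod}\mbox{-}{}_H\Mod)^{\op}$. Concretely, a $1$-cell $F : \mathcal{M} \to \mathcal{N}$ in $\Mod^H\mbox{-}\mathbf{Mod}$ should be sent to the \emph{precomposition} functor $F^{*} : \mathcal{N}^{*} \to \mathcal{M}^{*}$, $F^{*}(T) = T \circ F$ for $T \in \mathcal{N}^{*} = \mathscr{L}_{\Mod^H}^{\lax}(\mathcal{N}, \Vect)$; the reversal of direction here is exactly the $\op$ in \eqref{eq:duality-2}. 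The right ${}_H\Mod$-action on $\mathcal{M}^{*}$ is the one transported along \eqref{eq:H-Mod-as-dual} and the composition of lax module functors, so that $F^{*}(T \cdot X) = F^{*}(T) \cdot X$ holds strictly by associativity of composition; in particular $F^{*}$ is automatically a strong right ${}_H\Mod$-module functor.

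First I would check that \eqref{eq:duality-2} is a $2$-functor, for which the only nonformal point is that $F^{*}$ is a $1$-cell in $\mathbf{Mod}\mbox{-}{}_H\Mod$, i.e.\ that it is in addition left exact. As in the proof of Theorem~\ref{thm:duality-1}, I would reduce to the case $\mathcal{M} = \Mod^C$, $\mathcal{N} = \Mod^D$ and $F = (-) \coten_C M$ for some $M \in {}_H^C\Mod^D$. Under the identifications $\mathcal{M}^{*} = {}_H^C\Mod$ and $\mathcal{N}^{*} = {}_H^D\Mod$ furnished by the second equivalence in \eqref{eq:L-Vect-Comod-C} (which sends $N \mapsto (-) \coten_D N$), associativity of the cotensor product gives $F^{*}\big((-) \coten_D N\big) = \big((-) \coten_C M\big) \coten_D N = (-) \coten_C (M \coten_D N)$, so that $F^{*}$ is identified with $M \coten_D (-) : {}_H^D\Mod \to {}_H^C\Mod$. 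Since the cotensor product is a kernel, this functor is left exact, as required.

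Next I would prove that \eqref{eq:duality-2} is a $2$-equivalence. Essential surjectivity on $0$-cells is immediate from the second equivalence in \eqref{eq:L-Vect-Comod-C}. For the hom-categories it suffices, again reducing to $\mathcal{M} = \Mod^C$ and $\mathcal{N} = \Mod^D$, to show that
\[
  \mathscr{L}_{\Mod^H}^{\lax}(\Mod^C, \Mod^D)
  \longrightarrow
  \{ \text{left exact strong right } {}_H\Mod\text{-module functors } {}_H^D\Mod \to {}_H^C\Mod \},
  \quad F \mapsto F^{*},
\]
is an equivalence. The source is identified with ${}_H^C\Mod^D$ by Theorem~\ref{thm:equivariant-EW}. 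The target is identified with ${}_H^C\Mod^D$ by the variant of Corollary~\ref{cor:EW-Pareigis} obtained by applying that corollary to the reversed monoidal category $\mathcal{T} = ({}_H\Mod)^{\rev}$ (which is abelian with exact, hence left exact, tensor product): a right ${}_H\Mod$-module functor is a left $\mathcal{T}$-module functor, a left $C$-comodule in ${}_H\Mod$ is a right $C$-comodule in $\mathcal{T}$, and, taking the coalgebras in the order $D$ then $C$, a $D$-$C$-bicomodule in $\mathcal{T}$ is a $C$-$D$-bicomodule $M$ in ${}_H\Mod$, the associated functor being $M \coten_D (-)$. Chasing the computation of the previous paragraph, $F \mapsto F^{*}$ becomes the identity functor on ${}_H^C\Mod^D$ under these two identifications, hence an equivalence; this completes the argument.

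The main obstacle I expect is purely bookkeeping: keeping the variances consistent and, above all, pinning down the correct form of Corollary~\ref{cor:EW-Pareigis} for \emph{left} comodules and \emph{right} module functors. Passing to $({}_H\Mod)^{\rev}$ is the cleanest route, but one must take care that this interchanges the roles of $C$ and $D$, that a coalgebra in $({}_H\Mod)^{\rev}$ is the co-opposite of one in ${}_H\Mod$, and that the resulting cotensor functor comes out as $M \coten_D (-)$ rather than $(-) \coten_C M$ — which is exactly what is needed to match the precomposition formula $F^{*}(T) = T \circ F$ and to make the final functor the identity on ${}_H^C\Mod^D$.
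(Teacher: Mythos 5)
Your proposal is correct and follows exactly the route the paper intends: the paper gives no separate proof of Theorem~\ref{thm:duality-2}, saying only that one repeats the proof of Theorem~\ref{thm:duality-1} with the second equivalence of \eqref{eq:L-Vect-Comod-C} in place of the first, and your argument is a faithful elaboration of that (precomposition is strictly compatible with the post-composition action, reduction to $F=(-)\coten_C M$ identifies $F^*$ with the left exact functor $M\coten_D(-)$, and the hom-functor becomes the identity on ${}_H^C\Mod^D$ under Theorem~\ref{thm:equivariant-EW} and the $({}_H\Mod)^{\rev}$-form of Corollary~\ref{cor:EW-Pareigis}). The bookkeeping points you flag are handled consistently with how the paper itself silently uses Corollary~\ref{cor:EW-Pareigis} for left comodules and right module functors.
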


We assume that $H$ is a Hopf algebra. Then an inverse of the 2-equivalence \eqref{eq:duality-2} is given as follows: For a 0-cell $\mathcal{M}$ in $\mathbf{Mod}\mbox{-}{}_H\Mod$, we define $\mathcal{M}^{*}$ to be the category of left exact strong ${}_H\Mod$-module functors from $\Vect$ to $\mathcal{M}$, where ${}_H\Mod$ acts on $\Vect$ through the forgetful functor ${}_H\Mod \to \Vect$. We note that $\Vect \approx {}_H^H\Mod$ as right ${}_H\Mod$-module categories by the fundamental theorem for Hopf modules. Thus, if $\mathcal{M} = {}_H^C\Mod$ for some left $H$-module coalgebra $C$, then we have $\mathcal{M}^{*} \approx {}^H_H \Mod^C \approx \Mod^C$ by Corollary~\ref{cor:EW-Pareigis}. From this observation, we see that the 2-functor
\begin{equation*}
  \mathbf{Mod}\mbox{-}\Mod^H \to ({}_H\Mod\mbox{-}\mathbf{Mod})^{\op},
  \quad \mathcal{M} \mapsto \mathcal{M}^*
\end{equation*}
is an inverse of \eqref{eq:duality-2}.

We now assume moreover that the antipode of $H$ is bijective. Then an inverse of \eqref{eq:duality-1} is described in a similar way as above. The bijectivity of the antipode implies that $\Vect$ is equivalent to ${}_H\Mod^H$ as a left ${}_H\Mod$-module category. For a 0-cell $\mathcal{M}$ in ${}_H\Mod\mbox{-}\mathbf{Mod}$, we define $\mathcal{M}_{*}$ to be the category of left exact strong ${}_H\Mod$-module functors from $\Vect$ to $\mathcal{M}$. If $\mathcal{M} = {}_H\Mod^C$ for some left $H$-module coalgebra $C$, then we have $\mathcal{M}_{*} \approx {}^H_H \Mod^C \approx \Mod^C$ by Corollary~\ref{cor:EW-Pareigis}. Hence the 2-functor
\begin{equation*}
  {}_H\Mod\mbox{-}\mathbf{Mod} \to \Mod^H\mbox{-}\mathbf{Mod},
  \quad \mathcal{M} \mapsto \mathcal{M}_{*}
\end{equation*}
is an inverse of \eqref{eq:duality-1}.

\subsection{Equivariant Morita-Takeuchi equivalence}

Finally, we give applications to `equivariant' Morita-Takeuchi theory.
Let $H$ be a bialgebra. We recall that two coalgebras $C$ and $D$ are said to be {\em Morita-Takeuchi equivalent} if $\Mod^C$ and $\Mod^D$ are equivalent as linear categories.
Accordingly, we say that two left $H$-module coalgebras $C$ and $D$ are {\em $H$-Morita-Takeuchi equivalent} if $\Mod^C$ and $\Mod^D$ are equivalent as left $\Mod^H$-module categories. This equivalence relation is characterized as follows:

\begin{theorem}
  \label{thm:H-Morita-Takeuchi}
  Let $H$ be a bialgebra.
  For two left $H$-module coalgebras $C$ and $D$, the following are equivalent:
  \begin{enumerate}
  \item $C$ and $D$ are $H$-Morita-Takeuchi equivalent.
  \item ${}^C\Mod$ and ${}^D\Mod$ are equivalent as left ${}^H\Mod$-module categories.
  \item ${}_H\Mod^C$ and ${}_H\Mod^D$ are equivalent as left ${}_H\Mod$-module categories.
  \item ${}^C_H\Mod$ and ${}^D_H\Mod$ are equivalent as right ${}_H\Mod$-module categories.
  \item There are objects $M \in {}^C_H\Mod^D$ and $N \in {}^D_H\Mod^C$ such that
    \begin{equation*}
      \text{$M \coten_D N \cong C$ in ${}^C_H\Mod^C$}
      \quad \text{and} \quad
      \text{$N \coten_C M \cong D$ in ${}^D_H\Mod^D$}.
    \end{equation*}
  \end{enumerate}
\end{theorem}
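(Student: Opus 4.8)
The plan is to reduce every one of the five items to a single assertion, namely that $C$ and $D$ are \emph{equivalent as $0$-cells} of the bicategory $H\mbox{-}\mathbf{Coalg}$, and then to transport this assertion along the (bi)equivalences already at our disposal. The mechanism is the standard principle that a biequivalence of bicategories both preserves and reflects equivalences of $0$-cells: if $F\colon\mathcal{K}\to\mathcal{L}$ is a biequivalence, then objects $C,D$ of $\mathcal{K}$ are internally equivalent in $\mathcal{K}$ if and only if $F(C),F(D)$ are internally equivalent in $\mathcal{L}$. One direction carries an internal equivalence $C\simeq D$ to $F(C)\simeq F(D)$ using the pseudofunctoriality isomorphisms of $F$; the other lifts an equivalence back using that $F$ is essentially surjective and locally an equivalence. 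I will also use the trivial fact that passing to the opposite bicategory $(-)^{\op}$ leaves unchanged which $0$-cells are equivalent, since an internal equivalence is a symmetric datum of $1$-cells in both directions.

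First I would identify condition (3) of the \emph{statement}, i.e.\ item~(5), with internal equivalence of $C$ and $D$ in $H\mbox{-}\mathbf{Coalg}$. By construction the $1$-cells $C\to D$ in this bicategory are the objects of ${}^C_H\Mod^D$, the composition is the cotensor product taken in diagrammatic order (as forced by the commutativity of \eqref{eq:Phi-and-composition}), and the identity $1$-cell on $C$ is $C$ itself because $(-)\coten_C C\cong\id_{\Mod^C}$. Unwinding the notion of an internal equivalence then says exactly that there exist $M\in{}^C_H\Mod^D$ and $N\in{}^D_H\Mod^C$ with $M\coten_D N\cong C$ in ${}^C_H\Mod^C$ and $N\coten_C M\cong D$ in ${}^D_H\Mod^D$, which is precisely item~(5).

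Next I would obtain (1), (3), (4) by applying the general principle to the three (bi)equivalences issuing from $H\mbox{-}\mathbf{Coalg}$. For (1), Theorem~\ref{thm:biequiv-Coalg} gives the biequivalence $C\mapsto\Mod^C$ into $(\Mod^H\mbox{-}\mathbf{Mod})^{\op}$, so $C\simeq D$ in $H\mbox{-}\mathbf{Coalg}$ if and only if $\Mod^C$ and $\Mod^D$ are equivalent $0$-cells of $\Mod^H\mbox{-}\mathbf{Mod}$; since any equivalence of left $\Mod^H$-module categories is a strong module functor whose underlying functor is an equivalence, hence left exact and coproduct-preserving, hence of type $\mathscr{L}$, this is the same as item~(1). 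For (3), I compose with the opposite of the $2$-equivalence $(-)_{*}$ of Theorem~\ref{thm:duality-1} to get a biequivalence $C\mapsto(\Mod^C)_{*}\approx{}_H\Mod^C$ into $({}_H\Mod\mbox{-}\mathbf{Mod})^{\op}$ (using \eqref{eq:L-Vect-Comod-C}); the same remark that equivalences are automatically strong and exact identifies equivalence of the images with item~(3). Item~(4) is handled identically via $(-)^{*}$ of Theorem~\ref{thm:duality-2}, giving $C\mapsto{}^C_H\Mod$ into $\mathbf{Mod}\mbox{-}{}_H\Mod$.

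Finally, for (2) I would pass to co-opposites. The monoidal flip equivalences ${}^H\Mod\approx\Mod^{H^{\cop}}$ and ${}^C\Mod\approx\Mod^{C^{\cop}}$ (under which $C^{\cop}$ becomes a left $H^{\cop}$-module coalgebra and ${}^C\Mod$ becomes $\Mod^{C^{\cop}}$ as a module category) turn item~(2) for $(H,C,D)$ into item~(1) for $(H^{\cop},C^{\cop},D^{\cop})$; by the already-proven equivalence (1)$\Leftrightarrow$(5) applied to $H^{\cop}$ this is item~(5) for the co-opposite data, which the co-opposite dictionary for bicomodules and cotensor products (a $C^{\cop}$-$D^{\cop}$-bicomodule in ${}_{H^{\cop}}\Mod={}_H\Mod$ is a $D$-$C$-bicomodule in ${}_H\Mod$, with cotensor products matched accordingly) identifies with item~(5) for $(H,C,D)$, using that item~(5) is symmetric under the simultaneous swap $(C,M)\leftrightarrow(D,N)$. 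I expect this last item to be the main obstacle: unlike (3) and (4), condition (2) is not literally the image of $C$ under one of the biequivalences we have built, so the work lies in checking that the left/right flip is compatible with the $H$-equivariance and with the module-category structures, so that the co-opposite translation sends item~(5) to itself. Once this compatibility is in place, all remaining implications are formal consequences of the bicategorical transport principle together with the structural results already established.
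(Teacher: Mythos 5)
Your proof is correct and follows essentially the same route as the paper: identify each condition with internal equivalence of $0$-cells in the appropriate bicategory and transport it along the (bi)equivalences of Theorems~\ref{thm:biequiv-Coalg}, \ref{thm:duality-1} and \ref{thm:duality-2}, using the passage to $H^{\cop}$, $C^{\cop}$, $D^{\cop}$ for the remaining items. The only cosmetic difference is that you obtain item~(4) directly from Theorem~\ref{thm:duality-2}, whereas the paper derives both (2) and (4) at once by applying the $(1)\Leftrightarrow(3)\Leftrightarrow(5)$ argument to the co-opposite data.
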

\begin{proof}
  By the definitions of respective bicategroies, we have:
  \begin{itemize}
  \item [(1)] $\Leftrightarrow$ $\Mod^C$ and $\Mod^D$ are equivalent in ${}^H\Mod\mbox{-}\mathbf{Mod}$,
  \item [(3)] $\Leftrightarrow$ ${}_H\Mod^C$ and ${}_H\Mod^D$ are equivalent in ${}_H\Mod\mbox{-}\mathbf{Mod}$, and
  \item [(5)] $\Leftrightarrow$ $C$ and $D$ are equivalent in $H\mbox{-}\mathbf{Coalg}$.
  \end{itemize}
  Thus (1), (3) and (5) are equivalent by Theorems~\ref{thm:biequiv-Coalg} and \ref{thm:duality-1}.
  By applying the same argument to $H^{\cop}$, $C^{\cop}$ and $D^{\cop}$ (where $(-)^{\cop}$ means to take the opposite coalgebra), we see that (2), (4) and (5) are equivalent. The proof is done.
\end{proof}

We assume that the bialgebra $H$ is finite-dimensional.
Then the dual space $H^*$ of $H$ is also a bialgebra. There is a left $H^*$-module coalgebra $H^* \# C$, called the {\em cosmash product}, such that the category $\Mod^{H^* \# C}$ is isomorphic to ${}_H\Mod^C$ as left module categories over ${}_H\Mod$ ($\cong \Mod^{H^*}$). Specifically, $H^* \# C$ is the free left $H^*$-module $H^* \otimes C$ endowed with the comultiplication
\begin{equation*}
  \Delta_{H^* \# C}(f \otimes c) =
  (f_{(1)} \otimes (c_{(1)})_{[0]}) \otimes (f_{(2)} \star (c_{(1)})_{[1]} \otimes c_{(2)})
  \quad (f \in H^*, c \in C),
\end{equation*}
where $\star$ is the convolution product of $H^*$ and $c \mapsto c_{[0]} \otimes c_{[1]}$ is the right $H^*$-coaction induced by the left $H$-action of $H$.
The previous theorem implies:

\begin{theorem}
  Let $H$ be a finite-dimensional bialgebra.
  Two left $H$-module coalgebras $C$ and $D$ are $H$-Morita-Takeuchi equivalent if and only if the left $H^*$-module coalgebras $H^* \# C$ and $H^* \# D$ are $H^*$-Morita-Takeuchi equivalent.
\end{theorem}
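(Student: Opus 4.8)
The plan is to reduce the statement to the characterization of $H$-Morita-Takeuchi equivalence provided by Theorem~\ref{thm:H-Morita-Takeuchi}, in particular the equivalence of its conditions (1) and (3), by transporting the problem across the monoidal isomorphism $\Mod^{H^*} \cong {}_H\Mod$ that is available because $H$ is finite-dimensional.

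First I would unwind the definitions. By the definition of $H^*$-Morita-Takeuchi equivalence applied to the bialgebra $H^*$, the left $H^*$-module coalgebras $H^* \# C$ and $H^* \# D$ are $H^*$-Morita-Takeuchi equivalent if and only if $\Mod^{H^* \# C}$ and $\Mod^{H^* \# D}$ are equivalent as left $\Mod^{H^*}$-module categories. Since $H$ is finite-dimensional, the standard monoidal isomorphism $\Mod^{H^*} \cong {}_H\Mod$—under which a right $H^*$-comodule is identified with the corresponding left $H$-module, and the convolution product of $H^*$ is matched with the comultiplication of $H$—allows me to read this instead as an equivalence of left ${}_H\Mod$-module categories.

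Next I would invoke the isomorphism $\Mod^{H^* \# C} \cong {}_H\Mod^C$ of left ${}_H\Mod$-module categories recalled immediately before the statement, together with its analogue for $D$. Composing these identifications, the condition that $\Mod^{H^* \# C}$ and $\Mod^{H^* \# D}$ are equivalent as left ${}_H\Mod$-module categories becomes precisely the condition that ${}_H\Mod^C$ and ${}_H\Mod^D$ are equivalent as left ${}_H\Mod$-module categories, which is exactly condition (3) of Theorem~\ref{thm:H-Morita-Takeuchi}. Applying the equivalence (1) $\Leftrightarrow$ (3) of that theorem identifies this with the assertion that $C$ and $D$ are $H$-Morita-Takeuchi equivalent, completing the argument.

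The main obstacle, such as it is, lies not in any single deep step but in verifying that the whole chain of identifications respects the module structures throughout—in particular, that the monoidal isomorphism $\Mod^{H^*} \cong {}_H\Mod$ is the same one implicitly used in the isomorphism $\Mod^{H^* \# C} \cong {}_H\Mod^C$, so that the two may be composed as equivalences of left ${}_H\Mod$-module categories. Since the cosmash product $H^* \# C$ is defined precisely so as to realize this compatibility, confirming it amounts to a bookkeeping check rather than a genuine difficulty, and I would treat it as such.
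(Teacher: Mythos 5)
Your argument is exactly the one the paper intends: the paper states the result as an immediate consequence of Theorem~\ref{thm:H-Morita-Takeuchi}, having set up the identification $\Mod^{H^*\# C}\cong {}_H\Mod^C$ as left ${}_H\Mod\;(\cong\Mod^{H^*})$-module categories just beforehand, so the content is precisely the equivalence (1) $\Leftrightarrow$ (3) of that theorem transported across these isomorphisms. Your proposal is correct and matches the paper's (implicit) proof, including the observation that the only thing to check is compatibility of the module structures under the identifications.
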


\section{Module full subcategories}
\label{sec:applications-mod-full-subcat}

Let $\mathcal{M}$ be a Grothendieck category. A {\em closed subcategory} of $\mathcal{M}$ is a full subcategory of $\mathcal{M}$ that is closed under subobjects, quotient objects and direct sums. According to \cite[Theorem 2.5.5]{MR1786197}, there is a bijection between the set of subcoalgebras of a coalgebra $C$ and the set of closed subcategories of $\Mod^C$. The aim of this section is to extend this fact to the setting of module categories. Let $H$ be a bialgebra, and let $C$ be a left $H$-module coalgebra. An {\em $H$-module subcoalgebra of $C$} is a subcoalgebra of $C$ as well as an $H$-submodule. A {\em closed $\Mod^H$-module subcategory} $\mathcal{M}$ of $\Mod^C$ is a closed subcategory of $\Mod^C$ such that $X \otimes M \in \mathcal{M}$ for any $X \in \Mod^H$ and $M \in \mathcal{M}$. The final result of this paper is stated as follows:

\begin{theorem}
  \label{thm:closed-module-subcats}
  Let $H$ and $C$ be as above. Then there is a bijection between the set of $H$-module subcoalgebras of $C$ and the set of closed $\Mod^H$-module subcategories of $\Mod^C$.
\end{theorem}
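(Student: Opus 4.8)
The plan is to show that the inclusion-preserving bijection of \cite[Theorem 2.5.5]{MR1786197} between subcoalgebras of $C$ and closed subcategories of $\Mod^C$ restricts to the asserted bijection. Recall that this bijection sends a subcoalgebra $E \subseteq C$ to the full subcategory $\mathcal{C}_E$ of those $M \in \Mod^C$ whose coaction factors through $M \otimes E$, and sends a closed subcategory $\mathcal{N}$ to the subcoalgebra $\sum_{M \in \mathcal{N}} \mathrm{cf}(M)$. Here $\mathrm{cf}(M) \subseteq C$ denotes the coefficient coalgebra of $M$, that is, the span of the elements $(f \otimes \id_C)\coactr_M(m) = f(m_{(0)}) m_{(1)}$ for $f \in M^*$ and $m \in M$; equivalently, $\mathrm{cf}(M)$ is the smallest subcoalgebra $E'$ of $C$ with $\coactr_M(M) \subseteq M \otimes E'$, so that $\mathrm{cf}(M) \subseteq E$ if and only if $M \in \mathcal{C}_E$. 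Since the correspondence $E \leftrightarrow \mathcal{C}_E$ is already a bijection of the full sets, it suffices to verify that it carries $H$-module subcoalgebras to closed $\Mod^H$-module subcategories and that its inverse carries closed $\Mod^H$-module subcategories to $H$-module subcoalgebras.

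The technical core is a single identity relating the coefficient coalgebra of $X \otimes M$, for $X \in \Mod^H$ and $M \in \Mod^C$, to the $H$-action on $\mathrm{cf}(M)$. Writing $H \cdot \mathrm{cf}(M)$ for the $H$-submodule of $C$ generated by $\mathrm{cf}(M)$ under the module structure of $C$, I would first prove the inclusion $\mathrm{cf}(X \otimes M) \subseteq H \cdot \mathrm{cf}(M)$. Indeed, using $\coactr_{X \otimes M}(x \otimes m) = x_{(0)} \otimes m_{(0)} \otimes x_{(1)} m_{(1)}$ and evaluating a functional of the form $\phi \otimes f \in X^* \otimes M^*$ against the first two tensorands, a typical spanning element of $\mathrm{cf}(X \otimes M)$ is $\phi(x_{(0)}) f(m_{(0)})\, x_{(1)} m_{(1)} = \bigl(\phi(x_{(0)}) x_{(1)}\bigr) \cdot \bigl(f(m_{(0)}) m_{(1)}\bigr)$, visibly an element of $H$ acting on an element of $\mathrm{cf}(M)$. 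Taking $X = H$ with comodule structure $\Delta_H$ and $\phi = \varepsilon_H$ then yields the element $h \cdot \bigl(f(m_{(0)}) m_{(1)}\bigr)$ for arbitrary $h \in H$, which gives the reverse inclusion and hence the equality $\mathrm{cf}(H \otimes M) = H \cdot \mathrm{cf}(M)$.

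With this lemma both implications are immediate. If $E$ is an $H$-module subcoalgebra and $M \in \mathcal{C}_E$, then for every $X \in \Mod^H$ we have $\mathrm{cf}(X \otimes M) \subseteq H \cdot \mathrm{cf}(M) \subseteq H \cdot E = E$, so $X \otimes M \in \mathcal{C}_E$; as $\mathcal{C}_E$ is already a closed subcategory, it is a closed $\Mod^H$-module subcategory. Conversely, if $\mathcal{N}$ is a closed $\Mod^H$-module subcategory and $E_{\mathcal{N}} = \sum_{M \in \mathcal{N}} \mathrm{cf}(M)$, then for each $M \in \mathcal{N}$ the object $H \otimes M$ again lies in $\mathcal{N}$ (since $H \in \Mod^H$), whence $H \cdot \mathrm{cf}(M) = \mathrm{cf}(H \otimes M) \subseteq E_{\mathcal{N}}$; summing over $M \in \mathcal{N}$ gives $H \cdot E_{\mathcal{N}} \subseteq E_{\mathcal{N}}$, so $E_{\mathcal{N}}$ is an $H$-submodule, hence an $H$-module subcoalgebra.

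The main obstacle I anticipate is purely bookkeeping around the coefficient-coalgebra formalism: verifying that the generators displayed above genuinely span $\mathrm{cf}(X \otimes M)$ (reducing general functionals on $X \otimes M$ to the decomposable ones $\phi \otimes f$ on the relevant finite-dimensional subspaces), and confirming that passing to the sum over all objects of $\mathcal{N}$ interacts correctly with the possibly infinite-dimensional comodule $H$ used in the converse direction. None of this is deep, but the inclusions must be established as genuine equalities of subspaces rather than equalities up to some closure, so the two stability statements have to be handled with care.
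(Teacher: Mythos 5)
Your proof is correct, but it takes a genuinely different route from the paper. The paper deduces the theorem from its equivariant Takeuchi equivalence: it identifies closed subcategories of $\Mod^C$ with subobjects of the identity functor in $\mathscr{L}(\Mod^C,\Mod^C)$, shows (Lemma~\ref{lem:nontrivial}) that the forgetful functor from $\mathscr{L}^{\lax}_{\Mod^H}(\Mod^C,\Mod^C)$ injects subobjects of $\id$ into those of the non-equivariant category, and then checks that $\tau=t_{\mathcal{N}}$ lifts to a lax $\Mod^H$-module subfunctor of the identity exactly when $\mathcal{N}$ is stable under the $\Mod^H$-action (the lax structure being the inclusion $X\otimes\tau(M)\subseteq\tau(X\otimes M)$, which need not be an equality when $H$ has no antipode). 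You instead bypass the functorial machinery entirely and work with coefficient coalgebras, with the identity $\mathrm{cf}(H\otimes M)=H\cdot\mathrm{cf}(M)$ (and the inclusion $\mathrm{cf}(X\otimes M)\subseteq H\cdot\mathrm{cf}(M)$ for general $X$) as the technical core; this is a clean and fully elementary argument resting only on \cite[Theorem 2.5.5]{MR1786197} and the standard fact that $\mathrm{cf}(M)$ is the smallest subcoalgebra through which the coaction factors. The computation reducing general functionals on $X\otimes M$ to decomposable ones is unproblematic because each coaction value lies in a finite-dimensional subspace. What your approach buys is independence from Theorem~\ref{thm:equivariant-EW} and an explicit formula for the coefficient coalgebra of a tensor product; what the paper's approach buys is that the statement becomes a direct corollary of the equivariant Eilenberg--Watts framework (the stated purpose of that section), and it makes visible where the lax-versus-strong distinction enters, as illustrated by the polynomial-bialgebra example following the proof.
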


To prove this theorem, we first recall how \cite[Theorem 2.5.5]{MR1786197} is established with emphasis on the use of Takeuchi's equivalence. We first remark:

\begin{lemma}
  Let $C$ and $D$ be coalgebras, and set $\mathcal{L} = \mathscr{L}(\Mod^C, \Mod^D)$. A morphism $i : F \to G$ in $\mathcal{L}$ is a monomorphism in $\mathcal{L}$ if and only if the component $i_M : F(M) \to G(M)$ is a monomorphism in $\Mod^C$ for all objects $M \in \Mod^C$.
\end{lemma}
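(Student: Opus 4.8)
The plan is to reduce the assertion to a property of the cotensor product by transporting it through Takeuchi's equivalence $\Phi_{C,D} : {}^C\Mod^D \to \mathcal{L}$ of Theorem~\ref{thm:EW-coalgebra}. Monomorphisms in the functor \emph{subcategory} $\mathcal{L}$ are \emph{a priori} delicate to analyze directly, but under $\Phi_{C,D}$ they become ordinary injections of bicomodules, after which the statement becomes transparent. Note that since $F, G : \Mod^C \to \Mod^D$, the components $i_M : F(M) \to G(M)$ live in $\Mod^D$.

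First I would dispose of the easy implication. If every component $i_M$ is a monomorphism in $\Mod^D$, then $i$ is a monomorphism in the category of all linear functors $\Mod^C \to \Mod^D$, because monomorphisms in a functor category are detected componentwise. As $\mathcal{L}$ is a full subcategory, a monomorphism in the ambient functor category between two objects of $\mathcal{L}$ is in particular a monomorphism in $\mathcal{L}$, so $i$ is monic in $\mathcal{L}$.

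For the converse I would invoke that $\Phi_{C,D}$ is an equivalence of categories, hence both preserves and reflects monomorphisms. Applying the quasi-inverse $F \mapsto F(C)$, a monomorphism $i : F \to G$ in $\mathcal{L}$ corresponds to the bicomodule map $i_C : F(C) \to G(C)$, which is therefore a monomorphism in the abelian category ${}^C\Mod^D$, that is, an injective linear map. Since $\Phi_{C,D}(i_C) \cong i$, each component $i_M$ agrees, up to the natural isomorphisms supplied by the equivalence, with $\id_M \coten_C i_C : M \coten_C F(C) \to M \coten_C G(C)$. Because the functor $M \coten_C (-)$ is left exact---indeed it is the kernel of a pair of maps and tensoring over a field is exact---it preserves monomorphisms, whence $i_M$ is a monomorphism in $\Mod^D$ for every $M \in \Mod^C$.

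The main obstacle is conceptual rather than computational: a morphism that is monic in the subcategory $\mathcal{L}$ need not obviously be monic in the surrounding functor category, since one may only cancel against natural transformations whose source again belongs to $\mathcal{L}$. The role of Takeuchi's equivalence is precisely to remove this gap by realizing $\mathcal{L}$ as the genuinely abelian category ${}^C\Mod^D$, in which monomorphisms are simply the injective maps; once injectivity of the single component $i_C$ is secured, propagating it to all components $i_M$ is a formality coming from left exactness of the cotensor functor.
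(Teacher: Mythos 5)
Your proposal is correct and follows essentially the same route as the paper: transport the monomorphism through Takeuchi's equivalence to a monomorphism $j = i_C$ in ${}^C\Mod^D$, then use left exactness of the cotensor product $M \coten_C (-)$ to conclude that each component $M \coten_C j$ is monic, with the converse being the standard componentwise detection of monomorphisms in a full subcategory of a functor category. Your added remark that the components actually land in $\Mod^D$ (not $\Mod^C$ as the statement reads) correctly flags a typo in the lemma as stated.
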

\begin{proof}
  In view of Takeuchi's equivalence $\mathcal{L} \approx {}^C\Mod^D$, we may assume that $F = (-) \coten_C X$ and $G = (-) \coten_C Y$ for some $X, Y \in {}^C\Mod^D$ and $i$ is induced by a morphism $j : X \to Y$ in ${}^C\Mod^D$. We note that $i$ is a monomorphism in $\mathcal{L}$ if and only if $j$ is a monomorphism in ${}^C\Mod^D$.

  Suppose that $i$ is a monomorphism in $\mathcal{L}$. Then, since the cotensor product preserves kernels, $i_M = M \coten_C j : M \coten_C X \to M \coten_C Y$ is monic. Thus we have proved the `only if' part. The `if' part is clear. The proof is done.
\end{proof}

Given a subcoalgebra $D$ of $C$, we can identify $\Mod^D$ as a full subcategory
\begin{equation*}
  \Mod^D = \{ M \in \Mod^C \mid \coactr_M(M) \subset M \otimes D \},
\end{equation*}
which is closed. For the converse correspondence, we define
\begin{equation}
  \label{eq:tNM}
  t_{\mathcal{N}}(M) = (\text{the sum of all subcomodules of $M$ belonging to $\mathcal{N}$})
\end{equation}
for $M \in \Mod^C$ and a closed subcategory $\mathcal{N}$ of $\Mod^C$. Since $\mathcal{N}$ is closed under quotient objects and direct sums, $t_{\mathcal{N}}(M)$ belongs to $\mathcal{N}$. For a morphism $f: M \to M'$ in $\Mod^C$, one can define $t_{\mathcal{N}}(f) : t_{\mathcal{N}}(M) \to t_{\mathcal{N}}(M')$ to be the restriction of $f$ again since $\mathcal{N}$ is closed under quotients. Hence we have a functor $t_{\mathcal{N}} : \Mod^C \to \Mod^C$, whose image is equal to $\mathcal{N}$. It is obvious that $t_{\mathcal{N}}$ preserves direct sums. Moreover, since $\mathcal{N}$ is closed under subobjects, one can show that $t_{\mathcal{N}}$ preserves kernels. The argument so far shows that $t_{\mathcal{N}}$ is a subobject of the identity functor in the category $\mathscr{L}(\Mod^C, \Mod^C)$. By Takeuchi's equivalence, those subobjects are in bijection with subobjects of $C$ in ${}^C\Mod^C$, that is, subcoalgebras of $C$. The subcoalgebra corresponding to $t_{\mathcal{N}}$ is what we wanted.

\newcommand{\Sub}{\mathrm{Sub}}

From now on, we fix a bialgebra $H$ and a left $H$-comodule coalgebra $C$.
Theorem~\ref{thm:closed-module-subcats} will be proved by using the equivariant version of Takeuchi's equivalence.
Given an object $X$ of a category $\mathcal{A}$, we denote by $\Sub_{\mathcal{A}}(X)$ the class of subobjects of $X$. The set of $H$-module subcoalgebras of $C$ is nothing but $\Sub_{{}^C_H\Mod^C}(C)$. We write $\mathcal{L} = \mathscr{L}(\Mod^C, \Mod^C)$ and ${}_H\mathcal{L} = \mathscr{L}_{\Mod^H}^{\lax}(\Mod^C, \Mod^C)$. We note\footnote{Lemma~\ref{lem:nontrivial} is not trivial. Indeed, the same does not hold for the forgetful functor from the category of comodules over a coalgebra over a commutative ring $k$ to the category of $k$-modules \cite[Example 13]{W75}.}:

\begin{lemma}
  \label{lem:nontrivial}
  Let $F$ be an object of ${}_H\mathcal{L}$. Then the forgetful functor ${}_H\mathcal{L} \to \mathcal{L}$ induces an injective map from $\Sub_{{}_H\mathcal{L}}(F)$ to $\Sub_{\mathcal{L}}(F)$.
\end{lemma}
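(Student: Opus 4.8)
The plan is to transport the statement, via the two Eilenberg--Watts equivalences already at our disposal, to a concrete assertion about bicomodules, where subobjects become genuine substructures. First I would invoke Takeuchi's equivalence (Theorem~\ref{thm:EW-coalgebra}) to identify $\mathcal{L} = \mathscr{L}(\Mod^C,\Mod^C)$ with ${}^C\Mod^C$ through $M \mapsto (-)\coten_C M$, and the equivariant version (Theorem~\ref{thm:equivariant-EW}) to identify ${}_H\mathcal{L} = \mathscr{L}_{\Mod^H}^{\lax}(\Mod^C,\Mod^C)$ with ${}^C_H\Mod^C$ through the same assignment equipped with the lax structure $\xi^{\bullet}$. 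Since the forgetful functor ${}_H\mathcal{L} \to \mathcal{L}$ merely discards the lax $\Mod^H$-module structure, it corresponds under these equivalences to the functor $U : {}^C_H\Mod^C \to {}^C\Mod^C$ forgetting the $H$-action: the square formed by the two equivalences and the two forgetful functors commutes up to natural isomorphism, because $\xi^{\bullet}$ is exactly the extra datum being dropped. As equivalences induce bijections on subobject classes and intertwine the forgetful maps, it suffices to prove that $U$ induces an injective map $\Sub_{{}^C_H\Mod^C}(M) \to \Sub_{{}^C\Mod^C}(M_0)$, where $M \in {}^C_H\Mod^C$ corresponds to $F$ and $M_0$ is its underlying bicomodule.

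Next I would make the subobjects concrete. Over the field $\bfk$ both ${}^C_H\Mod^C$ and ${}^C\Mod^C$ are abelian categories whose forgetful functors to $\Vect$ are exact and faithful, so in each a monomorphism is precisely an injective morphism, and a subobject is represented by an honest substructure: an $H$-stable sub-bicomodule of $M$ in the first case, a sub-bicomodule of $M_0$ in the second. The functor $U$ is itself exact and faithful (kernels and cokernels in both categories are computed on underlying vector spaces), hence it preserves monomorphisms, so the induced map on subobjects is well defined and sends an $H$-stable sub-bicomodule $N \subseteq M$ to the same subspace regarded merely as a sub-bicomodule of $M_0$. Injectivity is then immediate: such an $N$ carries its $H$-action and its two $C$-coactions by restriction from $M$, so it is completely determined, as an object of ${}^C_H\Mod^C$, by its underlying subspace; consequently two $H$-stable sub-bicomodules with equal image in $M_0$ coincide as subspaces, hence as objects, hence as subobjects.

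The main obstacle is not the formal reduction but the recognition that every step above genuinely uses the field hypothesis. It is the flatness of $\bfk$-vector spaces that makes the comodule categories abelian, forces monomorphisms to be injective, and renders both the ambient forgetful functors to $\Vect$ and the functor $U$ exact -- exactly the properties that fail for comodules over a general commutative ring, and exactly why the analogous statement is false there (cf.\ the footnote). So the delicate point to keep in view throughout is that the dictionary ``subobjects $=$ concrete substructures'' together with the exactness of $U$ are real inputs secured by working over $\bfk$, rather than formalities; once they are in place the injectivity itself is essentially tautological.
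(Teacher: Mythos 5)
Your proposal is correct and follows essentially the same route as the paper: the paper's proof likewise transports the statement through the (equivariant) Takeuchi equivalences to the forgetful functor ${}^C_H\Mod^C \to {}^C\Mod^C$ and then declares the injectivity ``easy from the bicomodule side.'' You have simply spelled out that easy part (subobjects as honest substructures over a field, and the restricted $H$-action being determined by the underlying subspace), which is a faithful elaboration of the intended argument.
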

\begin{proof}
  ${}_H\mathcal{L}$, $\mathcal{L}$ and the forgetful functor ${}_H\mathcal{L} \to \mathcal{L}$ correspond to ${}^C_H\Mod^C$, ${}^C\Mod^C$ and the functor forgetting the left $H$-action, respectively, via the equivariant version of Takeuchi's equivalence.
  The proof is easy from the bicomodule side.
\end{proof}

We now identify $\Sub_{{}_H\mathcal{L}}(\id)$ as a subset of $\Sub_{\mathcal{L}}(\id)$ by this lemma. A subobject of $C$ in ${}^C_H\Mod^C$ is nothing but an $H$-module subcoalgebra of $C$, and thus they are in bijection with $\Sub_{{}_H\mathcal{L}}(\id)$. Thus, to prove Theorem~\ref{thm:closed-module-subcats}, it is sufficient to verify that $\tau \in \Sub_{\mathcal{L}}(\id)$ belongs to $\Sub_{{}_H\mathcal{L}}(\id)$ if and only if the closed subcategory of $\Mod^C$ corresponding to $\tau$ is closed under the action of $\Mod^H$.

\begin{proof}[Proof of Theorem \ref{thm:closed-module-subcats}]
  We choose an element $\tau$ of $\Sub_{\mathcal{L}}(\id)$ and let $D$ be the subcoalgebra of $C$ corresponding to $\tau$.
  We suppose that $\tau$ belongs to $\Sub_{{}_H\mathcal{L}}(\id)$. Then, by the equivariant version of Takeuchi's equivalence, $D$ is an $H$-module subcoalgebra of $C$. Thus, for $m \in M \in \Mod^D$ and $x \in X \in \Mod^H$, we have
  \begin{equation*}
    \coactr_{X \otimes M}(x \otimes m) = x_{(0)} \otimes m_{(0)} \otimes x_{(1)} m_{(1)}
    \in (X \otimes M) \otimes D,
  \end{equation*}
  which means $X \otimes M \in \Mod^D$. Namely, $\Mod^D$ is closed under the action of $\Mod^H$.

  We now suppose that $\Mod^D$ is closed under the action of $\Mod^H$. We recall that $\tau = t_{\mathcal{N}}$ is given from $\Mod^D$ by the formula \eqref{eq:tNM} with $\mathcal{N} = \Mod^D$. For $X \in \Mod^H$ and $M \in \Mod^C$, we have $X \otimes \tau(M) \subset X \otimes M$ by definition. Since $\tau(M)$ belongs to $\mathcal{N}$, the left hand side is contained in $\tau(X \otimes M)$. Thus we have an inclusion map
  \begin{equation}
    \label{eq:0}
    X \otimes \tau(M) \to \tau(X \otimes M)
  \end{equation}
  for $X \in \Mod^H$ and $M \in \Mod^C$. It is obvious that this makes $\tau$ a subobject of $\id$ in the category ${}_H\mathcal{L}$.
\end{proof}

\begin{remark}
  By Theorem~\ref{thm:equivalence-iota}, the inclusion morphism \eqref{eq:0} is always bijective if $H$ is a Hopf algera. However, in general, it is not surjective. An example is given as follows: We take $H = \bfk[x]$ to be the polynomial algebra. Then $H$ is a bialgebra by the comultiplication determined by $\Delta(x) = x \otimes x$. We also take $C = H$ (as a coalgebra), $D = \bigoplus_{j > 0} \bfk x^j$ (which is a subcoalgebra of $C$) and $\mathcal{N} = \Mod^D$. Then $\tau = t_{\mathcal{N}}$ is not a strong $\Mod^H$-module functor. Indeed, for $M = \bfk 1_H$ and $X = \bfk x$, we have $X \otimes \tau(M) = 0$ and $\tau(X \otimes M) = \bfk x \ne 0$.
\end{remark}

\def\cprime{$'$}


\begin{thebibliography}{EGNO15}

\bibitem[AM07]{MR2331768}
Nicol{\'a}s Andruskiewitsch and Juan~Mart{\'{\i}}n Mombelli.
\newblock On module categories over finite-dimensional {H}opf algebras.
\newblock {\em J. Algebra}, 314(1):383--418, 2007.

\bibitem[AP12]{MR3004083}
Alessandro Ardizzoni and Alice Pavarin.
\newblock Preantipodes for dual quasi-bialgebras.
\newblock {\em Israel J. Math.}, 192(1):281--295, 2012.

\bibitem[AV25]{AV}
  Ryan Aziz, Joost Vercruysse.
  Generalized Yetter-Drinfeld modules, the center of bi-actegories and groupoid-crossed braided bicategories
  {\tt arXiv:2507.08722}

\bibitem[CMZ97]{MR1469112}
S.~Caenepeel, G.~Militaru, and Shenglin Zhu.
\newblock Crossed modules and {D}oi-{H}opf modules.
\newblock {\em Israel J. Math.}, 100:221--247, 1997.

\bibitem[DNR01]{MR1786197}
Sorin D{\u{a}}sc{\u{a}}lescu, Constantin N{\u{a}}st{\u{a}}sescu, and
  {\c{S}}erban Raianu.
\newblock {\em Hopf algebras}, volume 235 of {\em Monographs and Textbooks in
  Pure and Applied Mathematics}.
\newblock Marcel Dekker Inc., New York, 2001.
\newblock An introduction.

\bibitem[DSPS19]{MR3934626}
Christopher~L. Douglas, Christopher Schommer-Pries, and Noah Snyder.
\newblock The balanced tensor product of module categories.
\newblock {\em Kyoto J. Math.}, 59(1):167--179, 2019.

\bibitem[EGNO15]{MR3242743}
Pavel Etingof, Shlomo Gelaki, Dmitri Nikshych, and Victor Ostrik.
\newblock {\em Tensor categories}, volume 205 of {\em Mathematical Surveys and
  Monographs}.
\newblock American Mathematical Society, Providence, RI, 2015.

\bibitem[GP97]{MR1466618}
R.~Gordon and A.~J. Power.
\newblock Enrichment through variation.
\newblock {\em J. Pure Appl. Algebra}, 120(2):167--185, 1997.

\bibitem[HS20]{MR4164719}
Istv\'an Heckenberger and Hans-J\"urgen Schneider. \newblock {\em Hopf algebras and root systems}, volume 247 of {\em Mathematical Surveys and Monographs}. \newblock American Mathematical Society, Providence, RI, 2020.

\bibitem[ML98]{MR1712872}
Saunders Mac~Lane.
\newblock {\em Categories for the working mathematician}, volume~5 of {\em
  Graduate Texts in Mathematics}.
\newblock Springer-Verlag, New York, second edition, 1998.

\bibitem[NSS25]{NSS}
  Daisuke Nakamura, Taiki Shibata and Kenichi Shimizu.
  Exact module categories over $\mathrm{Rep}(u_q(\mathfrak{sl}_2))$.
  {\tt arXiv:2503.21265}

\bibitem[Par77]{MR0498792}
B.~Pareigis.
\newblock Non-additive ring and module theory. {II}.
  {$\mathcal{C}$}-categories, {$\mathcal{C}$}-functors and
  {$\mathcal{C}$}-morphisms.
\newblock {\em Publ. Math. Debrecen}, 24(3-4):351--361, 1977.

\bibitem[Sch04]{MR2037710}
Peter Schauenburg.
\newblock Two characterizations of finite quasi-{H}opf algebras.
\newblock {\em J. Algebra}, 273(2):538--550, 2004.

\bibitem[Tak77]{MR472967}
Mitsuhiro Takeuchi.
\newblock Morita theorems for categories of comodules.
\newblock {\em J. Fac. Sci. Univ. Tokyo Sect. IA Math.}, 24(3):629--644, 1977.

\bibitem[W75]{W75}
  Manfred B. Wischnewsky.
  On linear representations of affine groups. I.
  Pacific J. Math. 61 (1975), no. 2, 551--572.
\end{thebibliography}
\end{document}